\numberwithin{equation}{section}
\newcounter{count}
\newcommand{\num}{\stepcounter{count}\the\value{count}}
\newcommand{\ichi}{ \mbox{\textup{1}}\hspace{-0.26em}\mbox{\textup{l}}} 
\newtheorem{theorem}{Theorem}[section]
\newtheorem{lemma}[theorem]{Lemma}
\newtheorem{corollary}[theorem]{Corollary}
\newtheorem{proposition}[theorem]{Proposition}
\theoremstyle{definition}
\newtheorem{remark}[theorem]{Remark}
\newtheorem{notation}[theorem]{Notation}
\def\@seccntformat#1{\csname the#1\endcsname.\quad}
\title[ Normality and the Riemann zeta function]{Normality of algebraic numbers and \\the Riemann zeta function}
\keywords{normality, uniform distribution, Riemann zeta function, Perron's formula, functional equation, exponential integral, Ridout's theorem}
\subjclass[2020]{11K16, 11M06}
\author[Y. Kanado]{Yuya Kanado}
\address{Yuya Kanado\\
	Graduate School of Mathematics\\ Nagoya University\\ Furo-cho\\ Chikusa-ku\\ Nagoya\\ 464-8602\\ Japan}
\email{m21017a@math.nagoya-u.ac.jp}
\author[K. Saito]{Kota Saito}
\address{Kota Saito\\Faculty of Pure and Applied Sciences\\ University of Tsukuba\\ 1-1-1 Tennodai\\ Tsukuba\\ Ibaraki\\ 305-8577\\ Japan}
\email{saito.kota.gn@u.tsukuba.ac.jp}
\begin{document}

\begin{abstract}
A real number is called simply normal to base $b$ if every digit $0,1,\ldots ,b-1$ should appear in its $b$-adic expansion with the same frequency $1/b$. A real number is called normal to base $b$ if it is simply normal to every base $b, b^2, \ldots$. In this article, we discover a relation between the normality of algebraic numbers and a mean of the Riemann zeta function on vertical arithmetic progressions. Consequently, we reveal that a positive algebraic irrational number $\alpha$ is normal to base $b$ if and only if we have
\[
\lim_{N\to \infty}\frac{1}{\log N} \sum_{1\leq |n|\leq N}  \zeta\left(-k+\frac{2\pi i n}{\log b} \right) \frac{e^{2\pi i n \log \alpha /\log b}}{n^{k+1}} =0
\] 
for every integer $k\geq 0$.
\end{abstract}

\maketitle

\section{Introduction}

Let $x$ be a positive irrational number, and let $b$ be an integer not less than $2$. Let $x= \sum_{i=-m}^\infty c_i b^{-i}$ be the $b$-adic expansion of $x$, and hence each $c_i$ belongs to $\{0,1,\ldots, b-1\}$. For all $l\in \mathbb{Z}_{\ge 0}$ and $a\in \{0,\ldots, b-1\}$, we define 
\[
A_b(l; a,x)= \#\{i \in [0,l] \colon c_i=a  \}.   
\]
We say that $x$ is \textit{simply normal} to base $b$ if for all $a\in \{0,1,\ldots, b-1\}$, 
\[
\lim_{l\to \infty} A_b(l; a,x)/l= 1/b. 
\]
Further, we say that $x$ is \textit{normal} to base $b$ if $x$ is simply normal to base $b^k$ for every $k\in \mathbb{Z}_{>0}$. Borel introduced the notions of simple normality and normality in \cite{Borel1909}\footnote{Our definition of ``normal" is different from the original one, but these are equivalent (See \cite[Definition~4.1, Theorem~4.2]{Bugeaud2012}). 
}, and in the same article, he showed that almost all real numbers are normal to any base. A simple example was given by Champernowne \cite{Champernowne}. He proved that $0.1\:2\:3\:4\:5\:6\:7\:8\:9\:10\:11\:12\cdots$ is normal to base $10$. Borel \cite{Borel1950} conjectured that all real algebraic irrational numbers are normal in any base at the same time. However, we do not get any examples of normal algebraic numbers. It is very difficult to determine whether a given non-artificial number is normal or not. Actually,  Bailey, Borwein, Crandall, and Pomerance \cite{BBCP} presented the best-known lower bounds for $A_2(l; 1 ,|\alpha| )$ up to constants. They revealed that for every algebraic irrational number $\alpha$ of degree $D$ there exists a constant $C=C(\alpha)>0$,
\[
A_2 (l; 1 ,|\alpha| )\geq C N^{1/D}
\]
for sufficiently large $N$. Thus, our knowledge does not leach to verify the simple normality of $\alpha$ to base $2$.

In recent research, the authors \cite{SK} discovered a strong relation between the simple normality of $2^{p/q}$ and the Riemann zeta function. In \cite[Theorem~2.1]{SK}, they  demonstrated that for all co-prime integers $p$ and $q$ with $1\leq p<q$ and for all $b\in \mathbb{Z}_{\ge 2}$ which is not a $q$-th power of an integer,  
\begin{equation}\label{eq:previous-result}
\sum_{0\leq d\leq l} \{b^{d+p/q} \}  =\frac{l}{2}-\frac{1}{2\pi i}\sum_{1\leq |n|\leq b^l}  \zeta\left(\frac{2\pi i n}{\log b} \right) \frac{e^{2\pi i n p /q}}{n} +o_{p,q,b}(l) \quad  \text{as }l\to \infty,
\end{equation}
where $\zeta(s)$ denotes the Riemann zeta function and $\{x\}$ denotes the fractional part of $x$. Furthermore, by substituting $b=2$, they \cite[Theorem~1.1]{SK} concluded that $2^{p/q}$ is simply normal to base $2$ if and only if the sum on the right-hand side of \eqref{eq:previous-result} has a magnitude of $o(l)$. In this article, by using the periodic Bernoulli polynomials, we successfully extend our result from the simple normality of $2^{p/q}$ to base $2$ to the normality of a general algebraic number to any base.  To assert the main result, we shall introduce the periodic Bernoulli polynomials.  

Let $\mathbb{Q}[x]$ be the set of all polynomials with rational coefficients. We define the Bernoulli polynomials $B_0(x), B_1(x),\ldots \in \mathbb{Q}[x]$ by the coefficients of the Maclaurin expansion of the following generating function:
\[
\frac{te^{xt}}{e^{t}-1} = \sum_{k=0}^\infty \frac{B_k(x)}{k!} t^k. 
\]
For example, $B_0(x)=1$, $B_1(x)=x-1/2$, $B_2(x)=x^2-x+1/6$. For every $k\in \mathbb{Z}_{\ge 0}\setminus \{1\}$, we define $\psi_k(x)= B_k(\{x\})$, and $\psi_k(x)$ is called the \textit{$k$-th periodic Bernoulli polynomial}. For $k=1$, we set
\begin{equation*}
\psi_1(x)=\left\{
\begin{array}{cl}
\{x\}-1/2 & \text{if } x\notin \mathbb{Z}, \\
0 & \text{otherwise.}
\end{array} \right.
\end{equation*}
We refer \cite{ArakawaIbukiyamaKaneko} to the readers who want to know more details. 

\begin{theorem}{\label{Theorem-fractional-powers}}Let $\alpha$ be a positive algebraic irrational number, and $b\in \mathbb{Z}_{\ge 2}$. For every $k\in \mathbb{Z}_{\ge 0}$, we have
\begin{align*}
&\frac{(k+1)!}{(2\pi i)^{k+1}}\sum_{1\leq |n|\leq N} \zeta\left(-k+\frac{2\pi in}{\log b}\right) \frac{e^{2\pi i n \log \alpha /\log b  }}{n^{k+1}}\\
&= -\frac{1}{\log^k b}  \sum_{0<h<\frac{\log N}{\log b}} \psi_{k+1}( b^{h}\alpha  )+o_{k,b,\alpha}(\log N ) \quad \text{as } N\to\infty.
\end{align*}
\end{theorem}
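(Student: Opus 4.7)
I would derive the theorem by establishing a Perron-type integral representation of $\psi_{k+1}$, applying it at $y = b^h \alpha$, summing over $h = 1, \ldots, H$ with $H = \lfloor \log N/\log b \rfloor$, and extracting the $\zeta$-sum on the right-hand side by a residue calculation on a suitable truncated contour. The first step is to prove by induction on $k$ that, for every $y > 0$ that is not a positive integer and every $c \in (-1, 0)$,
\[
\psi_{k+1}(y) = -\frac{(k+1)!}{2\pi i}\int_{(c)} \zeta(s - k)\, \frac{y^s}{s(s-1)\cdots(s-k)}\, ds.
\]
The base case $k = 0$ is the classical Perron formula for $\{y\} - 1/2$. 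The inductive step uses the identity $\psi_{k+1}'(y) = (k+1)\,\psi_k(y)$ (valid off the integers), integration in $y$, the change of variable $s \mapsto s+1$, and a contour shift that absorbs the arising constant $B_{k+1}$.

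Plugging in $y = b^h\alpha$, summing, and using $\sum_{h=1}^H b^{sh} = (b^{s(H+1)} - b^s)/(b^s - 1)$ turns the left side of the target identity into two contour integrals along $\Re s = c$; call them $I_A$ (the piece with $b^{s(H+1)}$) and $I_B$ (the piece with $b^s$). Truncate both integrals to $|\Im s| \leq T := 2\pi N/\log b$, with truncation error absorbed into $o(\log N)$ via the convexity bound for $\zeta$. Push the $I_A$-contour leftward to $\Re s = c_- \ll 0$: no poles lie in $c_- < \Re s < 0$, and $|b^{s(H+1)}| = N^{c_-} b^{c_-}$ forces $I_A = O(N^{c_-}) = o(1)$. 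Push the $I_B$-contour rightward to $\Re s = c_+ \in (0,1)$; this crosses the double pole at $s = 0$ (from $1/s$ together with the simple pole of $1/(b^s - 1)$), whose residue is an $N$-independent constant involving $\zeta(-k), \zeta'(-k), \log\alpha$, and harmonic numbers, and the simple poles $s_n = 2\pi i n/\log b$ of $1/(b^s - 1)$ for $1 \leq |n| \leq N$ inside the truncation rectangle. Each such $s_n$-residue equals
\[
\frac{\zeta(-k + 2\pi in/\log b)\, e^{2\pi in\log\alpha/\log b}}{\log b\,\prod_{j=0}^k (2\pi in/\log b - j)}.
\]
The shifted integral $I_B^{(c_+)}$ is $O(1)$ by the convexity bound combined with the decay $1/|s(s-1)\cdots(s-k)| \ll |t|^{-(k+1)}$.

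Using the asymptotic $\prod_{j=0}^k (2\pi i n/\log b - j) = (2\pi i n/\log b)^{k+1}(1 + O(1/|n|))$, the sum of the $s_n$-residues over $1 \leq |n| \leq N$ equals
\[
\frac{(\log b)^k}{(2\pi i)^{k+1}} \sum_{1 \leq |n| \leq N} \frac{\zeta(-k + 2\pi in/\log b)\, e^{2\pi in\log\alpha/\log b}}{n^{k+1}}
\]
plus a remainder that, after polynomial bounds for $\zeta$ on $\Re s = -k$, is $o(\log N)$. Assembling the pieces and tracking the signs from Cauchy's theorem, one obtains
\[
\sum_{h=1}^H \psi_{k+1}(b^h \alpha) = -\frac{(\log b)^k (k+1)!}{(2\pi i)^{k+1}}\sum_{1 \leq |n| \leq N} \zeta(-k + 2\pi in/\log b)\,\frac{e^{2\pi in\log\alpha/\log b}}{n^{k+1}} + o(\log N),
\]
which, after dividing by $-\log^k b$, is exactly the statement of the theorem.

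The principal obstacle lies in the error analysis: one must show that the horizontal connecting integrals in the Cauchy rectangles vanish as $T \to \infty$, and that the $O(1/|n|)$ deviations in the residue asymptotics, combined with the truncation error at $|t| = T$, contribute only $o(\log N)$. This requires uniform upper bounds for $\zeta(-k + it)$ on the relevant vertical line obtained via the convexity principle and the functional equation. The case $k = 0$ is the most delicate, since the absolute convexity bound is only just sufficient and one may need to appeal to cancellation from the oscillating factor $e^{2\pi in\log\alpha/\log b}$. The algebraic irrationality of $\alpha$ is not used in the identity itself but enters the subsequent normality characterization (via Ridout's theorem) to control the distance between $b^h\alpha$ and $\mathbb{Z}$.
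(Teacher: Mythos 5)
Your proposal takes a genuinely different route from the paper's: you propose a Perron/Mellin contour-integral representation of $\psi_{k+1}$, apply it at $y=b^h\alpha$, sum over $h$, and recover the $\zeta$-sum as residues at the poles $s_n=2\pi in/\log b$ of $1/(b^s-1)$. The paper instead applies the functional equation to rewrite $\zeta(-k+2\pi idn)$, expands $\zeta(1+k-2\pi idn)$ as a Dirichlet series, passes through the Euler--Maclaurin formula, and evaluates the resulting exponential sums by the first/second derivative tests and Huxley's stationary-phase theorem. The residue bookkeeping in your outline does reproduce the correct constants, so the formal skeleton is right. The problem is that the error analysis does not go through as you describe it.

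The critical gap is the estimate $I_A=O(N^{c_-})=o(1)$ after shifting to $\Re s=c_-<0$, $|\Im s|\le T\asymp N$. On that vertical line the functional equation gives $|\zeta(c_- -k+it)|\asymp |t|^{1/2-c_-+k}$, and combined with $\bigl|\prod_{j=0}^k(s-j)\bigr|^{-1}\asymp|t|^{-(k+1)}$, $|b^{s(H+1)}|\asymp N^{c_-}$, and $|b^s-1|\gg 1$ the integrand is $\asymp N^{c_-}|t|^{-1/2-c_-}$. Since $-1/2-c_->-1$ for every $c_-<1/2$, the vertical integral is $\asymp N^{c_-}T^{1/2-c_-}\asymp N^{1/2}$, not $O(N^{c_-})$: you have bounded the integrand but ignored the length of the contour. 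The same exponent $|t|^{-1/2-c}$ also means the truncation tail $\int_{|t|>T}$ of $I_B$ on $\Re s=c\in(-1,0)$ does not converge absolutely, so invoking "the convexity bound" cannot yield an error of $o(\log N)$. You acknowledge this difficulty for $k=0$, but it is present for every $k$, because the growth of $\zeta(s-k)$ exactly cancels the extra decay from $1/\prod(s-j)$. To fix either estimate one must exploit the oscillation of the integrand, and at that point one is essentially redoing the stationary-phase analysis that occupies Sections 4--8 of the paper.

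A smaller but real misstatement: you say the algebraic irrationality of $\alpha$ "is not used in the identity itself" and enters only the normality corollary. For $k=0$ this is false. The paper's Theorem~\ref{Theorem-Bernoulli0} produces an extra term $G_N$ controlled by $\sum_h\min\bigl(1,\,1/(\|b^h\alpha\|\cdot N e^{-h/d})\bigr)$; showing $G_N=o(\log N)$ to obtain Theorem~\ref{Theorem-fractional-powers} with $k=0$ uses Ridout's theorem, hence the algebraicity of $\alpha$. In your contour-integral version the analogous obstruction arises from the truncated Perron/Fourier kernel near $y=b^h\alpha$ close to an integer, and would need the same Diophantine input.
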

We can reproduce \eqref{eq:previous-result} from  Theorem~\ref{Theorem-fractional-powers} with $k=0$, $\alpha=b^{p/q}$, and $N=b^l$. Thus, Theorem~\ref{Theorem-fractional-powers} is an extension of \eqref{eq:previous-result}. We will complete the proof of the theorem in Section~\ref{GenerelResults}. This is a goal of this article since we obtain the following equivalence.

\begin{corollary}\label{corollary:normal}
Let $\alpha$ be a positive algebraic irrational number. Let $b\geq 2$ be an integer. Then $\alpha$ is normal to base $b$ if and only if for every integer $k\geq 0$,
\begin{align*}
\lim_{N\to \infty}\frac{1}{\log N} \sum_{1\leq |n|\leq N} \zeta\left(-k+\frac{2\pi in}{\log b}\right) \frac{e^{2\pi i n \log \alpha /\log b  }}{n^{k+1}}= 0.
\end{align*}
\end{corollary}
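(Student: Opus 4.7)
The plan is to apply Theorem~\ref{Theorem-fractional-powers} to convert the hypothesis into an arithmetic statement about Ces\`aro averages of $\psi_{k+1}(b^{h}\alpha)$, and then combine Wall's theorem with Weyl's criterion to conclude. Concretely, dividing both sides of the asymptotic in Theorem~\ref{Theorem-fractional-powers} by $\log N$ and using that the factors $(k+1)!/(2\pi i)^{k+1}$ and $1/\log^{k}b$ are nonzero constants while the error is $o_{k,b,\alpha}(\log N)$, the condition
\[
\lim_{N\to\infty}\frac{1}{\log N}\sum_{1\leq |n|\leq N}\zeta\!\left(-k+\frac{2\pi in}{\log b}\right)\frac{e^{2\pi i n\log\alpha/\log b}}{n^{k+1}}=0
\]
is equivalent to
\[
\lim_{L\to\infty}\frac{1}{L}\sum_{h=1}^{L}\psi_{k+1}(b^{h}\alpha)=0 \qquad \text{for every }k\geq 0, \qquad (\ast)
\]
after the natural substitution $L=\lfloor\log N/\log b\rfloor$ (so that $\log N \asymp L$). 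Thus the corollary reduces to showing that $\alpha$ is normal to base $b$ if and only if $(\ast)$ holds.

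By Wall's theorem (see, e.g., \cite[Theorem~4.14]{Bugeaud2012}), $\alpha$ is normal to base $b$ if and only if the sequence $(b^{h}\alpha)_{h\geq 1}$ is uniformly distributed modulo~$1$, so it suffices to establish the equivalence of $(\ast)$ with uniform distribution mod~$1$. The direction ``u.d.\ $\Rightarrow(\ast)$'' follows from Weyl's equidistribution theorem (or Koksma's inequality) applied to the bounded, Riemann-integrable function $\psi_{k+1}$, together with $\int_{0}^{1}B_{k+1}(x)\,dx=0$ for $k\geq 0$; since $\alpha$ is irrational, $b^{h}\alpha\notin\mathbb{Z}$, so the ambiguity in the definition of $\psi_{1}$ at integers is immaterial.

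For the converse, I would use that $\{B_{k}\}_{k\geq 0}$ is a basis of $\mathbb{R}[x]$ (since $\deg B_{k}=k$ with leading coefficient $1$), so $(\ast)$, combined with the trivial identity for $B_{0}\equiv 1$, extends by linearity to
\[
\lim_{L\to\infty}\frac{1}{L}\sum_{h=1}^{L}P(\{b^{h}\alpha\})=\int_{0}^{1}P(x)\,dx
\]
for every polynomial $P\in\mathbb{R}[x]$. By the Weierstrass approximation theorem this extends to every continuous $f:[0,1]\to\mathbb{R}$, and a standard sandwich argument with continuous approximations of indicator functions of subintervals of $[0,1)$ yields uniform distribution of $(b^{h}\alpha)$ modulo~$1$.

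The main obstacle I anticipate is the bookkeeping in the first step, namely verifying cleanly that the equivalence between the original spectral condition and $(\ast)$ survives the various constant factors, the passage from the half-open sum $0<h<\log N/\log b$ to $1\leq h\leq L$, and the discretization of $\log N$; the remaining pieces (Wall's theorem, Weyl's criterion, and Weierstrass approximation) are textbook.
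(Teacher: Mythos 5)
Your argument is correct and follows essentially the same route as the paper: reduce via Theorem~\ref{Theorem-fractional-powers} to the Ces\`aro condition $\frac{1}{L}\sum_{h\leq L}\psi_{k+1}(b^h\alpha)\to 0$, invoke Wall's theorem (the paper's \cite[Theorem~4.14]{Bugeaud2012}) to pass to uniform distribution of $(b^h\alpha)$, and close the loop with Weierstrass approximation and the fact that the periodic Bernoulli polynomials of all degrees span $\mathbb{R}[x]$ on $[0,1)$ with $\int_0^1\psi_{k+1}=0$. The only cosmetic difference is that you split the equivalence into two implications (citing Weyl/Koksma for one direction) while the paper presents it as a single chain of ``if and only if''s through the Kuipers--Niederreiter criterion, but the underlying ideas and lemmas are the same.
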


\begin{proof}
Let $x_h = \alpha b^h$ for every $h\in \mathbb{Z}_{>0}$. By \cite[Theorem~4.14]{Bugeaud2012}, $\alpha$ is normal to base $b$ if and only if $(x_h)_{h=1}^\infty$ is uniformly distributed modulo $1$. Further, it is equivalent to get
\begin{equation}\label{eq:uniform-dist}
\lim_{H\to \infty } \frac{1}{H}\sum_{h=1}^H f(\{x_h\}) = \int_{0}^1 f(x) dx   
\end{equation}
for every real continuous function $f$ on $[0,1]$ (See \cite[Theorem~1.1]{KN}). By the Weierstrass approximation theorem,  $\alpha$ is normal to base $b$ if and only if \eqref{eq:uniform-dist} holds for all polynomials on $[0,1]$ with real coefficients. These polynomials can be represented as a finite linear combination of $\{\psi_0(x),\psi_1(x),\psi_2(x),\ldots\}$ over $\mathbb{R}$ since each $\psi_k$ is a polynomial of degree $k$.  By the property of $B_{k+1}(x)$ \cite[p.55]{ArakawaIbukiyamaKaneko}, we have $\int_{0}^1 \psi_{k+1}(x) dx=0$ for every $k\in \mathbb{Z}_{\ge 0}$. Therefore,  $\alpha$ is normal to base $b$ if and only if 
\[
\lim_{H\to \infty } \frac{1}{H}\sum_{h=1}^H \psi_{k+1}(x_h) =  0   
\]
 for all $k\in \mathbb{Z}_{\ge 0}$. By this equivalence and Theorem~\ref{Theorem-fractional-powers}, we obtain Corollary~\ref{corollary:normal}.
\end{proof}

Let $p$ and $q$ be co-prime integers, and let $b\in \mathbb{Z}_{\ge 2}$ which is not a $q$-th power of an integer. By substituting $\alpha =b^{p/q}$ in Corollary~\ref{corollary:normal}, we see that $b^{p/q}$ is normal to base $b$ if and only if for all $k\geq 0$,
\[
\lim_{N\to \infty}\frac{1}{\log N} \sum_{1\leq |n|\leq N} \zeta\left(-k+\frac{2\pi in}{\log b}\right) \frac{e^{2\pi i n p /q  }}{n^{k+1}}= 0.
\]

\begin{notation}
 For every $m\in \mathbb{Z}$, we define $\mathbb{Z}_{\ge m}$ as the set of integers not less than $m$, and we also define $\mathbb{Z}_{>m}$ in a similar manner. For $x\in \mathbb{R}$, let $\|x\|$ denote the distance from $x$ to the nearest integer. Let $\log_b x$ be $\log x/\log b$ for all $x>0$ and $b >0$. We say that $f(x)=g(x)+o(h(x))$ (as $x\to \infty$) if for all $\epsilon>0$ there exists $x_0>0$ such that $|f(x)-g(x)|\leq h(x)\epsilon $ for all $x\geq x_0$. If $x_0$ depends on some parameters $\epsilon, a_1,\ldots ,a_n$, then we write $f(x)=g(x)+o_{a_1,\ldots , a_n}(h(x))$. We also say that $f(x)=g(x)+O(h(x))$ for all $x\geq x_0$ if there exists $C>0$ such that $|f(x)-g(x)|\leq Ch(x)$  for all $x\geq x_0$. If $C$ depends on some parameters $a_1,\ldots , a_n$, then we write $f(x)=g(x)+O_{a_1,\ldots ,a_n}(h(x))$ for all $x\geq x_0$.
\end{notation}

\section{General Results and proof of Corollaries}\label{GenerelResults}

In this section, we give more general results than the ones in the previous section. The following two theorems imply Theorem~\ref{Theorem-fractional-powers}. Therefore, the goal of this article is to give proofs of Theorems~\ref{Theorem-Bernoulli0} and \ref{Theorem-Bernoulli1}. 

\begin{theorem}\label{Theorem-Bernoulli0} Let $d$ be a positive real number, and let $\theta$ be a real number. Then, for every $N\in \mathbb{Z}_{\geq 2}$, we have
\begin{align} \nonumber
&\frac{1}{2\pi i}\sum_{1\leq |n|\leq N} \zeta(2\pi idn) \frac{e^{2\pi i\theta n}}{n}\\ \label{eq:thm-Bernoulli0-1}
&= \sum_{0<h<d\log N} \sum_{1\leq n\leq dNe^{-(h+\theta)/d}} \frac{\sin(2\pi n e^{(h+\theta)/d} ) }{n\pi } + O((\log N)^{2/3} (\log \log N)^{5/9} )\\ \label{eq:thm-Bernoulli0-2}
&=-  \sum_{0<h<d \log N} \psi_1( e^{(h+\theta)/d})+ G_N+O((\log N)^{2/3} (\log \log N)^{5/9}),
\end{align}
where
\[
|G_N|\leq \sum_{0<h<d\log N } \min \left(\frac{1}{2}, \frac{1}{2\pi \|e^{(h+\theta)/d} \| (2dNe^{-(h+\theta)/d}+1) } \right)  .
\]
\end{theorem}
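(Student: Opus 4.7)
I would treat the two displayed equalities in sequence. The transition from \eqref{eq:thm-Bernoulli0-1} to \eqref{eq:thm-Bernoulli0-2} is immediate from the classical truncated Fourier estimate
\[
\sum_{n=1}^M \frac{\sin(2\pi n y)}{n\pi} = -\psi_1(y) + \epsilon_M(y), \qquad |\epsilon_M(y)| \leq \min\!\left(\frac{1}{2},\, \frac{1}{\pi(2M+1)\|y\|}\right),
\]
applied at $y = e^{(h+\theta)/d}$ and $M = \lfloor dNe^{-(h+\theta)/d}\rfloor$. Summing over $0 < h < d\log N$ yields \eqref{eq:thm-Bernoulli0-2} with $G_N := \sum_h \epsilon_{M_h}(y_h)$ satisfying the stated bound, so the substantive task is to prove \eqref{eq:thm-Bernoulli0-1}.

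For \eqref{eq:thm-Bernoulli0-1}, the plan is to start from the Euler--Maclaurin-type identity
\[
\zeta(s) = \frac{1}{2} + \frac{1}{s-1} - s\int_1^\infty \psi_1(x)\, x^{-s-1}\, dx \qquad (\Re s > 0),
\]
applied at $s = 2\pi i d n$. After multiplying by $e^{2\pi i\theta n}/n$ and summing over $1 \leq |n| \leq N$, the boundary terms $\tfrac12$ and $1/(2\pi i dn - 1)$ contribute $O(1)$ (the former through the bounded sine series $\sum_{1\leq |n| \leq N} e^{2\pi i\theta n}/n$, the latter through absolute convergence $\ll 1/n^2$). For the integral piece I would swap summation and integration, justified by truncating the $x$-integral at some $X$ and letting $X\to\infty$, to obtain
\[
-\, d \int_1^\infty \frac{\psi_1(x)}{x}\, \mathcal{D}_N^*(\theta - d\log x)\, dx, \qquad \mathcal{D}_N^*(\alpha) := \sum_{1\leq |n|\leq N} e^{2\pi i n\alpha}.
\]
The change of variable $v = d\log x - \theta$ recasts this as $\int_{-\theta}^\infty \psi_1(e^{(v+\theta)/d})\, \mathcal{D}_N^*(v)\, dv$, an integral against (essentially) a shifted truncated Dirichlet kernel.

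Next I would exploit the localization of $\mathcal{D}_N^*(v)$ at integer $v$, by splitting this integral over unit intervals centred at each positive integer $h$. On the piece near $v = h$, I expand $\psi_1(e^{(v+\theta)/d}) = -\sum_{k\geq 1}\sin(2\pi k\, e^{(v+\theta)/d})/(k\pi)$ and estimate each $\int e^{\pm 2\pi i k e^{(v+\theta)/d}}\, \mathcal{D}_N^*(v)\, dv$ by a frequency-matching argument: the phase derivative $(2\pi k/d)\, e^{(v+\theta)/d}$ at $v = h$ equals $2\pi k y_h/d$, which resonates with a frequency $|m| \leq N$ of $\mathcal{D}_N^*$ only when $k \leq dN/y_h = M_h$. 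The surviving resonant Fourier modes reassemble into $\sum_{1\leq n\leq M_h}\sin(2\pi n y_h)/(n\pi)$, the inner sum of \eqref{eq:thm-Bernoulli0-1}; the outer range $0 < h < d\log N$ is precisely the condition $M_h \geq 1$, i.e.\ $y_h \leq dN$.

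The principal obstacle is the sharp error $O((\log N)^{2/3}(\log\log N)^{5/9})$, whose exponents are the signature of Vinogradov--Korobov bounds on $\zeta$ along $\Re s = 1$. I expect these bounds to enter either through the functional equation $\zeta(2\pi i d n) = \chi(2\pi i d n)\,\zeta(1 - 2\pi i dn)$ applied to the original sum (invoking $|\zeta(1+it)|\ll (\log|t|)^{2/3}(\log\log|t|)^{1/3}$), or through van der Corput estimates for the oscillatory sums $\sum_h \sin(2\pi n y_h)$ produced by Fourier-expanding $\psi_1$; both routes couple the Vinogradov--Korobov exponent with a logarithmic sum over $n$ to yield the stated error. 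Propagating this uniformly in $d$ and $\theta$ through the Dirichlet-kernel localization, together with the boundary-of-interval corrections and the contribution from $h$ close to the cutoff $d\log N$, is where the delicate bookkeeping lies.
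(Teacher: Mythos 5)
Your route differs from the paper's: the paper applies the functional equation $\zeta(s)=\chi(s)\zeta(1-s)$ to move to the $1$-line, writes $\zeta(1-2\pi idn)$ as a truncated Dirichlet series, applies Euler--Maclaurin to the resulting sum over $n$, and then runs a five-case stationary-phase analysis (Sections 4--7). You instead apply the Euler--Maclaurin/Abel representation of $\zeta$ directly at $s=2\pi idn$, integrate against a Dirichlet kernel after a change of variable, and appeal to a ``resonance'' heuristic. The two routes are dual to each other (your Mellin-type integral $\int_1^\infty \sin(2\pi ku)\,u^{2\pi ind-1}\,du/u$ packages the $\Gamma$-factor that the paper accesses through $\chi$), so the idea is reasonable, but there are substantive gaps that prevent this from being a proof.

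First, the interchange of $\sum_{1\le|n|\le N}$ and $\int_1^\infty$ is stated as if routine, but $\int_1^\infty|\psi_1(x)/x|\,dx$ diverges; convergence of each $\int_1^\infty\psi_1(x)x^{-2\pi idn-1}\,dx$ is only conditional, coming from the oscillation of $\psi_1$, so the swap needs an argument (dominated convergence on the truncated kernel, uniformity in $n$, etc.), not just ``truncate at $X$ and let $X\to\infty$.'' Second, and more seriously, the ``frequency-matching'' step is exactly where the theorem lives, and it is left as a heuristic. You claim the phase $\sin(2\pi k\,e^{(v+\theta)/d})$ resonates with the kernel modes $e^{2\pi i mv}$ only for $k\le dN/y_h$ and that the surviving modes ``reassemble'' into $\sum_{n\le M_h}\sin(2\pi ny_h)/(n\pi)$. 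Making this precise requires a full stationary-phase argument with explicit main term \emph{and} controlled error near the edges of each unit interval, near the endpoints $h\approx 0$ and $h\approx d\log N$, and for the tail $h>d\log N$ (which you dismiss as non-resonant but which still contributes through kernel leakage). The paper devotes Sections 5--7 (Lemmas 6.1--6.8, 7.1--7.2, plus parameter optimization over $X,V$) to exactly this, and none of it has a counterpart in your sketch. Third, the stated error $(\log N)^{2/3}(\log\log N)^{5/9}$ cannot simply be ``expected'' from Vinogradov--Korobov: in the paper, $\zeta(1+it)\ll(\log|t|)^{2/3}$ enters through Lemma~\ref{Lemma:zeta(1+it)} to bound the partial sums $\sum_{m\le M}m^{-1-2\pi idu}$, and the peculiar exponent $5/9$ is the result of an explicit two-parameter optimization of $X$ and $V$ against four competing error terms. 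Your framework contains no $\zeta$ value on the $1$-line, so you would need to exhibit where the $\log^{2/3}$ saving actually comes from; without that, the error your approach produces is plausibly only $O(\log N)$, in which case the statement is not proved.
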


\begin{theorem}{\label{Theorem-Bernoulli1}}Let $k\in \mathbb{Z}_{>0 }$, and let $d>0$ and $\theta$ be real numbers. Then for every integer $N\geq 2$,  
we have
\begin{align*}
&\frac{(k+1)!}{(2\pi i)^{k+1}}\sum_{1\leq |n|\leq N} \zeta(-k+2\pi idn) \frac{e^{2\pi i\theta n}}{n^{k+1}}\\
&= -d^k  \sum_{0<h<d \log N} \psi_{k+1}(  e^{(h+\theta)/d})+O_{k,d,\theta}(\log \log N ).
\end{align*}
\end{theorem}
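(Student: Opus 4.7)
My plan is to parallel the strategy used for Theorem~\ref{Theorem-Bernoulli0}, with the crucial simplification that, for $k\geq 1$, the Fourier expansion
\[
\psi_{k+1}(y)=-\frac{(k+1)!}{(2\pi i)^{k+1}}\sum_{n\neq 0}\frac{e^{2\pi i n y}}{n^{k+1}}
\]
converges \emph{absolutely}. This summability is what should allow us to replace the $(\log N)^{2/3}(\log\log N)^{5/9}$ error of Theorem~\ref{Theorem-Bernoulli0} with the sharper $O_{k,d,\theta}(\log\log N)$ here.

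First, I would apply the functional equation of the Riemann zeta function,
\[
\zeta(-k+2\pi i dn)=2^{-k+2\pi i dn}\pi^{-k-1+2\pi i dn}\sin\!\Big(\tfrac{\pi(-k+2\pi i dn)}{2}\Big)\Gamma(k+1-2\pi i dn)\,\zeta(k+1-2\pi i dn),
\]
and expand $\zeta(k+1-2\pi i dn)=\sum_{m=1}^\infty m^{-(k+1)+2\pi i dn}$, which converges absolutely since $\Re(k+1-2\pi i dn)=k+1\geq 2$. After interchanging the $m$- and $n$-summations, the LHS of the theorem takes the form
\[
\frac{(k+1)!\,2^{-k}\pi^{-k-1}}{(2\pi i)^{k+1}}\sum_{m=1}^\infty \frac{1}{m^{k+1}}\sum_{1\leq|n|\leq N}\frac{\sin(\pi(-k+2\pi i dn)/2)\,\Gamma(k+1-2\pi i dn)}{n^{k+1}}\,e^{2\pi i n(\theta+d\log(2\pi m))}.
\]
On the other side, inserting the Fourier series of $\psi_{k+1}$ into the RHS and swapping summations yields
\[
-d^k\sum_{0<h<d\log N}\psi_{k+1}(e^{(h+\theta)/d})=\frac{d^k(k+1)!}{(2\pi i)^{k+1}}\sum_{m\neq 0}\frac{1}{m^{k+1}}\sum_{0<h<d\log N}e^{2\pi i m e^{(h+\theta)/d}}.
\]
Thus the theorem reduces to showing that, for each $m$, the inner $n$-sum matches the inner $h$-sum up to an error which, after weighting by $m^{-(k+1)}$ and summing, totals $O_{k,d,\theta}(\log\log N)$.

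The second and main step is to evaluate the inner $n$-sum by Stirling's asymptotic for $\Gamma(k+1-2\pi i dn)$ together with the closed form of $\sin(\pi(-k+2\pi i dn)/2)$: the exponential growth $\tfrac12 e^{\pi^2 d|n|}$ of $|\sin|$ cancels the exponential decay $e^{-\pi^2 d|n|}$ of $|\Gamma|$, leaving a summand of magnitude $\asymp |n|^{-1/2}$ with an explicit oscillatory phase. Replacing this sum by its corresponding integral (Euler--Maclaurin) and applying the stationary-phase method produces an exponential-integral expression whose dominant contribution identifies each $m$ with a unique $h\in(0,d\log N)$ via the relation $e^{(h+\theta)/d}\sim 2\pi m$, reproducing precisely the matching required by the RHS. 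Summing the Stirling remainder $O_{k,d,\theta}(|n|^{-3/2})$ over $|n|\leq N$ contributes $O_{k,d,\theta}(1)$ per $m$, and then summing over $m\geq 1$ gives $O_{k,d,\theta}(1)$; the $\log\log N$ term comes from the boundary transitions as the stationary point crosses $|n|=N$.

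The \textbf{main obstacle} will be step~2: tracking the stationary-phase analysis uniformly in $m$ over $1\leq m\ll N$, correctly matching the continuous stationary parameter to the discrete index $h$, and handling the boundary region where the stationary point approaches $|n|=N$. The absolute convergence of $\sum_m m^{-(k+1)}$ and of the Fourier expansion of $\psi_{k+1}$ is what makes this bookkeeping tractable and produces the sharp $\log\log N$ bound.
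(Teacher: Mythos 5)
Your broad outline matches the paper's: functional equation of $\zeta$, Dirichlet series for $\zeta(k+1-2\pi i dn)$, conversion of the $n$-sum via Euler--Maclaurin into integrals indexed by $h$, stationary phase, and identification with the Fourier expansion of $\psi_{k+1}$. Your observation that absolute convergence of the Dirichlet series (for $k\ge 1$) eases the bookkeeping relative to the $\sigma=0$ case is also correct in spirit, though the paper handles both cases by truncating the series at $M\asymp N$ via the approximate functional equation rather than using the infinite sum.

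However, the central mechanism is misdescribed in two ways that constitute real gaps. First, the sum over $h$ does not arise from stationary phase; it arises from the Fourier expansion of the sawtooth $\psi_1(u)$ in the Euler--Maclaurin correction term (equivalently, from a Poisson-type rewriting of $\sum_n$), which converts $\sum_{n\le N}u^{-1/2}e(F_m(n))$ into $\sum_{h}\int u^{-1/2}e(F_m(u)-hu)\,du$ before any stationary-phase analysis is applied. Second, each $m$ is not paired with a \emph{unique} $h$; for each $m$, the stationary point $\xi_{m,h}=(m/d)\,e^{(\theta-h)/d}$ lies in the integration window for a whole interval of $h$ of length $\approx d\log N$, and it is this range that produces the outer sum $\sum_{0<h<d\log N}$ on the right-hand side. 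Relatedly, your stated matching $e^{(h+\theta)/d}\sim 2\pi m$ is incorrect: the stationary-phase evaluation yields $e\bigl(m e^{(\theta-h)/d}\bigr)/d^{1/2}$, and after re-indexing $h\mapsto -h$ this matches the $m$-th Fourier coefficient of $\psi_{k+1}(e^{(\theta+h)/d})$; there is no factor of $2\pi$ in that relation. Finally, the $O_{k,d,\theta}(\log\log N)$ error does not come simply from "boundary transitions at $|n|=N$"; in the paper it arises from several contributions at once --- truncating $|h|\le H$ with $H$ polylogarithmic in $N$, harmonic-type tails $\sum_{h} (h-H_2)^{-1}$ near the transition thresholds $H_1,H_2$, and the stationary-phase remainder terms --- each carefully balanced by the choice of the parameters $X$ and $V$. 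As written, your sketch does not identify how these are controlled, so the claimed error bound would not follow without substantially more work.
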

We will give proofs of Theorems~\ref{Theorem-Bernoulli0} and \ref{Theorem-Bernoulli1} in Section~\ref{section:Proof_of_Theorems_2.1_and_2.2}. By substituting $d=1/\log b$ and $\theta= \log \alpha / \log b$ in Theorem~\ref{Theorem-Bernoulli1}, we immediately obtain Theorem~\ref{Theorem-fractional-powers} with $k>0$. Furthermore, Theorem~\ref{Theorem-Bernoulli0} implies  Theorem~\ref{Theorem-fractional-powers} with $k=0$. To prove this implication, we apply the following theorem.

\begin{theorem}[Ridout's theorem]\label{Theorem:Ridout}Let $\alpha$ be any algebraic irrational number. 
Let $Q_1,\ldots , Q_t$ be distinct primes. Let $b$ be an integer of the form 
\begin{equation}\label{equation:primefactors}
    b=Q_1^{\sigma_1}\cdots Q_{t}^{\sigma_t},
\end{equation}
where $\sigma_1,\ldots, \sigma_t$ are non-negative integers. Then for any $\epsilon>0$, there exists $C>0$ such that $|\alpha-a/b |\geq Cb^{-1-\epsilon}$
for every $a\in \mathbb{Z}$ and $b$ of the form  \eqref{equation:primefactors}. 
\end{theorem}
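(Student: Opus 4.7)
The plan is to mirror the Thue--Siegel--Roth proof paradigm in the $S$-adic setting with $S = \{\infty, Q_1, \ldots, Q_t\}$. The key observation is that restricting the prime factors of $b$ translates, via the product formula $\prod_v |r|_v = 1$ for $r \in \mathbb{Q}^\times$ (the product being over all places of $\mathbb{Q}$), into a bonus factor of $b^{-1}$ in the final estimate. That extra factor is precisely what turns Roth's exponent $2+\epsilon$ into Ridout's exponent $1+\epsilon$.

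Concretely, I would argue by contradiction: assume there exist infinitely many pairs $(a_n, b_n)$ in lowest terms with $b_n$ of the form \eqref{equation:primefactors} and $|\alpha - a_n/b_n| < b_n^{-1-\epsilon}$. Passing to a rapidly growing subsequence, I would construct (via Siegel's lemma) an auxiliary polynomial $P(x_1, \ldots, x_m) \in \mathbb{Z}[x_1, \ldots, x_m]$ of partial degrees $r_j$ calibrated so that $r_j \log b_j$ is roughly independent of $j$, with controlled archimedean height, vanishing to high index at $(\alpha, \ldots, \alpha)$. Taylor expansion at $(\alpha, \ldots, \alpha)$ then forces $|P(a_1/b_1, \ldots, a_m/b_m)|_{\infty}$ to be very small. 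Running the same evaluation at each $Q_i$-adic place, the high $Q_i$-divisibility of the $b_j$ makes the quantity $Q_i$-adically small as well. Combining these local bounds across all $v \in S$ and comparing with the trivial bounds outside $S$, the product formula would be violated unless $P$ actually vanishes at $(a_1/b_1, \ldots, a_m/b_m)$.

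The genuinely deep step, and the main obstacle, is then to show that $P$ cannot vanish at this point to the prescribed index: this is the $S$-adic analogue of Roth's lemma, proved by a delicate induction on the number $m$ of variables, exactly as in Roth's original argument but keeping track of all places in $S$ simultaneously. A cleaner modern route, and the one I would actually take if writing the proof today, is to invoke Schmidt's Subspace Theorem applied to the linear forms $L_{\infty}(x,y) = x - \alpha y$ together with $L_{Q_i}(x,y) = y$ (one at each finite place in $S$): a direct calculation of the product of local norms at the candidate solutions $(a_n, b_n)$ reduces Ridout's statement to a single off-the-shelf application of the Subspace Theorem, bypassing the explicit polynomial construction. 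In either approach, once the finiteness of exceptional pairs is in hand, those finitely many exceptions are absorbed into the constant $C$ to produce the uniform inequality $|\alpha - a/b| \geq C b^{-1-\epsilon}$ asserted in the theorem.
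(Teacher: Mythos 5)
The paper does not prove Ridout's theorem at all; it simply cites Ridout's 1957 paper, so there is no in-paper argument to compare against. Your sketch is a correct high-level account of how the theorem is actually proved, both via the classical $S$-adic Thue--Siegel--Roth machinery (Siegel's lemma to build the auxiliary polynomial, Taylor expansion to make it archimedean-small, $p$-adic valuations of the $b_j$ to make it $Q_i$-adically small, the product formula to force vanishing, and an $S$-adic Roth's lemma to rule that out) and via the cleaner modern reduction to Schmidt's Subspace Theorem. Your product-formula heuristic for where the exponent drops from $2+\epsilon$ to $1+\epsilon$ is exactly right: since $b=Q_1^{\sigma_1}\cdots Q_t^{\sigma_t}$, one has $\prod_i |b|_{Q_i}=b^{-1}$, and that is the extra factor. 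One small precision on the Subspace Theorem route: at each finite place $Q_i$ you need a full system of $n=2$ linearly independent forms, say $L_{Q_i,1}(x,y)=x$ and $L_{Q_i,2}(x,y)=y$ (not just $y$), paired with $L_{\infty,1}=x-\alpha y$ and $L_{\infty,2}=y$ at the archimedean place; with that adjustment the computation $\prod_{v\in S}\prod_j |L_{v,j}(a,b)|_v \ll b^{-\epsilon}$ goes through and the theorem follows after absorbing the finitely many exceptional solutions into the constant $C$.
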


\begin{proof}
See \cite{Ridout}.
\end{proof}
Before stating the proof, we note that $f(X) \ll g(X)$ and $f(X) \ll_{a_1,\ldots, a_n} g(X)$  as $f(X)=O(g(X))$ and $f(X)=O_{a_1,\ldots, a_n}(g(X))$ respectively, where $g(X)$ is non-negative. In addition, we write $f(X)\asymp g(X)$ if $f(X)\ll g(X)\ll f(X)$. 
\begin{proof}[Proof of Theorem~\ref{Theorem-fractional-powers} with $k=0$ assuming Theorem~\ref{Theorem-Bernoulli0}] By substituting $d=1/\log b$ and $\theta= \log \alpha / \log b$ in \eqref{eq:thm-Bernoulli0-2} of Theorem~\ref{Theorem-Bernoulli0}, for every $N\geq 2$
\begin{gather*}
\frac{1}{2\pi i}\sum_{1\leq |n|\leq N} \zeta\left(\frac{2\pi in}{\log b}\right) \frac{e^{2\pi i n \log_b \alpha  }}{n}= -  \sum_{0<h<\log_b N} \psi_{1}( b^{h}\alpha  )+G_N +o_{b,\alpha}(\log N ), \\
|G_N| \ll \sum_{0<h< \log _b N} \min \left( 1, \frac{b^h}{\|b^h \alpha \|  N} \right). 
\end{gather*}
Let $\gamma$ be an arbitrarily small positive constant. By substituting $\alpha:=\alpha$, $b:=b^{h}$, and  $\epsilon:=\gamma$ in Theorem~\ref{Theorem:Ridout}, we obtain 
\begin{equation}\label{inequality:Ridout}
    \|b^{h} \alpha\|\gg_\gamma b^{-\gamma h} .
\end{equation}
Therefore, the inequality \eqref{inequality:Ridout} yields that 
\[
\sum_{0< h < \frac{\log_b N}{1+\gamma}  }\min\left(1, \frac{b^{h}}{\|b^h \alpha \| N}\right) 
\ll_\gamma \frac{1}{N}\sum_{0< h < \frac{\log_b N}{1+\gamma}  } b^{(1+\gamma)h} \ll_\gamma 1.
\]
 We also obtain 
\[
\sum_{\frac{\log_b N}{1+\gamma}<h< \log_b N} \min\left(1, \frac{b^{h}}{\|b^h \alpha \| N}\right) 
\leq \sum_{\frac{\log_b N}{1+\gamma}<h< \log_b N} 1
\ll \gamma \log_b N+1, 
\]
where the implicit constant does not depend on $\gamma$. Therefore, we have
\[
G_N=O_\gamma(1) +O(\gamma \log_b N +1  ),
\]
and hence $\limsup_{N\to \infty} G_N/\log N \ll \gamma$, where the implicit constant does not depend on $\gamma$. By choosing $\gamma\to 0$, we conclude $G_N=o(\log N)$. 
\end{proof}

\begin{corollary}\label{Corollary-rational} Let $u$ and $v$ be co-prime integers with $u>v\ge 2$. Let $k\in \mathbb{Z}_{\ge 0}$. We have  
\begin{align*}
&\frac{(k+1)!}{(2\pi i)^{k+1}}\sum_{1\leq |n|\leq N}\zeta\left(-k+\frac{2\pi in}{\log (u/v)}\right) \frac{1} {n^{k+1}}\\
&= -\frac{1}{\log^k(u/v)}  \sum_{0<h<\log_{u/v} N} \psi_{k+1}\left( \left(u/v \right)^{h}\right)+o_{k,u,v}(\log N ) \quad \text{as $N\to \infty$}. 
\end{align*}
Especially, $((u/v )^{h})_{h=1}^\infty$ is uniformly distributed modulo $1$ if and only if for every $k\in \mathbb{Z}_{\ge 0}$
\[
\lim_{N\to\infty} \frac{1}{\log N}\sum_{1\leq |n|\leq N}\zeta\left(-k+\frac{2\pi in}{\log (u/v)}\right) \frac{1} {n^{k+1}}=0.
\]
\end{corollary}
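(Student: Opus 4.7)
The plan is to deduce the corollary from Theorems~\ref{Theorem-Bernoulli0} and~\ref{Theorem-Bernoulli1} by the substitutions $d=1/\log(u/v)$ and $\theta=0$. Under these substitutions, $e^{(h+\theta)/d}=(u/v)^h$ and $d\log N=\log_{u/v} N$, so the Bernoulli sums on the right-hand sides of the two theorems match $\sum_{0<h<\log_{u/v}N}\psi_{k+1}((u/v)^h)$ (up to the scalar $d^k=1/\log^k(u/v)$). For $k\geq 1$ the claim is immediate: Theorem~\ref{Theorem-Bernoulli1} gives the identity with error $O_{k,u,v}(\log\log N)$, which is trivially $o_{k,u,v}(\log N)$.

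For $k=0$ I would invoke~\eqref{eq:thm-Bernoulli0-2} of Theorem~\ref{Theorem-Bernoulli0}. The explicit $O((\log N)^{2/3}(\log\log N)^{5/9})$ term is already $o(\log N)$, so the work lies in showing $G_N=o_{u,v}(\log N)$. In place of Ridout's theorem, which is inapplicable because here the ``$\alpha$'' is rational, I would use the elementary bound $\|(u/v)^h\|\geq 1/v^h$ (valid since $\gcd(u^h,v^h)=1$ and $v\geq 2$). Mimicking the split used in the proof of Theorem~\ref{Theorem-fractional-powers} with exponent $\gamma=\log v/\log(u/v)$, I would break the sum at $h_0=\log_u N$: for $h\leq h_0$ a geometric-series estimate yields $O_{u,v}(1)$, and for $h_0<h<\log_{u/v}N$ the $\tfrac{1}{2}$-branch of the minimum in the upper bound on $G_N$ is operative.

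Once the asymptotic equality is in hand, the ``especially'' biconditional follows by a verbatim repetition of the argument in the proof of Corollary~\ref{corollary:normal}: the polynomials $\{1,\psi_1,\psi_2,\ldots\}$ span $\mathbb{R}[x]$, so by the Weierstrass approximation theorem uniform distribution of $((u/v)^h)_{h=1}^\infty$ modulo~$1$ is equivalent to $\frac{1}{H}\sum_{h=1}^H\psi_{k+1}((u/v)^h)\to 0$ for every $k\geq 0$; via the asymptotic equality, this translates directly into the zeta sum being $o(\log N)$ for every $k\geq 0$.

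The principal obstacle is the $k=0$ step. In the algebraic irrational setting Ridout's theorem permits the splitting exponent $\gamma$ to be taken arbitrarily small, making the trivial range of the split negligible under $\gamma\to 0$ and yielding an $o(\log N)$ bound. For rational $u/v$ the analogous exponent $\gamma=\log v/\log(u/v)$ coming from $\|(u/v)^h\|\geq 1/v^h$ is a fixed positive constant, so the naive splitting only delivers $G_N=O(\log N)$. Closing the gap to $o(\log N)$ appears to require either a sharper Diophantine estimate on $\|(u/v)^h\|$ (for instance via Baker's theorem on linear forms in logarithms, to obtain $\|(u/v)^h\|\gg_\varepsilon(u/v)^{-\varepsilon h}$ outside a sparse exceptional set) or a direct cancellation estimate for the oscillatory tail sum $\sum_{n>M_h}\sin(2\pi n(u/v)^h)/n$ after interchanging the order of summation in $h$ and $n$.
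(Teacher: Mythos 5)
Your reduction via $d=1/\log(u/v)$, $\theta=0$ is exactly the paper's, and the $k\ge 1$ case together with the ``especially'' equivalence (Weierstrass approximation plus the span of the periodic Bernoulli polynomials, as in Corollary~\ref{corollary:normal}) is fine. The problem is the $k=0$ case, which you explicitly leave open: as you correctly diagnose, the elementary bound $\|(u/v)^h\|\ge v^{-h}$ only gives a splitting exponent $\gamma=\log v/\log(u/v)$ that is a fixed positive constant, hence $G_N=O(\log N)$ and not $o(\log N)$. So as written the proposal does not prove the corollary for $k=0$; it proves it only for $k\ge 1$.

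The missing ingredient is precisely Mahler's theorem on fractional parts of powers of a rational number, which the paper invokes (\cite[Theorem~2]{Mahler}): for every $\gamma>0$ one has $\|(u/v)^h\|\gg_{u,v,\gamma} e^{-\gamma h}$ for all $h\in\mathbb{Z}_{>0}$. This is the exact rational-number analogue of the Ridout bound \eqref{inequality:Ridout}, so with it the argument in the proof of Theorem~\ref{Theorem-fractional-powers} with $k=0$ carries over verbatim: split at $h=\log_{u/v}N/(1+\gamma)$, get $O_\gamma(1)+O(\gamma\log N)$, and let $\gamma\to 0$ to conclude $G_N=o(\log N)$. Your first suggested repair (a sharper Diophantine estimate) is thus the right instinct, but the needed statement is Mahler's 1957 result (itself a consequence of Ridout's $p$-adic Roth theorem, and ineffective, holding for all $h$ rather than outside an exceptional set); Baker-type linear forms in logarithms give effective but substantially weaker lower bounds and would not suffice in this clean form. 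Your second suggested repair (direct cancellation in the tail sum) is not what the paper does and is not fleshed out, so it cannot be credited as closing the gap.
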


\begin{proof}
Fix any $k\in \mathbb{Z}_{\ge 0}$. By substituting $d=1/\log (u/v)$ and $\theta=0$, we see that $e^{(\theta+h)/d}= (u/v)^h$ for every $h\in \mathbb{Z}_{>0}$. Thus, Theorem~\ref{Theorem-Bernoulli1} leads to Corollary~\ref{Corollary-rational} with $k>0$. 

For $k=0$, take an arbitrarily small constant $\gamma>0$. By Mahler's result \cite[Theorem 2]{Mahler}, each $h\in \mathbb{Z}_{>0}$ satisfies $\|(u/v)^h \| \gg_{u,v,\gamma} e^{-\gamma h}$. In a similar manner with the proof of Theorem~\ref{Theorem-fractional-powers} with $k=0$ assuming Theorem~\ref{Theorem-Bernoulli0}, we obtain Corollary~\ref{Corollary-rational} with $k=0$ from Theorem~\ref{Theorem-Bernoulli0}. 
\end{proof}

Let $k\in \mathbb{Z}_{\ge 0}$. By the Fourier expansion of the $k$-th Bernoulli polynomial \cite[Theorem~4.11]{ArakawaIbukiyamaKaneko},  for every $\theta \in \mathbb{R}$, we have   
\begin{equation}\label{eq:FourierExp}
\lim_{N\to\infty}\frac{(k+1)!}{(2\pi i)^{k+1}  } \sum_{1\leq |n|\leq N }  \frac{e^{2\pi i \theta n}}{ n^{k+1}  } = - \psi_{k+1} (\theta).  
\end{equation}
Thus, it is observed that our main theorems are similar to the quantity obtained from \eqref{eq:FourierExp} by putting $\zeta(-k +2\pi i dn)$ in front of $e^{2\pi i \theta n}/n^{k+1}$. Observing Theorem~\ref{Theorem-Bernoulli0} and Theorem~\ref{Theorem-Bernoulli1}, we see that the above operation causes the transformation from $\psi_{k+1}(\theta)$ to $\sum_{h} \psi_{k+1}(  e^{(h+\theta)/d})$.  We do not fully understand why such a phenomenon occurs.

At last of this section, we give another application of Theorems~\ref{Theorem-Bernoulli0} and \ref{Theorem-Bernoulli1}. 
\begin{corollary}\label{Corollary-SpecialValue}
For all $\alpha\in \mathbb{Z}_{>0}$, $b\in \mathbb{Z}_{\geq 2}$, and  $k\in \mathbb{Z}_{\ge 0}$, we have
\begin{align}\label{eq:main2}
&\lim_{N\to \infty}\frac{1}{\log N} \sum_{1\leq |n|\leq N} \zeta\left(-k+\frac{2\pi in}{\log b}\right) \frac{e^{2\pi i n \log \alpha /\log b  }}{n^{k+1}} \\[4pt] \nonumber
&= 
\left\{
\begin{array}{cl}
-\cfrac{(-1)^{(k+1)/2}(2\pi )^{k+1}B_{k+1}}{(k+1)!\log^{k+1} b} & \text{if $k$ is odd,}\\
\: \\[-9pt]
0 & \text{otherwise}, 
\end{array} \right.
\end{align}
where $B_{k+1}$ denotes the $(k+1)$-st Bernoulli number.   
\end{corollary}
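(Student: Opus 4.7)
The plan is to apply Theorems~\ref{Theorem-Bernoulli1} and~\ref{Theorem-Bernoulli0} with $d=1/\log b$ and $\theta=\log\alpha/\log b$, exploiting the arithmetic fact that, because $\alpha\in\mathbb{Z}_{>0}$ and $b\in\mathbb{Z}_{\ge 2}$, one has $e^{(h+\theta)/d}=b^{h}\alpha\in\mathbb{Z}_{>0}$ for every $h\in\mathbb{Z}_{>0}$. This integrality collapses the Bernoulli/sine quantities on the right-hand sides of the two theorems, which is what makes the limit computable in closed form.

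For $k\ge 1$, I would invoke Theorem~\ref{Theorem-Bernoulli1}. Since $b^{h}\alpha\in\mathbb{Z}$ and $k+1\ge 2$, we have $\psi_{k+1}(b^{h}\alpha)=B_{k+1}(\{b^{h}\alpha\})=B_{k+1}(0)=B_{k+1}$, so
\[
\sum_{0<h<\log_b N}\psi_{k+1}(b^{h}\alpha)=\frac{B_{k+1}\log N}{\log b}+O(1).
\]
Substituting this into Theorem~\ref{Theorem-Bernoulli1}, solving for the zeta sum, and dividing by $\log N$ yields
\[
\lim_{N\to\infty}\frac{1}{\log N}\sum_{1\le|n|\le N}\zeta\!\left(-k+\frac{2\pi in}{\log b}\right)\frac{e^{2\pi in\log_b\alpha}}{n^{k+1}}=-\frac{(2\pi i)^{k+1}B_{k+1}}{(k+1)!\log^{k+1}b}.
\]
For odd $k$, writing $(2\pi i)^{k+1}=(2\pi)^{k+1}(-1)^{(k+1)/2}$ recovers the first branch of~\eqref{eq:main2}; for even $k\ge 2$ the index $k+1\ge 3$ is odd so $B_{k+1}=0$ and the limit vanishes, matching the second branch.

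The remaining case $k=0$ must be handled via Theorem~\ref{Theorem-Bernoulli0}, and choosing the right form of that theorem is the main obstacle. Formula~\eqref{eq:thm-Bernoulli0-2} alone is not strong enough: although $\psi_{1}(b^{h}\alpha)=0$ (since $b^{h}\alpha\in\mathbb{Z}$), the bound on $G_N$ degenerates to $\tfrac12$ per term when $\|b^{h}\alpha\|=0$, giving only $G_N\ll\log N$. The resolution is to use the sharper identity~\eqref{eq:thm-Bernoulli0-1} instead: because $b^{h}\alpha$ is a positive integer, $\sin(2\pi n b^{h}\alpha)=0$ for every $n\in\mathbb{Z}$, so the inner double sum vanishes identically, and the left-hand side is $O((\log N)^{2/3}(\log\log N)^{5/9})$. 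Dividing by $\log N$ produces a limit of $0$, which coincides with the $k=0$ (even) case of~\eqref{eq:main2}, completing the proof.
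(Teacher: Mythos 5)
Your proposal is correct and follows essentially the same route as the paper: substitute $d=1/\log b$, $\theta=\log_b\alpha$ so that $e^{(h+\theta)/d}=b^h\alpha\in\mathbb{Z}$, use Theorem~\ref{Theorem-Bernoulli1} with $\psi_{k+1}(b^h\alpha)=B_{k+1}$ for $k\ge 1$, and use \eqref{eq:thm-Bernoulli0-1} of Theorem~\ref{Theorem-Bernoulli0} (the vanishing sine sums) for $k=0$. Your observation that \eqref{eq:thm-Bernoulli0-2} alone would be insufficient because the $G_N$ bound degenerates at integer points is accurate and consistent with the paper's choice of \eqref{eq:thm-Bernoulli0-1}.
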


\begin{proof}
Fix arbitrary $\alpha\in \mathbb{Z}_{>0}$ and $b\in \mathbb{Z}_{\ge 2}$.  By substituting $d = 1/\log b$ and $\theta = \log_b \alpha$, we have $e^{(h+\theta)/d}=b^{h} \alpha \in \mathbb{Z}$ for every $h\in \mathbb{Z}_{\ge 0}$.   By \eqref{eq:thm-Bernoulli0-1} in Theorem~\ref{Theorem-Bernoulli0}, 
\[
\frac{1}{2\pi i}\sum_{1\leq |n|\leq N} \zeta\left(\frac{2\pi in}{\log b}\right) \frac{e^{2\pi i n\log_b \alpha }}{n}
=  O((\log N)^{2/3} (\log \log N)^{5/9} ),
\]
which leads to Corollary~\ref{Corollary-SpecialValue} with $k=0$. Take any $k\in \mathbb{Z}_{>0}$. Theorem~\ref{Theorem-Bernoulli1} also yields that   
\[
\frac{(k+1)!}{(2\pi i)^{k+1}}\sum_{1\leq |n|\leq N} \zeta\left(-k+\frac{2\pi in}{\log b}\right) \frac{e^{2\pi i n\log_b \alpha }}{n^{k+1}}= -\frac{1}{\log^k b} \sum_{0<h<\log_b N} \psi_{k+1}(0)+O_{k,b,\alpha}(\log\log N ).
\]
Since $\psi_{k+1}(0)=B_{k+1}$ for every $k\in \mathbb{Z}_{> 0}$ and $B_{k+1}=0$ for every even $k\in \mathbb{Z}_{\ge 0}$, we conclude Corollary~\ref{Corollary-SpecialValue} with $k>0$. 
\end{proof}
By the theory of special values of zeta functions (see \cite[Corollary~4.12]{ArakawaIbukiyamaKaneko}), for every odd number $k\in \mathbb{Z}_{>0}$, we have 
\[
\zeta(k+1)= -\frac{(-1)^{(k+1)/2} (2\pi)^{k+1} B_{k+1} }{2 (k+1)!}.  
\]
Therefore, Corollary~\ref{Corollary-SpecialValue} yields that the left-hand side of \eqref{eq:main2} is equal to 
\[
2\zeta(k+1)/\log^{k+1} b
\]
for every odd $k\in \mathbb{Z}_{>0}$. Further, the case when $k$ is even corresponds to trivial zeros of the Riemann zeta function.

\section{Sketch of the proofs of Theorems~\ref{Theorem-Bernoulli0} and \ref{Theorem-Bernoulli1}}

Let $e(x)=e^{2\pi i x}$ for every $x\in \mathbb{R}$. In this section, we calculate heuristically and describe a sketch of the proof of Theorems~\ref{Theorem-Bernoulli0} and \ref{Theorem-Bernoulli1}.  
For simplicity, we first discuss the case $k\in \mathbb{Z}_{>0}$.  We start with the functional equation of the Riemann zeta function.
\begin{lemma}\label{Lemma:FunctionalEquation} For every $s\in \mathbb{C}\setminus \{1\}$, we have
$\zeta (s) =\chi (s) \zeta(1-s)$, where $\chi(s)=2^{s-1}\pi^s\sec(\pi s/2)/\Gamma(s)$. Further, for any fixed $\sigma\in \mathbb{R}$ and for sufficiently large $t>1$, we have
\[
\chi(\sigma+it)= \left(2\pi/ t \right)^{\sigma+it-1/2}e^{i (t+\pi/4)}\left(1+O\left(\frac{1}{t}\right)\right).
\]
\end{lemma}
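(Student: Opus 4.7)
The plan is to prove the two assertions by classical methods.

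For the functional equation, I would start from Riemann's Mellin-transform argument. The identity
\[
\pi^{-s/2}\Gamma(s/2)\zeta(s) = \tfrac{1}{2}\int_{0}^{\infty}\bigl(\vartheta(x)-1\bigr)x^{s/2-1}\,dx,
\]
where $\vartheta(x) = \sum_{n\in \mathbb{Z}} e^{-\pi n^{2}x}$, combined with the Poisson-summation identity $\vartheta(1/x) = \sqrt{x}\,\vartheta(x)$, yields (after splitting the integral at $x=1$ and substituting $x \mapsto 1/x$ on the piece $0<x<1$) an expression manifestly invariant under $s \mapsto 1-s$. This produces $\zeta(s) = \pi^{s-1/2}\,\Gamma((1-s)/2)/\Gamma(s/2)\cdot \zeta(1-s)$. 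The Legendre duplication formula $\Gamma(s/2)\Gamma((s+1)/2) = 2^{1-s}\sqrt{\pi}\,\Gamma(s)$ together with the reflection formula $\Gamma(s)\Gamma(1-s) = \pi/\sin(\pi s)$ then rewrite the prefactor in the two equivalent forms
\[
\chi(s) = 2^{s}\pi^{s-1}\sin(\pi s/2)\,\Gamma(1-s) = \frac{2^{s-1}\pi^{s}\sec(\pi s/2)}{\Gamma(s)}.
\]

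For the asymptotic, I would work with the first form. Write $s = \sigma + it$ with $\sigma$ fixed and $t \to +\infty$. Stirling's formula applied to $z = 1-s$ gives
\[
\log \Gamma(1-s) = \bigl(\tfrac{1}{2} - s\bigr)\log(1-s) - (1-s) + \tfrac{1}{2}\log(2\pi) + O(1/t),
\]
and the principal-branch expansion reads $\log(1-s) = \log t - i\pi/2 + i(1-\sigma)/t + O(1/t^{2})$. Simultaneously, $\sin(\pi s/2) = (i/2)\,e^{-i\pi s/2}\bigl(1 + O(e^{-\pi t})\bigr)$ because the $e^{-i\pi s/2}$ term dominates in $(e^{i\pi s/2} - e^{-i\pi s/2})/(2i)$ for large positive $t$. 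Summing the logarithms of $2^{s}\pi^{s-1}$, this sine asymptotic, and Stirling's expansion for $\Gamma(1-s)$, the phase $i\pi s/2$ supplied by the sine cancels the imaginary contribution of $(1/2 - s)\log(-it)$, the $(1-\sigma)$ piece contributed by the subleading expansion of $\log(1-s)$ cancels with $-(1-s)$, and after collecting what remains one obtains $(s - 1/2)\log(2\pi/t) + i(t + \pi/4) + O(1/t)$, which is precisely the claimed formula.

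The main obstacle is the exponent bookkeeping in the asymptotic step: one must (a) choose the branch $\log(-it) = \log t - i\pi/2$ consistently, (b) retain the subleading $i(1-\sigma)/t$ in $\log(1-s)$ because the large factor $1/2 - s \sim -it$ amplifies it into an $O(1)$ contribution that would otherwise be missed, and (c) verify that the cancellations between the sine phase and the imaginary part of Stirling indeed telescope the errors down to $O(1/t)$ rather than leaving $O(1)$ residue. A useful sanity check is the identity $\chi(s)\chi(1-s) = 1$ forced by the functional equation; the claimed asymptotic is consistent under $\sigma \mapsto 1-\sigma$, $t \mapsto -t$, which both supports the final expression and helps pin down the constant $e^{i\pi/4}$.
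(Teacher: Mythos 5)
Your proposal is correct, and it is worth noting that the paper itself offers no proof of this lemma at all: it is quoted as a classical fact (it is the asymmetric form of the functional equation together with the standard asymptotic for the $\chi$-factor, as in Titchmarsh, Theorem~2.1 and equation~(4.12.3)). What you have written is essentially the textbook derivation that the paper implicitly relies on. Your reduction of the symmetric functional equation via the duplication and reflection formulas does give both forms of $\chi$, and your Stirling computation closes correctly: with $z=1-s$, $\log(1-s)=\log t-i\pi/2+i(1-\sigma)/t+O(1/t^{2})$, the term $(1/2-s)\cdot i(1-\sigma)/t$ produces exactly the $(1-\sigma)$ that cancels against $-(1-s)$, the $\pi t/2$ from the dominant exponential in $\sin(\pi s/2)$ cancels the $-\pi t/2$ from $(1/2-s)(-i\pi/2)$, the constant phases combine to $i\pi/4$, and the remaining terms assemble into $(s-\tfrac12)\log(2\pi/t)+i(t+\pi/4)+O(1/t)$, which is the claimed formula; the neglected branch of the sine contributes only $O(e^{-\pi t})$, well inside the error. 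The only points to state explicitly in a written-out version are the validity of Stirling's formula in the sector containing $1-s$ (here $\arg(1-s)\to-\pi/2$, safely inside $|\arg z|\le\pi-\delta$) and the analytic continuation step in the theta-integral argument (the completed integral plus the $1/(s(s-1))$ term, rather than ``manifest'' invariance of the original integral, which only converges for $\Re s>1$); neither is a gap in substance, just in wording.
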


By applying Lemma~\ref{Lemma:FunctionalEquation}, we obtain 
\begin{align}\nonumber
\zeta(-k+2\pi idt) &\sim (d t)^{1/2+k-2\pi idt} e(dt+1/8) \zeta (k+1-2\pi idt) \\ \label{equation:strategy1}
&= e(1/8) d^{1/2+k}\sum_{m=1}^\infty \frac{t^{1/2+k}}{m^{k+1}}\cdot  e\left( dt\log \left(\frac{me}{dt}\right) \right),  
\end{align}
where we say that $X\sim Y$ if $X=Y+(\text{errors})$ and the errors are ignorable, but we do not give a precise definition of ``ignorable'' because we will check all the details later. Therefore, we have 
\begin{equation}\label{eq:sketch2}
\sum_{n=1}^N \zeta (-k+2\pi idn) \frac{e(\theta n)}{n^{k+1}} \sim e(1/8) d^{1/2+k} \sum_{m=1}^\infty \frac{1}{m^{k+1}} \sum_{n=1}^N \frac{e(dn\log \left(\frac{me}{dn} \right) +\theta n )}{n^{1/2}}. 
\end{equation}
 For every $x\in [1,N]$, we define 
 \begin{equation}\label{defi:Fm(x)}
 F_m(x)= dx\log (me/(dx))+\theta x.
 \end{equation}
 We need to evaluate
\[
S_m(N)\coloneqq  \sum_{n=1}^N \frac{e(F_m(n))}{n^{1/2}}.
\]
\begin{lemma}\label{Lemma:sum-to-integral}
Let $f(x)$ be a real function with a continuous and steadily decreasing derivative $f'(x)$ in $(a,b)$, and let $f'(b)=\alpha$, $f'(a)=\beta$. Let $g(x)$ be a real positive decreasing function, with a continuous derivative $g'(x)$, and let $|g'(x)|$ be steadily decreasing. Then
\begin{align*}
\sum_{a<n \leq b} g(n) e(f(n)) &= \sum_{\alpha -\eta < h < \beta +\eta} \int_{a}^b g(x) e(f(x)-h x) dx \\
&+O (g(a)\log (\beta -\alpha +2) ) +O(|g'(a)|),    
\end{align*}
where $\eta$ is any positive constant less than $1$.  
\end{lemma}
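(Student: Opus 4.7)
The plan is to prove this as a truncated Poisson-summation identity, following the classical approach of Titchmarsh (Theory of the Riemann Zeta-Function, Lemma 4.7). The underlying heuristic is that, by Poisson summation applied formally to the cut-off $g(x) e(f(x)) \chi_{(a,b]}(x)$, one has
\[
\sum_{a < n \le b} g(n) e(f(n)) = \sum_{h \in \mathbb{Z}} \int_a^b g(x) e(f(x) - hx) dx,
\]
and since $f'$ takes values only in $[\alpha,\beta]$ on $(a,b)$, stationary phase can only occur for integers $h$ in or very near that interval; the remaining $h$ should be small and absorbed into the error.

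To make this rigorous, I would first reduce to the case $g\equiv 1$ by Abel summation: setting $S(x) := \sum_{a < n \le x} e(f(n))$,
\[
\sum_{a < n \le b} g(n) e(f(n)) = g(b) S(b) - \int_a^b g'(x) S(x) dx,
\]
so it suffices to prove an analogous identity for $S(x)$ uniformly in $x \in (a,b]$. The extra $O(|g'(a)|)$ in the conclusion then appears naturally from integrating the error of $S(x)$ against $g'$, using that $|g'|$ is monotone. For $S(x)$ itself I would apply the Euler--Maclaurin expansion against the sawtooth $\rho(u) = \{u\} - 1/2$,
\[
S(x) = \int_a^x e(f(u)) du - 2\pi i \int_a^x f'(u) e(f(u)) \rho(u) du + (\text{boundary terms at } a, x),
\]
and then substitute the truncated Fourier series $\rho(u) = -\frac{1}{2\pi i}\sum_{0 < |h| \le H} e(hu)/h + R_H(u)$. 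After exchanging sum and integral, and a further integration by parts to strip off the $f'(u)$ factor, the contribution from $0 < |h| \le H$ collapses to $\sum_{0 < |h| \le H} \int_a^x e(f(u) - hu) du$ up to boundary errors that are controlled by $g(a)$.

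The main obstacle is the localization to $\alpha - \eta < h < \beta + \eta$, which is what produces the factor $\log(\beta - \alpha + 2)$. For integers $h$ outside this range, the phase derivative $f'(u) - h$ is continuous, monotone in $u$, and bounded below in modulus by $\min(|h - \alpha|, |h - \beta|) \ge \eta$ throughout $(a,x)$; one integration by parts then gives
\[
\int_a^x e(f(u) - hu) du \ll \frac{1}{|h - \alpha|} + \frac{1}{|h - \beta|}.
\]
Summing these bounds over $h \notin (\alpha - \eta, \beta + \eta)$ yields two truncated harmonic tails centered near $\alpha$ and $\beta$, each of total size $O(\log(\beta - \alpha + 2))$ once one cuts off at a natural scale compatible with the Fourier truncation parameter $H$. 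Finally, letting $H\to\infty$, tracking the boundary contributions (each bounded by $g(a)$ since $g$ is positive and decreasing), and re-assembling via the initial Abel-summation identity produces the claimed estimate with the two stated error terms $O(g(a)\log(\beta - \alpha + 2))$ and $O(|g'(a)|)$.
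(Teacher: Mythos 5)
Your overall architecture (Euler--Maclaurin with the sawtooth, truncated Fourier expansion, integration by parts, first-derivative test for off-range frequencies, Abel summation for the weight $g$) is a reasonable classical template, but the step that actually carries the content of the lemma has a genuine gap: the localization to $\alpha-\eta<h<\beta+\eta$ with error only $O(\log(\beta-\alpha+2))$. For $h$ outside that range the first-derivative test does give $\int_a^b e(f(u)-hu)\,du \ll 1/\operatorname{dist}(h,[\alpha,\beta])$, but summing these bounds in absolute value over $|h|\le H$ produces two harmonic tails of total size $\asymp \log H$, and $H$ cannot be kept small: to control the truncated-sawtooth remainder $R_H$ (or to justify the exchange of sum and integral) you need $H\to\infty$, or at least $H$ large compared with $b-a$. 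There is no ``natural scale compatible with $H$'' at which the tail becomes $O(\log(\beta-\alpha+2))$; in the very application in this paper one has $b-a\asymp N$ while $\beta-\alpha\asymp\log N$, so your bound would give $O(\log N)$ where the lemma asserts $O(\log\log N)$, which is exactly the difference the authors need. A related (smaller) untreated point: the boundary terms of size $1/|h|$ produced by your integration by parts also sum to $\log H$ if taken with absolute values; they must be recombined over $h$ into bounded partial sums of the sawtooth series (the same cancellation the authors use later via $\sum_{0<|h|\le H}h^{-1}=0$). Finally, the Abel-summation reduction leaves the $h$-range for $S(x)$ depending on $x$ (it is $(f'(x)-\eta,\beta+\eta)$), so reassembling a single $x$-independent range and extracting the error $O(|g'(a)|)$ still needs an argument.

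For comparison, the paper does not prove this statement at all: it is quoted verbatim as Lemma 4.10 of Titchmarsh, whose proof avoids your divergent tail by a different mechanism -- first a unit-range lemma (the analogue with $f'$ confined to an interval of length less than $1$, where the off-range frequencies are summable because the numerator in the first-derivative bound is $<1$), then a subdivision of $(a,b)$ according to the integer values taken by $f'$, with the weighted case handled by the weighted exponential-integral estimates; the factor $\log(\beta-\alpha+2)$ arises from that subdivision, not from an absolute summation over all Fourier modes. To repair your write-up you would either need to import that subdivision argument or exploit cancellation in the $h$-sum (e.g.\ bounding $\sum_h e(-hu)$ by $\min(H,\|u\|^{-1})$ and controlling the measure where $\|u\|$ is small), which is substantially more work than the sketch acknowledges.
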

\begin{proof}
See  \cite[Lemma~4.10]{Titchmarsh}.
\end{proof}

We ignore all the errors and the difference between $\sum_{a<n\leq b}$ and $\sum_{a\leq n\leq b}$ in Lemma~\ref{Lemma:sum-to-integral} for the moment. We now choose $\eta=0$ as in Lemma~\ref{Lemma:sum-to-integral}. It is wrong since $\eta$ should be positive, but we do not pay attention here. By simple calculations, we have 
\begin{equation}\label{eq:derivF}
F'_m(u)= d \log(m/(du))+\theta.
\end{equation}
Thus,  Lemma~\ref{Lemma:sum-to-integral} with $f(x)\coloneqq F_m(x)$, $g(x)\coloneqq x^{-1/2}$, $a\coloneqq 1$, and $b\coloneqq N$ implies that 
\begin{align}\label{equation:S_m-approx}
S_m(N)\sim   \sum_{\alpha <h \leq \beta} \int_{1}^N  \frac{e(F_m(x) -hx )}{x^{1/2}} dx,
\end{align}
where $\beta=\beta(m)=F_m'(1)$, $\alpha=\alpha(m)=F_m'(N)$. Let $\xi= \xi_{m,h}$ be a real number satisfying $F_m'(\xi)=h$. Then \eqref{eq:derivF} leads to
\[
\xi=\xi_{m,h} = \frac{m}{d}e^{(\theta-h)/d}.
\]
We note that $F_m'(\xi)-h=0$ by choice of $\xi$. If $\xi_{m,h}\in (1,N)$, then by applying the stationary phase integral \cite[(3)]{Huxley}, we have
\begin{equation}\label{eq:SPI1}
 \int_{1}^N  \frac{e(F_m(x) -hx )}{x^{1/2}} dx\sim \frac{e(F_m(\xi) -h\xi-1/8)}{\xi^{1/2} |F_m''(\xi)|^{1/2}  }
\end{equation}

By \eqref{eq:derivF},  $F'_m(\xi)= h$, and $F_m(u)=uF'_m(u)+du$, we have
\begin{gather*}
 F_m(\xi)-h \xi=  d \xi,\quad F''_m(\xi)= -d/\xi,\\
\beta(m)= d\log(m/d)+\theta,\quad \alpha(m)=  d\log(m/(dN))+\theta.
\end{gather*}
Therefore, by combining \eqref{equation:S_m-approx} and \eqref{eq:SPI1}, we obtain
\begin{equation}\label{eq:sketch5}
S_m(N)\sim   \sum_{\alpha(m) <h \leq \beta(m)} \frac{e(-1/8)e(m e^{(\theta-h)/d})}{d^{1/2}  }.
\end{equation}
We note that \eqref{eq:SPI1} is available only in the case $\xi_{m,h}\in (1,N)$, but we now skip to care the cases $\xi\leq 1$ or $\xi\geq N$, and we suppose that \eqref{eq:SPI1} is always true.  Combining \eqref{eq:sketch2} and \eqref{eq:sketch5}, we obtain 
\begin{align*}\label{equation:strategy2}
\sum_{n=1}^N \zeta (-k+2\pi idn) \frac{e(\theta n)}{n^{k+1}} &\sim d^k \sum_{m=1}^\infty \frac{1}{m^{k+1}} \sum_{\alpha(m) < h\leq \beta(m)} e\left(m e^{(\theta-h)/d} \right)\\ 
&=d^k \sum_{m=1}^\infty \frac{1}{m^{k+1}} \left(\sum_{\alpha(m) < h<0}+\sum_{0 \leq h\leq \beta(m)} \right)e\left(m e^{(\theta-h)/d} \right).
\end{align*}
Since $\beta (m)\ll_{d, \theta} \log (m+1)$ and $k\geq 1$, we have 
\[
d^k \sum_{m=1}^\infty \frac{1}{m^{k+1}} \sum_{0\leq h\leq \beta (m)} e(m e^{(\theta-h)/d} ) \ll_{d,k} \sum_{m=1}^\infty \frac{\log (m+1)}{m^{k+1}}  \ll_k 1.  
\]
Moreover, by interchanging the double summation over $m$ and $h$, 
\[
\sum_{n=1}^N \zeta (-k+2\pi idn) \frac{e(\theta n)}{n^{k+1}} \sim d^k \sum_{\alpha(1) < h<0 } \sum_{\substack{ 1\leq m \leq dNe^{(h-\theta)/d}  } }  \frac{e(me^{(\theta-h)/d})}{m^{k+1}}.
\]
 By the Schwarz reflection principle, $\zeta(\overline{s})=\overline{\zeta(s)}$ holds, and hence 
\[
\sum_{1\leq |n|\leq N} \zeta (-k+2\pi idn) \frac{e(\theta n)}{n^{k+1}} \sim d^k \sum_{\alpha(1) \leq h<0 } \sum_{\substack{ 1\leq |m| \leq dNe^{(h-\theta)/d}  } }  \frac{e(me^{(\theta-h)/d})}{m^{k+1}}.
\]
The Fourier expansion of $\psi_{k+1}$ (see \cite[Theorem~4.11]{ArakawaIbukiyamaKaneko}) establishes 
\[
\sum_{1\leq|m|\leq dNe^{(h-\theta)/d} } \frac{e(me^{(\theta-h)/d})}{m^{k+1}} \sim  -\frac{(2\pi i)^{k+1}}{(k+1)!} \psi_{k+1}(e^{(\theta-h)/d}).
\]
Therefore, by replacing $h$ with $-h$, we conclude that
\[
\frac{(k+1)!}{(2\pi i)^{k+1}}\sum_{1\leq |n|\leq N} \zeta (-k+2\pi idn) \frac{e(\theta n)}{n^{k+1}} \sim -d^k \sum_{0<h<d\log N} \psi_{k+1}( e^{(\theta+ h)/d}).
\]
In the case $k=0$, the above discussion gets much more difficult because the series in \eqref{equation:strategy1} does not converge absolutely. Thus, we will do the above discussion much more carefully.

We organize the remainder of the article as follows. In Section~\ref{section:Applying_the_functional_equation}, we apply the functional equation for the Riemann zeta function. In Section~\ref{section:Applying_the_Euler-Maclaurin_formula}, we translate from a double summation to a superposition of a single summation and a single integration using the Euler-Maclaurin formula. In Section~\ref{section:Restricting_the_ranges_of_summation_and_integration_I} and \ref{section:Restricting_the_ranges_of_summation_and_integration_II}, we restrict the ranges of the summation and integration. In Section~\ref{section:Applying_the_stationary_phase_method}, we calculate the integration by applying the stationary phase method. At last, we complete the proof of Theorems~\ref{Theorem-Bernoulli0} and \ref{Theorem-Bernoulli1} in Section~\ref{section:Proof_of_Theorems_2.1_and_2.2}.

\section{Applying the functional equation}\label{section:Applying_the_functional_equation}
Let $\theta$,  $\sigma$, and $d>0$ be fixed real numbers with $\sigma\leq 0$ and $d>0$. We consider the parameters $\theta$, $\sigma$, and $d$ as constants. Thus, we omit the dependencies of these parameters. In this section, we aim to give a proof of the following proposition which is a precise form of \eqref{eq:sketch2}.

\begin{proposition}\label{Proposition:MeanValueStep1}
There exists $C>1$ such that for all $M,N\in \mathbb{N}$ with $M \geq CN$, 
\[
\sum_{n=1}^N \zeta(\sigma+2\pi idn)\cdot \frac{e(\theta n)}{n^{1-\sigma}} = D \sum_{m=1}^M \frac{1}{m^{1-\sigma}} \sum_{n=1}^N \frac{e(F_m(n)) }{n^{1/2}} + O(1),   
\]
where  $D= e(1/8) d^{1/2-\sigma}$ and $F_m(x)$ is defined in \eqref{defi:Fm(x)}.
\end{proposition}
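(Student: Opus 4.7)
The plan is to apply the functional equation from Lemma~\ref{Lemma:FunctionalEquation}, expand $\chi$ asymptotically, and replace $\zeta(1-\sigma-2\pi idn)$ by its truncated Dirichlet sum $\sum_{m\le M}m^{-(1-\sigma-2\pi idn)}$ plus an explicit Euler--Maclaurin remainder. Substituting these into $\sum_{n=1}^{N}\zeta(\sigma+2\pi idn)\,e(\theta n)/n^{1-\sigma}$ and simplifying will produce the claimed main term together with several error pieces that must each be shown to be $O(1)$.

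Writing $w=1-\sigma-2\pi idn$, Lemma~\ref{Lemma:FunctionalEquation} gives
\[
\chi(\sigma+2\pi idn)=e(1/8)(dn)^{1/2-\sigma}e^{2\pi idn\log(e/(dn))}\bigl(1+O(1/n)\bigr),
\]
and a generic Dirichlet summand $m^{-w}=m^{\sigma-1}e^{2\pi idn\log m}$, once multiplied by $\chi(\sigma+2\pi idn)\,e(\theta n)/n^{1-\sigma}$, collapses: the power prefactors $(dn)^{1/2-\sigma}/n^{1-\sigma}$ yield $d^{1/2-\sigma}n^{-1/2}$, and the three exponents add to $2\pi F_m(n)$ exactly, so the summand becomes $D\,e(F_m(n))/(n^{1/2}m^{1-\sigma})$. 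Summing over $m\le M$ thus reproduces the stated right-hand side, and all the work is in bounding the errors.

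The errors come from (i) the $O(1/n)$ factor in the $\chi$-asymptotic, and (ii) the Euler--Maclaurin remainder
\[
\zeta(w)-\sum_{m=1}^{M}m^{-w}=\frac{M^{1-w}}{w-1}-\frac{M^{-w}}{2}-w\int_M^{\infty}\psi_1(x)\,x^{-w-1}\,dx.
\]
For (i), using $|\zeta(1-\sigma-2\pi idn)|\ll\log n$ and $|\chi(\sigma+2\pi idn)|\ll n^{1/2-\sigma}$, the per-$n$ contribution is $O(n^{-3/2}\log n)$, which sums to $O(1)$. In (ii), the first boundary term has magnitude $M^{\sigma}/n$, giving $O(M^{\sigma}\sum_{n}n^{-3/2})=O(1)$; the second boundary term produces $O(M^{\sigma-1}N^{1/2})$, which is $O(1)$ precisely because $M\ge CN$ (with $C$ large enough to absorb the implicit constants).

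The hard part will be the integral remainder of (ii): the trivial bound $|{-w\int_M^\infty\psi_1(x)x^{-w-1}\,dx}|\ll nM^{\sigma-1}$ only gives $O(N^{3/2}M^{\sigma-1})$ after $n$-summation, which fails to be $O(1)$ in the critical case $\sigma=0$, $M=CN$. To handle it I would swap the order of $n$-summation and $x$-integration; the inner sum becomes $\sum_{n=1}^{N}((\sigma-1)+2\pi idn)\,n^{-1/2}e(F_x(n))$, and for $x\ge M\ge CN$ with $C$ chosen large in terms of $d$ and $\theta$, the derivative $F_x'(n)=d\log(x/(dn))+\theta$ is bounded below by a positive constant on $[1,N]$, so the phase $F_x$ has no stationary point in the summation range. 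A Kusmin--Landau-type bound together with partial summation against the weights $n^{\pm 1/2}$ should then yield an $O(N^{1/2})$ estimate for the inner sum uniformly for $x\ge M$, and integrating against $\psi_1(x)x^{\sigma-2}$ over $[M,\infty)$ gives a net contribution of $O(M^{\sigma-1}N^{1/2})=O(1)$, completing the proof.
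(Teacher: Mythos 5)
Your route genuinely differs from the paper's: instead of quoting the truncated Dirichlet-series approximation in the critical strip (the paper's Lemma~\ref{Lemma:zeta-critical}, Titchmarsh's Theorem~4.11, whose range of validity $|t|<2\pi M/C$ is precisely why the hypothesis $M\geq CN$ appears), you rederive the truncation via an Euler--Maclaurin remainder (essentially the paper's Lemma~\ref{Lemma:zeta-critical2}) and then must control the tail integral yourself. Your bookkeeping of the $\chi$-error and the two boundary terms is fine, and you correctly identify the tail integral as the critical piece. But the key step there is flawed: for a sum over integers $n$, a Kusmin--Landau bound needs the distance of the derivative to the nearest \emph{integer} to be bounded below, not the derivative itself, since $e(F_x(n))=e(F_x(n)-hn)$ for every $h\in\mathbb{Z}$. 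On $[1,N]$ the derivative $F_x'(u)=d\log(x/(du))+\theta$ decreases by $d\log N$, so it passes near roughly $d\log N$ integers $h$, and the shifted phases $F_x(u)-hu$ have stationary points $\xi_{x,h}=(x/d)e^{(\theta-h)/d}$ inside $[1,N]$ for all such $h$; taking $x\geq M\geq CN$ only excludes $h=0$ and boundedly many small $h$. Near each such point the first-derivative test saves nothing, and the stationary-phase contribution to $\sum_{n\le N}e(F_x(n))$ has size about $(\xi_{x,h}/d)^{1/2}$, so your claimed uniform $O(N^{1/2})$ bound for the weighted inner sum $\sum_{n\le N}((\sigma-1)+2\pi idn)n^{-1/2}e(F_x(n))$ is not justified, and in general that sum is of order $N$, not $N^{1/2}$.

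The gap is repairable, because a weaker bound suffices: since $F_x''(u)=-d/u$ is independent of $x$, the second-derivative test (the paper's Lemma~\ref{lemma:second-derivative}, i.e.\ van der Corput) applied on dyadic blocks gives $\sum_{n\leq t}e(F_x(n))\ll_d t^{1/2}$ uniformly in $x\geq M$, and partial summation against the weight $\asymp n^{1/2}$ gives the inner sum $\ll_d N$. Integrating against $|\psi_1(x)|x^{\sigma-2}$ over $[M,\infty)$ then contributes $O(NM^{\sigma-1})=O(N/M)=O(1)$ precisely because $M\geq CN$, which closes the argument even without your stronger claim. As written, though, the proposal has a genuine hole at its hardest step, and the repaired version amounts to re-proving by hand the tail estimate that the paper obtains in one line by citing Lemma~\ref{Lemma:zeta-critical}.
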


\begin{lemma}\label{Lemma:zeta-critical} We have
\[
\zeta (1-\sigma+it) =\sum_{m=1}^M \frac{1}{m^{1-\sigma+it}}  +O(M^\sigma |t|^{-1} +M^{-1+\sigma} )  
\]
uniformly for $M\in \mathbb{N}$ and $|t|< 2\pi M/C$, where $C$ is a given constant greater than $1$. 
\end{lemma}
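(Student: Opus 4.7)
The plan is to derive the estimate from the standard Euler--Maclaurin expansion of $\zeta(s)$. Writing $s = 1-\sigma + it$ we have $\Re s = 1-\sigma \geq 1$, so I would begin from the classical identity
\[
\zeta(s) = \sum_{n=1}^{M}\frac{1}{n^{s}} + \frac{M^{1-s}}{s-1} - \frac{M^{-s}}{2} - s\int_{M}^{\infty}\frac{\psi_{1}(x)}{x^{s+1}}\,dx,
\]
valid for $\Re s > 0$ and any integer $M\ge 1$, where $\psi_1(x) = \{x\} - 1/2$. Two of the three remainder terms immediately match the claimed error: $|M^{1-s}/(s-1)| = M^{\sigma}/|s-1|\le M^{\sigma}/|t|$, and $|M^{-s}/2| = M^{\sigma-1}/2$.

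The substantive step is to show that $I := s\int_M^\infty \psi_1(x)\, x^{-s-1}\,dx = O(M^{\sigma-1})$ uniformly for $|t|<2\pi M/C$. My plan is to iterate integration by parts against the higher periodic Bernoulli functions $\psi_j(x) = B_j(\{x\})$, using $\psi_j'(x)=j\,\psi_{j-1}(x)$ at non-integer $x$ together with the continuity of $\psi_j$ for $j\ge 2$. Each step produces a boundary term at $x=M$, and combining the classical estimate $|B_j(\{x\})|\le 2\,j!\,\zeta(j)/(2\pi)^{j}$ with the hypothesis $|s|\le |1-\sigma|+2\pi M/C$ shows that the $j$-th boundary term has magnitude at most $(\mathrm{const})\cdot (2\pi/C)^{j-1}/j!\cdot M^{\sigma-2}$. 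The geometric decay in $j$ makes the series of boundary terms converge absolutely to a quantity of size $O(M^{\sigma-2})$, uniformly in $t$; the role of the hypothesis $C>1$ is that the remainder after $J$ steps is bounded by $O(M^{\sigma-1}/C^{J})$, which tends to $0$ as $J\to\infty$ and so validates the infinite expansion. Multiplying the resulting $O(M^{\sigma-2})$ by $|s|\ll M$ produces $I = O(M^{\sigma-1})$, as required.

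The main obstacle I anticipate is the bookkeeping of the implicit constants in the Bernoulli integration-by-parts expansion, in particular verifying that the geometric factors $(2\pi/C)^{j}/j!$ are truly uniform in $t$ across the admissible range $|t|<2\pi M/C$ and that the classical $\psi_j$ bounds do not spoil this geometric behavior. As a more economical alternative, this estimate is essentially Theorem~4.11 of Titchmarsh's \emph{The Theory of the Riemann Zeta-Function}---the reference already used in the paper for Lemma~\ref{Lemma:sum-to-integral}---and one may simply invoke it to obtain $\zeta(s) = \sum_{n\le M} n^{-s} - M^{1-s}/(1-s) + O(M^{-\Re s})$ in the stated range, from which the lemma follows at once by bounding $M^{1-s}/(1-s)$ absolutely.
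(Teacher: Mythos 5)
Your proposal is correct, and your closing paragraph is exactly what the paper does: it simply cites Titchmarsh's Theorem~4.11, from which the lemma follows by bounding $|M^{1-s}/(1-s)|\le M^{\sigma}/|t|$. Your direct Euler--Maclaurin derivation is just the standard proof of that theorem; the qualitative point you identify (the $C>1$ hypothesis makes the iterated boundary terms decay like $C^{-j}$, so the series converges and the remainder vanishes) is the right mechanism, even if the precise exponent in your intermediate bound $(2\pi/C)^{j-1}/j!$ is not quite what drops out after the Bernoulli estimate cancels the factorials --- the decay is geometric in $C^{-j}$, not super-exponential --- but this discrepancy is harmless and does not affect the conclusion.
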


\begin{proof} See \cite[Theorem~4.11]{Titchmarsh}.  
\end{proof}

\begin{remark}
Lemma~\ref{Lemma:zeta-critical} is trivial if $\sigma<0$. Thus, we need the lemma only for the case $\sigma=0$ which corresponds to Theorem~\ref{Theorem-Bernoulli0}.  
\end{remark}

\begin{lemma}\label{Lemma:zeta(1+it)}
    For $|t|\geq1$, we have
    \begin{align*}
        \zeta(1-\sigma+it)\ll
        \begin{cases}
            \log^{2/3} (|t|+1)  \quad & \text{if } \sigma=0,\\
            1  \quad & \text{if } \sigma<0.
        \end{cases}
    \end{align*}
\end{lemma}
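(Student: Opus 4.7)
The plan is to treat the two cases $\sigma<0$ and $\sigma=0$ separately, since their difficulty differs markedly. For $\sigma<0$ the bound is elementary: the real part satisfies $\mathrm{Re}(1-\sigma+it)=1-\sigma>1$, so the Dirichlet series for $\zeta$ converges absolutely on this vertical line, giving
\[
|\zeta(1-\sigma+it)|\leq \sum_{n=1}^\infty n^{-(1-\sigma)}=\zeta(1-\sigma)\ll_\sigma 1
\]
uniformly in $t$, which yields the claim.

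For $\sigma=0$, the estimate $\zeta(1+it)\ll \log^{2/3}(|t|+1)$ is the classical Vinogradov--Korobov bound on the $1$-line. My approach would be to cite this directly from a standard reference such as Titchmarsh's \emph{The Theory of the Riemann Zeta-Function} (Chapter 6) or Ivi\'c's monograph, where it is derived via Vinogradov's mean value theorem. A more self-contained route would apply Lemma~\ref{Lemma:zeta-critical} with $\sigma=0$ and $M\asymp |t|$ to obtain $\zeta(1+it)=\sum_{m=1}^M m^{-1-it}+O(|t|^{-1})$, reducing the problem to bounding this partial sum. Partial summation together with a dyadic decomposition reduces this further to exponential sum estimates of the shape $\sum_{M_1<m\leq 2M_1} m^{-it}\ll M_1 (\log M_1)^{-1/3}$ in the relevant ranges of $M_1$, which are controlled by Vinogradov's method.

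The main obstacle is, of course, the $\sigma=0$ case: it genuinely requires deep exponential sum estimates and cannot be obtained from elementary considerations. Because these estimates are well-documented and the bound on the $1$-line is standard, the most efficient route is to invoke the Vinogradov--Korobov bound as a black box rather than reproving it; this keeps the section short and is consistent with how the paper handles other standard analytic input such as Lemma~\ref{Lemma:FunctionalEquation} and Lemma~\ref{Lemma:zeta-critical}.
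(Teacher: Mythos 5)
Your proposal matches the paper's approach: the paper simply cites the Vinogradov--Korobov bound (from Karatsuba--Voronin, Chapter IV, Section 2, Corollary 1) as a black box, exactly as you recommend, and the $\sigma<0$ case is handled by the elementary absolute-convergence observation you give.
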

\begin{proof}
See \cite[Chapter IV, Section 2, Corollary 1]{Karatsuba-Voronin}.
\end{proof}

\begin{proof}[Proof of Proposition~\ref{Proposition:MeanValueStep1}] 
Applying Lemma~\ref{Lemma:FunctionalEquation} (the functional equation of the Riemann zeta function) and Lemma~\ref{Lemma:zeta(1+it)}, we obtain 
\begin{align*}
\zeta(\sigma+2\pi idt) &= (d t)^{1/2-\sigma-2\pi idt} e(dt+1/8) \zeta (1-\sigma-2\pi idt) \left(1+O\left(\frac{1}{|t|}\right)\right)\\
&= D t^{1/2-\sigma} e\left(dt\log\left(\frac{e}{dt}\right)   \right) \zeta (1-\sigma-2\pi idt) +O(|t|^{-1/2-\sigma}\log^{2/3} (|t|+1)),
\end{align*}
where $D=d^{1/2-\sigma}e(1/8)$. Therefore, if $M\geq C d N+1$, then Lemma~\ref{Lemma:zeta-critical} implies that 
\begin{align*}
\zeta(\sigma+2\pi idt) &=D t^{1/2-\sigma} e\left(dt\log\left(\frac{e}{dt}\right)   \right) \sum_{m=1}^M \frac{1}{m^{1-\sigma-2\pi id t }} \\
&\quad+O(|t|^{1/2-\sigma} M^{-1+\sigma} +  |t|^{-1/2-\sigma}\log^{2/3} (|t|+1))
\end{align*}
uniformly for $|t|\leq 2\pi dN$. 
Therefore, we obtain 
\begin{equation}\label{equation:FE2}
\sum_{n=1}^N \zeta(\sigma+2\pi idn)\cdot \frac{e(\theta n)}{n^{1-\sigma}} = D \sum_{m=1}^M \frac{1}{m^{1-\sigma}}\sum_{n=1}^N \frac{e(F_m(n))}{n^{1/2}} + O(N^{1/2}M^{-1+\sigma}+ 1 ), 
\end{equation}
By \eqref{equation:FE2} and $M\gg N$, we conclude Proposition~\ref{Proposition:MeanValueStep1}.  
\end{proof}

We now choose $M\in \mathbb{N}$ as a parameter satisfying that $C_1N\leq  M \leq C_2 N$, where $C_1$ and $C_2$ are sufficiently large positive constants depending only on $\theta$, $\sigma$, and $d$.

\section{Applying the Euler-Maclaurin formula}\label{section:Applying_the_Euler-Maclaurin_formula}

In this section, we aim to show the following proposition. 

\begin{proposition}\label{Proposition:MeanValueStep2}
For all $m,M,N\in \mathbb{N}$ with $m\leq M$ and $C_1 N\leq M\leq C_2 N$, we have 
\begin{align*}
\sum_{n=1}^N \frac{e(F_m(n) )}{n^{1/2}}
=  \int_{1}^N \frac{e(F_m(u))}{u^{1/2}} du 
+  \sum_{h\neq 0} \frac{1}{h} \int_{1}^N \frac{F_m'(u)}{u^{1/2}}  e(F_m(u)-hu) du + R_m,
\end{align*}
where we define $\sum_{h\neq 0} = \lim_{H\to \infty} \sum_{1\leq |h|\leq H}$. Further, $R_m$ satisfies $\sum_{m=1}^M R_m /m^{1-\sigma} \ll 1$. 
\end{proposition}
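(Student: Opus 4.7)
I would derive the identity from the first-order Euler--Maclaurin summation formula
\[
\sum_{n=1}^N f(n) = \int_1^N f(u)\,du + \frac{f(1)+f(N)}{2} + \int_1^N \psi_1(u)\,f'(u)\,du
\]
applied to $f(u) = e(F_m(u))/u^{1/2}$. A direct computation gives $f'(u) = \bigl(2\pi i F_m'(u) - 1/(2u)\bigr)\,f(u)$, so the remainder integral splits as
\[
2\pi i \int_1^N \psi_1(u)\,\frac{F_m'(u)\,e(F_m(u))}{u^{1/2}}\,du \;-\; \frac{1}{2}\int_1^N \psi_1(u)\,\frac{e(F_m(u))}{u^{3/2}}\,du.
\]
Substituting the Fourier expansion $\psi_1(u) = \tfrac{1}{2\pi i}\sum_{h\neq 0} e(-hu)/h$ into the first of these integrals and interchanging sum with integral produces exactly the desired main term $\sum_{h\neq 0} \tfrac{1}{h} \int_1^N \tfrac{F_m'(u)}{u^{1/2}} e(F_m(u)-hu)\,du$. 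The interchange is legitimate because the partial sums $\sum_{h=1}^H \sin(2\pi hu)/h$ are uniformly bounded in both $u$ and $H$ (a classical fact about the conjugate Dirichlet kernel), so dominated convergence applies against the smooth weight $F_m'(u)\,e(F_m(u))/u^{1/2}$. The remaining pieces form
\[
R_m = \frac{f(1)+f(N)}{2} - \frac{1}{2}\int_1^N \psi_1(u)\,\frac{e(F_m(u))}{u^{3/2}}\,du.
\]

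\textbf{Bounding $\sum_{m\leq M}R_m/m^{1-\sigma}$.} The crucial observation is the factorisation
\[
\frac{e(F_m(u))}{m^{1-\sigma}} = e\bigl(du - du\log(du) + \theta u\bigr)\,m^{-(1-\sigma)+2\pi i du},
\]
which reduces everything to controlling the Dirichlet partial sum $\sum_{m=1}^M m^{-(1-\sigma)+2\pi i du}$ uniformly in $u \in [1,N]$. For $\sigma < 0$ this is absolutely bounded. For $\sigma = 0$, provided $C_1$ in the hypothesis $C_1 N \leq M \leq C_2 N$ is chosen large enough depending on $d$, the condition $|t| < 2\pi M/C$ of Lemma~\ref{Lemma:zeta-critical} holds with $t = 2\pi du$ uniformly over $u \in [1,N]$; combining that lemma with Lemma~\ref{Lemma:zeta(1+it)} yields $|\sum_{m\leq M} m^{-1+2\pi i du}| \ll \log(u+2)$. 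Since the $m$-sum is finite, I may swap it freely with the $u$-integral in the integral part of $R_m$, obtaining a contribution of at most $\int_1^N u^{-3/2}\log(u+2)\,du \ll 1$. The two boundary pieces are handled identically with $t=2\pi d$ and $t=2\pi dN$, where the factor $N^{-1/2}$ in $f(N)$ absorbs any growth.

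\textbf{Main obstacle.} The most delicate point is calibrating the constant $C_1$ so that Lemma~\ref{Lemma:zeta-critical} applies uniformly across $u \in [1,N]$: the lemma requires $M > C|t|/(2\pi)$, which for $t = 2\pi du$ reaching up to $2\pi dN$ forces $C_1 \geq Cd$, and this is the ultimate source of the dependence of $C_1, C_2$ on the parameters $d,\sigma,\theta$. Apart from this calibration, the only genuinely delicate technical step is the interchange of the conditionally convergent Fourier sum of $\psi_1$ with the integral against $F_m'(u)\,e(F_m(u))/u^{1/2}$, but this is resolved cleanly by the uniform-boundedness + dominated-convergence argument above. All other estimates are just absolute-value bounds combined with the integrable $u^{-3/2}$ or $N^{-1/2}$ decay.
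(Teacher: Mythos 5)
Your proof is correct and takes essentially the same route as the paper. You apply Euler--Maclaurin to $f(u)=e(F_m(u))/u^{1/2}$, split $f'$ into the $F_m'$ piece and the $-1/(2u)$ piece, set $R_m$ to be the boundary terms plus the $u^{-3/2}$ integral, interchange the Fourier series of $\psi_1$ with the integral via uniform boundedness of the partial sums and bounded/dominated convergence, and then bound $\sum_m R_m/m^{1-\sigma}$ by performing the finite $m$-sum first and using the factorization $e(F_m(u))/m^{1-\sigma} = (\text{unimodular in }m)\cdot m^{-(1-\sigma)+2\pi i du}$ together with Lemmas~\ref{Lemma:zeta-critical} and~\ref{Lemma:zeta(1+it)}. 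The paper isolates this last step as Lemma~\ref{Lemma:e-zeta}; the only cosmetic difference is that you write a $\log(u+2)$ bound for the inner $m$-sum rather than the paper's sharper $\log^{2/3}(u+1)$, which is immaterial here since $\int_1^N u^{-3/2}\log(u+2)\,du\ll 1$ in any case, and you correctly identify the calibration of $C_1$ against the $|t|<2\pi M/C$ hypothesis of Lemma~\ref{Lemma:zeta-critical} as the reason $C_1$ must depend on $d$.
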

Let $\psi(u)=\psi_1(u)$ for all $u\in \mathbb{R}$. 
\begin{lemma}[the Euler-Maclaurin formula]\label{Lemma:EM-formula} For all $N\in \mathbb{N}$ and for all continuously differentiable functions $f$ defined on $[1,N]$, we have 
\[
\sum_{n=1}^N f(n) = \int_{1}^N f(u) du + \int_{1}^N \psi(u) f'(u) du + \frac{1}{2} (f(N)+f(1)). 
\]
\end{lemma}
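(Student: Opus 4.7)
The plan is to prove the formula by a straightforward integration by parts of the right-hand integral $\int_1^N \psi(u) f'(u)\,du$, exploiting the fact that on each open unit interval $(n,n+1)$ the function $\psi(u)=\{u\}-1/2$ is smooth with $\psi'(u)=1$. Concretely, I would first split
\[
\int_1^N \psi(u) f'(u)\,du = \sum_{n=1}^{N-1} \int_n^{n+1} \psi(u) f'(u)\,du,
\]
which is legitimate because $\psi$ differs from the continuous function $u\mapsto u-n-1/2$ on $[n,n+1]$ only at the two endpoints (a set of measure zero), and $\psi$ is bounded so the values at integers are irrelevant for Lebesgue (or improper Riemann) integration.

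Next, on each open interval $(n,n+1)$ write $\psi(u)=u-n-\tfrac12$ and apply integration by parts with $f'$ against this antiderivative-of-$1$. This yields
\[
\int_n^{n+1} \psi(u) f'(u)\,du = \Bigl[(u-n-\tfrac12)f(u)\Bigr]_{n}^{n+1} - \int_n^{n+1} f(u)\,du = \tfrac12 f(n+1)+\tfrac12 f(n) - \int_n^{n+1} f(u)\,du,
\]
using the one-sided limits of $\psi$ at the endpoints, namely $\psi(n^+)=-1/2$ and $\psi((n+1)^-)=1/2$. Summing over $n=1,\dots,N-1$ gives
\[
\int_1^N \psi(u) f'(u)\,du = \tfrac12\sum_{n=1}^{N-1}\bigl(f(n)+f(n+1)\bigr) - \int_1^N f(u)\,du.
\]

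Finally, the trapezoidal sum collapses to $\sum_{n=1}^{N} f(n)-\tfrac12(f(1)+f(N))$ by re-indexing, and rearranging produces the stated identity. There is no serious obstacle here: the only delicate point is handling the discontinuity of $\psi$ at each integer, which is resolved by observing that the redefinition of $\psi$ at integers affects neither the integral nor the one-sided limits used in the boundary term, and that continuous differentiability of $f$ ensures the usual integration-by-parts identity on each closed subinterval $[n,n+1]$.
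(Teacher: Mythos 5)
Your proof is correct. The paper does not argue this lemma at all; it simply cites \cite[Theorem~5.1]{ArakawaIbukiyamaKaneko}, so what you have written is the standard self-contained argument that the cited reference carries out: split $\int_1^N \psi(u)f'(u)\,du$ over the unit intervals $[n,n+1]$, replace $\psi$ there by the polynomial $u-n-\tfrac12$ (legitimate since changing the integrand at the finitely many integer points does not affect the integral), integrate by parts to produce the trapezoidal boundary terms $\tfrac12 f(n)+\tfrac12 f(n+1)$ minus $\int_n^{n+1} f$, and telescope. Your handling of the discontinuity of $\psi$ at integers and of the one-sided values $\psi(n^+)=-\tfrac12$, $\psi((n+1)^-)=\tfrac12$ is exactly the delicate point and you resolve it correctly; note also that the degenerate case $N=1$ holds trivially, since then both integrals vanish and $\tfrac12(f(1)+f(1))=f(1)$, consistent with your empty sum over $n=1,\dots,N-1$. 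So the only difference from the paper is that you prove the lemma rather than outsource it, which makes the presentation self-contained at essentially no cost.
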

\begin{proof}
See \cite[Theorem~5.1]{ArakawaIbukiyamaKaneko}. 
\end{proof}

\begin{lemma}\label{Lemma:e-zeta}
For all $M,N\in \mathbb{N}$ with $C_1 N\leq M\leq C_2 N$ and $u\in [1,N]$, we have 
\[
\sum_{m=1}^M \frac{e(F_m(u)) }{m^{1-\sigma}} \ll_\sigma
\left\{
\begin{array}{ll}
 \log^{2/3} (u+1) &\text{if } \sigma=0, \\
 1 	&\text{otherwise}.
\end{array}\right.
\]
\end{lemma}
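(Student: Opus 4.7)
The plan is to reduce the weighted exponential sum to a truncated partial sum of the Riemann zeta function and then invoke Lemmas~\ref{Lemma:zeta-critical} and \ref{Lemma:zeta(1+it)}. The key observation is that $F_m(u)$ separates cleanly in $m$: expanding
\[
F_m(u)=du\log m+du-du\log(du)+\theta u
\]
shows that the only $m$-dependent factor of $e(F_m(u))$ is $e(du\log m)=m^{2\pi idu}$, while the remaining factor has modulus one. Hence
\[
\left|\sum_{m=1}^M \frac{e(F_m(u))}{m^{1-\sigma}}\right|=\left|\sum_{m=1}^M \frac{1}{m^{1-\sigma-2\pi idu}}\right|,
\]
so the problem is reduced to estimating a truncated Dirichlet series for $\zeta(1-\sigma-2\pi idu)$.

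Next, with $t=-2\pi du$, I would apply Lemma~\ref{Lemma:zeta-critical}. Its hypothesis $|t|<2\pi M/C$ reads $du<M/C$; this is ensured by choosing $C_1$ sufficiently large (e.g.\ $C_1\ge Cd$), since $u\le N$ and $M\ge C_1 N$. The lemma then yields
\[
\sum_{m=1}^M \frac{1}{m^{1-\sigma-2\pi idu}}=\zeta(1-\sigma-2\pi idu)+O\!\left(\frac{M^{\sigma}}{du}+M^{-1+\sigma}\right).
\]
Because $u\ge1$, $\sigma\le 0$, $d$ is a fixed positive constant, and $M\asymp N$, the error term is $O_{\sigma}(1)$, which is already absorbed in both cases of the target bound.

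Finally, I would estimate $|\zeta(1-\sigma-2\pi idu)|$. Since $\zeta(\overline{s})=\overline{\zeta(s)}$, it suffices to bound $|\zeta(1-\sigma+2\pi idu)|$. When $2\pi du\ge 1$, Lemma~\ref{Lemma:zeta(1+it)} gives the desired bound directly: $\log^{2/3}(2\pi du+1)\ll \log^{2/3}(u+1)$ when $\sigma=0$, and $O(1)$ when $\sigma<0$. When $2\pi du<1$, which can occur only if $d<1/(2\pi)$, the variable $u$ is confined to the compact interval $[1,1/(2\pi d))$, and $|t|=2\pi du$ stays bounded below by $2\pi d>0$. Thus $\zeta(1-\sigma-2\pi idu)$ is continuous on a compact set avoiding the pole $s=1$, hence bounded by a constant depending only on $\sigma$ and $d$; since $\log^{2/3}(u+1)\gg 1$ for $u\ge1$, this implies both claimed bounds trivially.

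The main technical care is twofold: verifying the hypothesis $|t|<2\pi M/C$ of Lemma~\ref{Lemma:zeta-critical} via a suitable choice of $C_1$, and handling the small-$|t|$ range near the pole of $\zeta$ at $s=1$. Both are essentially bookkeeping once the factorization $e(F_m(u))=m^{2\pi idu}\cdot(\text{unit})$ is noticed; no stationary-phase or oscillatory estimate on the $m$-sum is needed here, because the sum over $m$ is exactly a Dirichlet polynomial and the required zeta bound is already at our disposal.
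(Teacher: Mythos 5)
Your proposal is correct and follows essentially the same route as the paper: you factor $e(F_m(u)) = m^{2\pi i d u}\cdot(\text{a unit independent of }m)$, recognize the resulting $m$-sum as a Dirichlet polynomial for $\zeta(1-\sigma-2\pi idu)$, and then combine Lemmas~\ref{Lemma:zeta-critical} and~\ref{Lemma:zeta(1+it)}. The only differences from the paper's proof are cosmetic refinements on your side: the paper disposes of the case $\sigma<0$ immediately via the triangle inequality (since $\sum m^{-1+\sigma}$ converges), whereas you run the zeta-bound argument in both cases; and you explicitly patch the regime $2\pi du<1$ (where Lemma~\ref{Lemma:zeta(1+it)} as stated does not apply) by a compactness argument, a small gap the paper leaves implicit. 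Both are harmless, and your verification of the hypothesis $|t|<2\pi M/C$ via the choice of $C_1$ matches the paper's set-up.
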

\begin{proof}
It suffices to show the case when $\sigma=0$ because $\sigma$ is not positive. By the definition of $F_m(u)$, we observe that 
\[
\left|\sum_{m=1}^M \frac{e(F_m(u)) }{m}\right|= \left| \sum_{m=1}^M \frac{e(d u \log m ) }{m}\right| = \left| \sum_{m=1}^M \frac{1}{m^{1+2\pi idu}}\right|.     
\]
Lemmas~\ref{Lemma:zeta-critical} and \ref{Lemma:zeta(1+it)} imply that 
\[
\sum_{m=1}^M \frac{1}{m^{1+2\pi idu}}=  \zeta(1+2\pi i du) + O(+u^{-1}+M^{-1} ) \ll \log^{2/3} (u+1) .  
\]
\end{proof}

\begin{proof}[Proof of Proposition~\ref{Proposition:MeanValueStep2}] 
Lemma~\ref{Lemma:EM-formula} with $f(n)\coloneqq e(F_m(n))/n^{1/2}$ leads to 
\begin{align*}
\sum_{n=1}^N \frac{e(F_m(n) )}{n^{1/2}}&= \int_{1}^N \frac{e(F_m(u))}{u^{1/2}}  du + \int_{1}^N \psi(u) \frac{d}{du} \left( \frac{e(F_m(u))}{u^{1/2}} \right) du \\
&\quad\quad + \frac{1}{2} \left( e(F_m(1)) + \frac{e(F_m(N)) }{N^{1/2}}  \right).  
\end{align*}
The second integral is equal to 
\[
\int_1^N \psi (u) \left(-\frac{1}{2} \frac{e(F_{m}(u) )}{u^{3/2}} + 2\pi i \frac{F'_m(u)}{u^{1/2}}  e(F_m(u)) \right) du. 
\]
By setting 
\[
R_m=- \frac{1}{2} \int_{1}^N \frac{\psi (u)}{u^{3/2}} e(F_m(u)) du   +\frac{1}{2} \left( e(F_m(1)) + \frac{e(F_m(N)) }{N^{1/2}}  \right),    
\]
we obtain 
\begin{equation}\label{Equation:Sum-e(Fm(n))}
\sum_{n=1}^N \frac{e(F_m(n) )}{n^{1/2}}=  \int_{1}^N \frac{e(F_m(u))}{u^{1/2}}  du +  \int_1^N \psi (u)\cdot 2\pi i \frac{F'_m(u)}{u^{1/2}}  e(F_m(u)) du+R_m. 
\end{equation}
Since $\psi(u)= \sum_{h\neq 0} e(-hu)/(2\pi i h)$ and the Fourier series converges boundedly, by applying the bounded convergence theorem, the second integral on the right-hand side of \eqref{Equation:Sum-e(Fm(n))} is equal to
\[
\sum_{h\neq 0} \frac{1}{h} \int_{1}^N \frac{F_m'(u)}{u^{1/2}}  e(F_m(u)-hu) du
\]
Moreover, if $\sigma=0$, then Lemma~\ref{Lemma:e-zeta} implies that 
\begin{align*}
\sum_{m=1}^M \frac{R_m}{m^{1-\sigma}} 
&=-\frac{1}{2}\int_1^N \frac{\psi(u)}{u^{3/2}}\sum_{m=1}^M \frac{e(F_m(u))}{m} du + \frac{1}{2}\sum_{m=1}^M \frac{e(F_m(1))}{m} + \frac{1}{2N^{1/2}} \sum_{m=1}^M \frac{e(F_m(N))}{m}\\
&\ll \int_{1}^N \frac{\log^{2/3} (u+1)}{u^{3/2}} du + 1 + \frac{\log^{2/3} (N+1)}{N^{1/2}} \ll 1.
\end{align*}
If $\sigma<0$, then we also have
\begin{align*}
\sum_{m=1}^M \frac{R_m}{m^{1-\sigma}} 
&\ll \int_{1}^N \frac{1}{u^{3/2}} du  + \frac{1}{N^{1/2}}  \ll 1,
\end{align*}
which completes the proof of Proposition~\ref{Proposition:MeanValueStep2}.
\end{proof}

\section{Restricting the ranges of summation and integration I}\label{section:Restricting_the_ranges_of_summation_and_integration_I}
By combining Propositions~\ref{Proposition:MeanValueStep1} and \ref{Proposition:MeanValueStep2}, for all $M, N\in \mathbb{Z}_{\ge 1}$ with $C_1N\leq M \leq C_2 N$, 
\begin{align}\label{equation:sum_integral-Euler_Maclaurin}
\sum_{n=1}^N \zeta(\sigma+2\pi idn) \frac{e(\theta n)}{n^{1-\sigma}} &= D \sum_{m=1}^M \frac{1}{m^{1-\sigma}} \int_{1}^N \frac{e(F_m(u))}{u^{1/2}} du \\
&\quad +  D \sum_{m=1}^M \frac{1}{m^{1-\sigma}} \sum_{h\neq 0} \frac{1}{h} \int_{1}^N \frac{F_m'(u)}{u^{1/2}}  e(F_m(u)-hu) du   + O(1),\notag
\end{align}
where $C_1$ and $C_2$ are some fixed constants with $C_2>C_1>1$, $D=e(1/8) d^{1/2-\sigma}$ and $F_m(u)=du\log (me/(du))+\theta u$. In this section, we aim to show Proposition~\ref{proposition:zeta-restrictH}.

\begin{lemma}[{\cite[Lemma~4.2]{Titchmarsh}}]\label{lemma:exponential-int-first}
Let $f$ and $g$ be real functions defined on $[a,b]$. Assume that $f$ and $g$ are twice differentiable on $[a,b]$, and $g/f'$ is monotonic throughout the interval $[a,b]$. Suppose that there exists $M>0$ such that for every $x\in [a,b]$, we have $|f'(x)/g(x)|\geq M$. Then 
\[
\left|\int_a^b g(x)e(f(x))dx\right| \ll \frac{1}{M}.
\]
\end{lemma}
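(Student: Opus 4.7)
The plan is to apply integration by parts after rewriting the oscillatory factor as a derivative: since $\frac{d}{dx}e(f(x)) = 2\pi i f'(x) e(f(x))$, one can recast
\begin{equation*}
g(x) e(f(x)) = \frac{g(x)}{2\pi i\, f'(x)}\cdot \frac{d}{dx}\bigl[e(f(x))\bigr].
\end{equation*}
The hypothesis $|f'(x)/g(x)|\ge M$ says $|g/f'|\le 1/M$, which is exactly the bound needed both to control the boundary term produced by integration by parts and (via the monotonicity hypothesis) the new integral that appears.

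Concretely, I would first observe that integration by parts yields
\begin{equation*}
\int_a^b g(x) e(f(x))\, dx = \frac{1}{2\pi i}\left[\frac{g(x)}{f'(x)} e(f(x))\right]_a^b - \frac{1}{2\pi i}\int_a^b \left(\frac{g(x)}{f'(x)}\right)' e(f(x))\, dx,
\end{equation*}
and then bound the boundary term by $1/(\pi M)$ using $|g/f'|\le 1/M$ at each endpoint together with $|e(f)|=1$. For the remaining integral, the monotonicity hypothesis is decisive: because $g/f'$ is monotonic on $[a,b]$, its derivative $(g/f')'$ keeps a constant sign, and therefore
\begin{equation*}
\int_a^b \left|\left(\frac{g(x)}{f'(x)}\right)'\right|\, dx = \left|\frac{g(b)}{f'(b)} - \frac{g(a)}{f'(a)}\right| \le \frac{2}{M}.
\end{equation*}
Majorising $|e(f(x))|$ by $1$ inside the remaining integral then yields $O(1/M)$, and combining with the boundary estimate gives the claim.

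There is essentially no genuine obstacle here: this is the classical first derivative test (Kuzmin--Landau style), and the argument amounts to a single integration by parts followed by an elementary telescoping estimate. The only point requiring care is verifying that the quotient $g/f'$ is well defined and differentiable enough to integrate by parts; this is ensured by the twice differentiability of $f$ and $g$ together with the non-vanishing of $f'$ forced by the hypothesis $|f'|\ge M|g|$. Passing from $\int|(g/f')'|$ to the telescoping difference is precisely the step where the monotonicity assumption is consumed, and it is what produces the clean constant $2/M$ rather than an uncontrolled total variation.
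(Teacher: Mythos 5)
Your proof is correct. The paper does not reproduce an argument for this lemma; it simply cites Titchmarsh, so there is nothing in the text to compare against verbatim. Your integration-by-parts argument is the standard modern route to the weighted first-derivative test (Kuzmin--Landau style): rewrite $g\,e(f)$ as $\tfrac{g}{2\pi i f'}\,\tfrac{d}{dx}e(f)$, integrate by parts, bound the boundary terms by $|g/f'|\le 1/M$, and use monotonicity of $g/f'$ to convert $\int|(g/f')'|$ into the telescoping quantity $|g(b)/f'(b)-g(a)/f'(a)|\le 2/M$. All the hypotheses are consumed correctly: $|f'/g|\ge M>0$ forces both $g$ and $f'$ to be nonvanishing (so the quotient is well defined), and twice differentiability makes the integration by parts legitimate. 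For the record, Titchmarsh's own proof of the (unweighted) Lemma 4.2 proceeds instead via Bonnet's form of the second mean value theorem applied to $1/f'$ against $f'\cos f$ and $f'\sin f$; that route avoids differentiating the quotient and gives a slightly different explicit constant, but the two arguments are essentially interchangeable and both yield $O(1/M)$. The one small wrinkle in your exposition is the parenthetical claim that non-vanishing of $f'$ is ``forced by $|f'|\ge M|g|$'': as written that inequality alone would permit $f'=g=0$ simultaneously; it is the hypothesis in ratio form $|f'/g|\ge M$ (which presupposes $g\ne 0$) that rules this out, and you should phrase it that way.
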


\begin{lemma}[{\cite[Lemma~4.5]{Titchmarsh}}]\label{lemma:second-derivative}
Let $f(x)$ be a real function, twice differentiable, and let $f''(x)\geq r>0$ or $f''(x)\leq r<0$, throughout the interval $[a,b]$. Suppose that $g(x)/f'(x)$ is monotonic and $|g(x)|\leq M$. Then 
\[
\int_a^b g(x) e(f(x)) \ll M r^{-1/2}.
\]
\end{lemma}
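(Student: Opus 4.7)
The plan is to prove this standard second-derivative test by splitting $[a,b]$ into a short neighborhood of the stationary point of $f$ (on which a trivial bound suffices) and its complement (on which Lemma~\ref{lemma:exponential-int-first} applies with a uniform lower bound on $|f'/g|$). Without loss of generality assume $f''(x) \geq r > 0$ on $[a,b]$; the case $f'' \leq r < 0$ reduces to this by passing to $-f$ and taking complex conjugates. Under this assumption $f'$ is strictly increasing on $[a,b]$, so the level set $E := \{x \in [a,b] : |f'(x)| < r^{1/2}\}$ is an interval (possibly empty).

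The first step is to estimate $|E|$. For any $x < y$ in $E$, the mean value theorem together with $f'' \geq r$ gives $f'(y) - f'(x) \geq r(y-x)$, while $f'(y) - f'(x) \leq |f'(x)| + |f'(y)| < 2r^{1/2}$; together these force $|E| \leq 2r^{-1/2}$. Using $|g| \leq M$ on $E$ then yields the trivial estimate
\[
\left| \int_E g(x) e(f(x))\, dx \right| \leq M \cdot |E| \leq 2M r^{-1/2}.
\]
On the complement $[a,b] \setminus E$, which is a union of at most two sub-intervals, one has $|f'(x)| \geq r^{1/2}$ and hence $|f'(x)/g(x)| \geq r^{1/2}/M$. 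Since $g/f'$ is monotonic on $[a,b]$ it remains monotonic on each component, so Lemma~\ref{lemma:exponential-int-first} applied to each piece (with its lower bound taken to be $r^{1/2}/M$) contributes $\ll M r^{-1/2}$; summing these with the bound on $E$ yields the claimed $\ll Mr^{-1/2}$.

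The only substantive calculation is the length estimate $|E| \leq 2r^{-1/2}$; everything else is routine invocation of Lemma~\ref{lemma:exponential-int-first}. The main obstacle, if any, is bookkeeping around the degenerate configurations of $E$: if $E = [a,b]$ only the trivial bound is needed; if $E = \emptyset$ then Lemma~\ref{lemma:exponential-int-first} applies directly to $[a,b]$; and on each component of $[a,b]\setminus E$ one should observe that $f'$ has constant sign there, so that $g/f'$ is unambiguously defined and monotonic. The threshold $r^{1/2}$ is chosen so that the trivial and first-derivative bounds match, producing the sharp $Mr^{-1/2}$ order.
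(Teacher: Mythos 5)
Your argument is correct: splitting $[a,b]$ at the set $E=\{x:|f'(x)|<r^{1/2}\}$, bounding $|E|\leq 2r^{-1/2}$ via $f''\geq r$, using the trivial bound $M|E|$ there, and applying the first-derivative test (Lemma~\ref{lemma:exponential-int-first}) with lower bound $r^{1/2}/M$ on the at most two remaining components is exactly the classical van der Corput proof of the second-derivative test; the reduction of the case $f''\leq r<0$ by passing to $-f$ and conjugating (legitimate since $g$ is real) and the observation that $f'$ has constant sign on each component are the right bookkeeping. The paper itself gives no proof here — it simply cites \cite[Lemma~4.5]{Titchmarsh} — and your write-up reconstructs essentially the argument found in that reference, so there is nothing to compare beyond the fact that you have made the deduction self-contained from Lemma~\ref{lemma:exponential-int-first}. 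One cosmetic mismatch worth noting: as restated in this paper, Lemma~\ref{lemma:exponential-int-first} assumes $g$ twice differentiable, a hypothesis not present in Lemma~\ref{lemma:second-derivative}; Titchmarsh's original first-derivative test (proved via the second mean value theorem rather than integration by parts) needs no differentiability of $g$, so your proof is complete either by quoting that version or by adding the harmless assumption that $g$ is smooth, as it is in every application in this paper ($g(u)=u^{-1/2}$).
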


Let $\alpha$ be a fixed constant in $(0,1]$, and let $c=c(d,\theta)$ be a sufficiently large parameter. We set $H= c \log^{1+\alpha}(MN)$.

\begin{lemma}\label{Lemma:restrict-h}
 For all $N, M\in \mathbb{Z}_{\ge 2}$ with $M \asymp N$, we have
\[
\sum_{m=1}^M \frac{1}{m^{1-\sigma}} \sum_{|h|> H } \frac{1}{h} \int_{1}^N \frac{F_m'(u)}{u^{1/2}}  e(F_m(u)-hu) du \ll_{\alpha,\sigma} \begin{cases}
\log^{1-\alpha}N & \text{if $\sigma=0$}; \\
1  & \text{if $\sigma<0$}.
\end{cases}
\]
\end{lemma}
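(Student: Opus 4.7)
The plan is to show that for each $|h| > H$ and each $m \leq M$, the inner integral
\[
I_{m,h} := \int_{1}^{N} \frac{F_m'(u)}{u^{1/2}} e(F_m(u) - hu)\, du
\]
satisfies $|I_{m,h}| \ll \log(MN)/|h|$. Once this is in hand, summing against $1/|h|$ over $|h| > H$ supplies a factor of $\log(MN)/H \asymp 1/\log^{\alpha}(MN)$, which multiplied by $\sum_{m \leq M} m^{\sigma-1}$ (of size $\log N$ when $\sigma = 0$, of size $O(1)$ when $\sigma < 0$) yields the two stated bounds at once.

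First I would record the uniform estimate $|F_m'(u)| \leq C_3 \log(MN)$ valid for $1 \leq u \leq N$ and $1 \leq m \leq M$, which follows at once from \eqref{eq:derivF} together with $M \asymp N$, the constant $C_3$ depending only on $d$ and $\theta$. Choosing the parameter $c$ in the definition of $H$ large enough in terms of $d, \theta$ forces $H \geq 4 C_3 \log(MN)$, so that the phase derivative of $F_m(u) - hu$ satisfies $|F_m'(u) - h| \geq |h|/2$ uniformly on $[1, N]$ whenever $|h| > H$.

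Next I would perform a single integration by parts on $I_{m,h}$, writing
\[
\frac{F_m'(u)}{u^{1/2}} e(F_m(u) - hu) = \frac{F_m'(u)}{u^{1/2}(F_m'(u) - h)} \cdot \frac{1}{2\pi i}\frac{d}{du}e(F_m(u) - hu).
\]
The lower bound on $|F_m'(u) - h|$ together with $|F_m'(u)| \ll \log(MN)$ immediately bounds the resulting boundary term by $O(\log(MN)/|h|)$. For the remaining integral, a direct computation using $F_m''(u) = -d/u$ shows each of the three terms composing $\frac{d}{du}[F_m'(u)/(u^{1/2}(F_m'(u) - h))]$ is bounded by $O(\log(MN)/(u^{3/2}|h|))$, and since $\int_{1}^{N} u^{-3/2}\, du = O(1)$, the claim follows.

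The main obstacle, although not deep, is to arrange the integration by parts so that the $|h|^{-1}$ decay survives. A naive split $F_m'(u) = (F_m'(u) - h) + h$ squanders it: the resulting piece $h\int_1^N e(F_m(u) - hu)\,u^{-1/2}\,du$ is only $O(1)$ because the boundary contribution from one IBP on the $h$-free integral is of size $1$. One must keep the factor $F_m'(u)/(F_m'(u) - h)$ intact, since only this ratio is genuinely of size $O(\log(MN)/|h|)$ on the whole interval, and it is that uniform smallness which supplies the needed cancellation.
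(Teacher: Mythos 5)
Your argument is correct and is essentially the paper's: there the inner integral is bounded by invoking the first-derivative test for exponential integrals (Lemma~\ref{lemma:exponential-int-first}, quoted from Titchmarsh) with $g(u)=F_m'(u)u^{-1/2}$ and $f(u)=hu-F_m(u)$, and the crucial input is the same uniform smallness of $F_m'(u)/(F_m'(u)-h)$ that you isolate, so your single integration by parts is just the proof of that lemma unrolled in this special case. A small bonus of doing the integration by parts explicitly is that it sidesteps the monotonicity hypothesis on $g/f'$ that Lemma~\ref{lemma:exponential-int-first} requires and that the paper checks for $a_h(u)=u^{-1/2}F_m'(u)/(h-F_m'(u))$.
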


\begin{proof}
 We observe that $|F'_m(u)|= |d\log(m/(du)) +\theta |\leq d\log (MN)+O_{\theta,d}(1)$. Therefore, there exists a constant $c=c(\theta,d)>0$ such that $|F'_m(u)|\leq |h|/2$ for all $|h|\geq H=c\log^{1+\alpha} (MN)$, $u\in[1,N]$, and $m\in\mathbb{Z}_{[1,M]}$. Assume that $h\geq H$. To apply Lemma~\ref{lemma:exponential-int-first}, we define
\[
a_h(u)= \frac{1}{u^{1/2}} \cdot \frac{F'_m(u)}{h-F'_m(u)}
\]
for all $u\in [1,N]$ and $m\in \mathbb{N}$. By $|F'_m(u)|\leq |h|/2$, the sign of $h-F'_m(u)$ is always positive. Further, since $F_m''(u)=-d/u$, we see that 
\[
\frac{\partial}{\partial u} \left(\frac{F'_m(u)}{h-F'_m(u)}\right)=\frac{F''_m(u) h}{(h-F'_m(u))^2}= -\frac{d h}{u(h-F'_m(u))^2}<0.
\]
Therefore, for every fixed $h\geq H$, $a_h(u)$ is monotonically decreasing on  $u\in [1,N]$, and hence we have $|a_h(u)|\leq |a_h(1)|+ |a_h(N)|$ on $u\in [1,N]$. By applying Lemma \ref{lemma:exponential-int-first} with $g(u)\coloneqq F'_m(u)u^{-1/2}$, $f(u)\coloneqq hu-F_m(u)$, and $M^{-1}=|a_h(1)|+ |a_h(N)|$, we have 
\begin{align}\label{Inequality:restrict-h-1}
&\sum_{h\ge H } \frac{1}{h} \int_{1}^N \frac{F_m'(u)}{u^{1/2}}  e(F_m(u)-hu) du  \\
&\ll \sum_{h\ge H} \frac{|a_h(1)|+ |a_h(N)|}{h}   \ll (|F_m'(1)|+|F'_m(N)|) \sum_{h\ge H}  \frac{1}{h^2}\ll \frac{1} {\log^{\alpha}N},\nonumber
\end{align}
where the last inequality follows from $|F_m'(1)|+|F_m'(N)|\ll\log(MN)$ and $H=c\log^{1+\alpha}(MN)$. In the case $h\leq -H$, we also obtain the same upper bound as the most right-hand side of \eqref{Inequality:restrict-h-1}, and hence
\begin{align*}
&\sum_{m=1}^M \frac{1}{m^{1-\sigma}} \sum_{|h|\ge H } \frac{1}{h} \int_{1}^N \frac{F_m'(u)}{u^{1/2}}  e(F_m(u)-hu) du\\
&\ll  \frac{1}{\log^{\alpha} N}\sum_{m=1}^M \frac{1}{m^{1-\sigma}}  \ll \begin{cases}
\log^{1-\alpha}N & \text{if $\sigma=0$}; \\
1  & \text{if $\sigma<0$}.
\end{cases}
\end{align*} 
\end{proof}

\begin{lemma}\label{Lemma:saw-tooth-function}
For every $K\in \mathbb{N}$, we have 
\[
\left|\sum_{k=1}^K \frac{\sin (2\pi kx)}{\pi k}+\psi(x)\right| \leq \min \left(\frac{1}{2}, \frac{1}{(2K+1)\pi |\sin \pi x|}\right).
\]
\end{lemma}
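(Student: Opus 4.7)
The plan is to view the left-hand side as the truncation error of the Fourier expansion of the sawtooth function $\psi$, represent it as an explicit oscillatory integral, and then bound this integral in two complementary ways. By $1$-periodicity, the oddness of both $\psi$ (under the convention $\psi(n)=0$) and $\sum_{k=1}^K \sin(2\pi kx)/(\pi k)$, and the evenness of $|\sin\pi x|$, I would reduce to the range $x\in(0,1/2]$; at integer values both sides vanish trivially.

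On this range, the first substantive step is to establish the integral identity
\[
\sum_{k=1}^K \frac{\sin(2\pi kx)}{\pi k} + \psi(x) = -\int_x^{1/2} \frac{\sin((2K+1)\pi t)}{\sin \pi t}\, dt.
\]
This follows by differentiation in $x$: on $(0,1)$ we have $\psi'(x)=1$, and the identity $\sum_{k=1}^K 2\cos(2\pi kx)=D_K(x)-1$ for the Dirichlet kernel $D_K(x):=\sin((2K+1)\pi x)/\sin(\pi x)$ shows that the derivatives of both sides equal $D_K(x)$; the identity is pinned down by the fact that both sides vanish at $x=1/2$ (using $\sin(k\pi)=0$ and $\psi(1/2)=0$).

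With this representation in hand, I would bound the oscillatory integral in two ways. For the bound $\frac{1}{(2K+1)\pi |\sin \pi x|}$, the second mean value theorem (Bonnet's form) applied to the positive monotonically decreasing function $1/\sin(\pi t)$ on $[x,1/2]$ yields, for some $\xi\in[x,1/2]$,
\[
\int_x^{1/2} \frac{\sin((2K+1)\pi t)}{\sin \pi t}\, dt = \frac{\cos((2K+1)\pi x)-\cos((2K+1)\pi\xi)}{(2K+1)\pi \sin \pi x},
\]
and the sharp constant $1$ comes from a careful choice of splitting that exploits $\cos((2K+1)\pi/2)=0$ (valid since $2K+1$ is odd), so that the contribution at $t=1/2$ vanishes for free. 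For the $\frac{1}{2}$ bound, I would show that the antiderivative $\int_0^x D_K(t)\,dt$ lies in $[0,1]$ for all $x\in[0,1]$: this follows from the alternating-sign structure of $D_K$ on the intervals $[j/(2K+1),(j+1)/(2K+1)]$ together with the monotonicity of $1/\sin(\pi t)$, via a standard alternating-series estimate. Combined with $\int_0^{1/2} D_K(t)\,dt=1/2$ and the integral representation above, this yields $|S_K(x)+\psi(x)|\leq 1/2$.

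The main obstacle is securing the sharp constant $1$ in the denominator of $\frac{1}{(2K+1)\pi |\sin \pi x|}$: a naive application of Abel summation or the mean value theorem produces a constant of $2$ (both cosine terms contributing separately), and obtaining the optimal constant requires exploiting the specific cancellation at the endpoint $t=1/2$ coming from $2K+1$ being odd, or equivalently a tailored integration-by-parts argument in which the upper boundary term vanishes automatically.
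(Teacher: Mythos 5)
The paper does not prove this lemma at all: it simply cites \cite[Lemma~D.1]{Montgomery-Vaughan}. So your route is by construction different from the paper's, and the right question is whether your sketch would actually close out a complete proof. Your integral representation
\[
\sum_{k=1}^K \frac{\sin(2\pi kx)}{\pi k} + \psi(x) = -\int_x^{1/2} \frac{\sin((2K+1)\pi t)}{\sin \pi t}\, dt
\]
is correct (matching derivatives via the Dirichlet kernel and checking the value at $x=1/2$), and the $\tfrac12$ bound via the alternating structure of the arches of $D_K$ on $[0,1/2]$ together with $D_K \leq 2K+1$ and $\int_0^{1/2}D_K = 1/2$ is sound.

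The gap is exactly where you flag it, and I do not think the fix you propose works. Write $\Phi(t) = \cos((2K+1)\pi t)/((2K+1)\pi)$, so that $\Phi' = -\sin((2K+1)\pi t)$ and indeed $\Phi(1/2)=0$ because $2K+1$ is odd. Integration by parts then gives, for $x\in(0,1/2]$,
\[
\int_x^{1/2}\frac{\sin((2K+1)\pi t)}{\sin\pi t}\,dt = \frac{\Phi(x)}{\sin\pi x} - \int_x^{1/2}\Phi(t)\,\frac{\pi\cos\pi t}{\sin^2\pi t}\,dt,
\]
with no boundary term at $t=1/2$, exactly as you anticipate. But the second integral is still present, and the only uniform bound available from $|\Phi|\leq 1/((2K+1)\pi)$ and $\int_x^{1/2}\pi\cos\pi t/\sin^2\pi t\,dt = 1/\sin\pi x - 1$ is
\[
\left|\int_x^{1/2}D_K(t)\,dt\right| \leq \frac{1}{(2K+1)\pi\sin\pi x} + \frac{1}{(2K+1)\pi}\left(\frac{1}{\sin\pi x}-1\right) = \frac{2}{(2K+1)\pi\sin\pi x} - \frac{1}{(2K+1)\pi},
\]
which is weaker than the claimed $1/((2K+1)\pi\sin\pi x)$ whenever $\sin\pi x < 1$. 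The vanishing of the boundary term at $1/2$ is already automatic, so it does not recover the missing factor of $2$. The same obstruction appears if one uses Bonnet's second mean value theorem directly: the resulting $\cos((2K+1)\pi x)-\cos((2K+1)\pi\xi)$ can be as large as $2$ in absolute value, and the location of $\xi\in[x,1/2]$ is not under control. An Abel-summation variant of this argument gives $1/((K+1)\pi|\sin\pi x|)$, which is again too weak. In short, the sharp constant requires a genuinely new idea beyond what you describe (some exploitation of the sign correlation between the main term and the remainder, or a more refined splitting), and as written the proposal has a real hole at this step.
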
 

\begin{proof}
See \cite[Lemma~D.1]{Montgomery-Vaughan}. 
\end{proof}

\begin{lemma}\label{Lemma:delete-h} For all $M, N\in \mathbb{Z}_{\ge 1}$ with $C_1N\leq M \leq C_2 N$ and $C_2>C_1>1$, we have
\begin{align*}
\sum_{n=1}^N \zeta(\sigma+2\pi idn) \frac{e(\theta n)}{n^{1-\sigma}}&=D \sum_{m=1}^M \frac{1}{m^{1-\sigma}}  \sum_{|h|\leq H} \int_{1}^N \frac{e(F_m(u)-hu)}{u^{1/2}} du \\
&+O(1 + \ichi(\sigma=0)  \log^{1-\alpha}N ),
\end{align*}
where $\ichi(P)=1$ if $P$ is true;   $\ichi(P)=0$ otherwise for every statement $P$. 
\end{lemma}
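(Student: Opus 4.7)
The plan is to start from equation~\eqref{equation:sum_integral-Euler_Maclaurin}, which writes the left-hand side as $D(T_1+T_2)+O(1)$, where $T_1=\sum_{m=1}^M \frac{1}{m^{1-\sigma}}\int_1^N \frac{e(F_m(u))}{u^{1/2}}\,du$ and $T_2=\sum_{m=1}^M \frac{1}{m^{1-\sigma}}\sum_{h\neq 0}\frac{1}{h}\int_1^N \frac{F'_m(u)}{u^{1/2}}e(F_m(u)-hu)\,du$. First, Lemma~\ref{Lemma:restrict-h} truncates the inner sum in $T_2$ to $0<|h|\leq H$ at the cost of an error $O(1+\ichi(\sigma=0)\log^{1-\alpha}N)$, which is exactly admissible. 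What remains is to reshape the truncated $T_1+T_2$ into $\sum_{m}\frac{1}{m^{1-\sigma}}\sum_{|h|\leq H}\int_1^N \frac{e(F_m(u)-hu)}{u^{1/2}}\,du$.

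The key algebraic move is the decomposition $F'_m(u)/h = 1 + (F'_m(u)-h)/h$, applied inside the $T_2$ integrand. The contribution of the ``$1$'' piece combines with $T_1$ (which plays the role of the missing $h=0$ term) to yield the sum over $|h|\leq H$ in the claim. What is left is the remainder
\[
R\;=\;\sum_{m=1}^M\frac{1}{m^{1-\sigma}}\sum_{0<|h|\leq H}\frac{1}{h}\int_1^N\frac{(F'_m(u)-h)\,e(F_m(u)-hu)}{u^{1/2}}\,du,
\]
and the whole task reduces to showing $R=O(1)$.

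Because $(F'_m(u)-h)\,e(F_m(u)-hu)=\tfrac{1}{2\pi i}\tfrac{d}{du}e(F_m(u)-hu)$, integration by parts against $u^{-1/2}$ converts $R$ into a boundary contribution at $u=1,N$ plus $\tfrac{1}{4\pi i}\sum_m \frac{1}{m^{1-\sigma}}\int_1^N \frac{e(F_m(u))}{u^{3/2}}\bigl[\sum_{0<|h|\leq H}\frac{e(-hu)}{h}\bigr]du$. Since $h$ and $N$ are integers, $e(-hN)=e(-h)=1$, so the boundary values factor out as $\sum_{0<|h|\leq H}1/h$, which vanishes by antisymmetry in $h$. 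Consequently the boundary contribution is identically zero, which is the crucial cancellation driving the proof.

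For the remaining integral I expand $\sum_{0<|h|\leq H}\frac{e(-hu)}{h}=-2i\sum_{h=1}^H\frac{\sin(2\pi hu)}{h}$ and apply Lemma~\ref{Lemma:saw-tooth-function} to rewrite this as $2\pi i\,\psi(u)+O(\min(1,(H\|u\|)^{-1}))$. Swapping the finite sum over $m$ with the $u$-integral and invoking Lemma~\ref{Lemma:e-zeta} to bound $\sum_m \frac{e(F_m(u))}{m^{1-\sigma}}\ll\log^{2/3}(u+1)$ (or $\ll 1$ when $\sigma<0$), both the $\psi(u)$ piece and the $\min$-error piece are dominated by $\int_1^\infty u^{-3/2}\log^{2/3}(u+1)\,du=O(1)$. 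I expect the main obstacle to be recognizing that the integrality of $N$ and $h$ is precisely what collapses the boundary terms after integration by parts; once this is spotted, all remaining bounds follow cleanly from the previously established lemmas, and the $\log^{1-\alpha}N$ in the final error is contributed entirely by Lemma~\ref{Lemma:restrict-h}.
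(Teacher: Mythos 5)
Your proof is correct and, after the dust settles, is essentially the same argument as the paper's. Your algebraic move of splitting $F_m'(u)/h = 1 + (F_m'(u)-h)/h$ \emph{before} integrating by parts, and the paper's move of writing $F_m'(u)\,e(F_m(u)-hu) = \tfrac{1}{2\pi i}e(-hu)\tfrac{d}{du}e(F_m(u))$ and integrating by parts against $u^{-1/2}e(-hu)$ (which produces the $\int_1^N u^{-1/2}e(F_m(u)-hu)\,du$ term as the ``$2\pi i h$'' piece of $\tfrac{d}{du}e(-hu)$), are two presentations of the same computation: both yield the same boundary terms, the same $u^{-3/2}$ integral, and the same $\int_1^N u^{-1/2}e(F_m(u)-hu)\,du$ contribution that merges with $T_1$ to complete the sum over $|h|\leq H$. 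You correctly identify the key cancellation (the boundary terms factor as $\sum_{0<|h|\leq H}1/h=0$ because $h$ and $N$ are integers), and your bound for the $u^{-3/2}$ remainder via Lemma~\ref{Lemma:saw-tooth-function} and Lemma~\ref{Lemma:e-zeta} matches the paper's (the paper simply bounds $|\sum_{0<|h|\leq H}e(-hu)/(2\pi i h)|\ll 1$ rather than expanding it as $\psi(u)$ plus an error, but the effect is identical).
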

\begin{proof}
By \eqref{equation:sum_integral-Euler_Maclaurin} and Lemma~\ref{Lemma:restrict-h}, we obtain 
\begin{align*}
\sum_{n=1}^N \zeta(\sigma+2\pi idn) \frac{e(\theta n)}{n^{1-\sigma}} &= D \sum_{m=1}^M \frac{1}{m^{1-\sigma}} \int_{1}^N \frac{e(F_m(u))}{u^{1/2}} du \\
&\quad +  D \sum_{m=1}^M \frac{1}{m^{1-\sigma}} \sum_{0<|h|\leq H} \frac{1}{h} \int_{1}^N \frac{F_m'(u)}{u^{1/2}}  e(F_m(u)-hu) du  \\
& \quad + O(1+\ichi(\sigma=0) \log^{1-\alpha} N ).
\end{align*}

By integration by parts, for all $1\leq m\leq M$ and $0<|h|\leq H$, we obtain 
\begin{align*}
&\frac{1}{h} \int_{1}^N \frac{F_m'(u)}{u^{1/2}}  e(F_m(u)-hu) du\\
&= \frac{1}{2\pi i h}\int_{1}^N \frac{1}{u^{1/2}} e(-hu) \frac{d}{du} (e(F_m(u))) du  \\
&=\frac{1}{2\pi i h} \left(\frac{e(F_m(N)-hN)}{N^{1/2}} -e(F_m(1)-h) \right)\\
&\quad + \frac{1}{2\pi i h} \int_{1}^N \left( \frac{ e(F_m(u)-hu)}{2u^{3/2}}  + \frac{2\pi i h e(F_m(u)-hu )}{u^{1/2}}\right)\ du .
\end{align*}
In addition, since $\sum_{0<|h|\leq H} h^{-1}=0$ , we observe that 
\begin{gather*}
\sum_{m=1}^M \frac{1}{m^{1-\sigma}} \sum_{0<|h|\leq H} \frac{e(F_m(1)-h)}{2\pi i h} = \sum_{m=1}^M \frac{e(F_m(1))}{m^{1-\sigma}} \sum_{0<|h|\leq H} \frac{1}{2\pi i h} =0,\\ 
\sum_{m=1}^M \frac{1}{m^{1-\sigma}} \sum_{0<|h|\leq H} \frac{e(F_m(N)-hN)}{2\pi i h} N^{-1/2}=N^{-1/2}\sum_{m=1}^M \frac{e(F_m(N))}{m^{1-\sigma}} \sum_{0<|h|\leq H} \frac{1}{2\pi i h} =0.
\end{gather*}
Therefore, Lemma~\ref{Lemma:e-zeta} and Lemma~\ref{Lemma:saw-tooth-function} imply that
\begin{align*}
&\sum_{m=1}^M \frac{1}{m^{1-\sigma}} \sum_{0<|h|\leq H} \frac{1}{4\pi ih} \int_{1}^N \frac{e(F_m(u)- hu )}{u^{3/2}} du \\
&\ll \int_{1}^N \frac{1}{u^{3/2}} \left| \sum_{m=1}^M \frac{e(F_m(u))}{m^{1-\sigma}} \right|\left|\sum_{0<|h|\leq H} \frac{e(-hu)}{2\pi ih} \right| du \\
&\ll \int_{1}^N \frac{\log^{2/3} (u+1)}{u^{3/2}} du \ll 1, 
\end{align*}
which completes the proof of Lemma~\ref{Lemma:delete-h}.
\end{proof}

\begin{lemma}\label{Lemma:mean_value_of_sum_of_e}
    Let $a$ and $b$ be integers with $a<b$. We have
    \[
    \int_a^b \left| \sum_{|h|\leq H} e(-hu) \right| du \ll (b-a)\log H.
    \]
\end{lemma}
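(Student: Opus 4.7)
The plan is to recognize the inner sum as (a normalization of) the Dirichlet kernel, then invoke the classical $L^1$ bound for the Lebesgue constant. Writing $K=\lfloor H\rfloor$ so that $\sum_{|h|\le H}$ is the same as $\sum_{h=-K}^{K}$, the geometric series identity yields
\[
\sum_{|h|\le H}e(-hu)=\sum_{h=-K}^{K}e^{-2\pi i h u}=\frac{\sin\bigl((2K+1)\pi u\bigr)}{\sin(\pi u)}\qquad(u\notin\mathbb{Z}),
\]
with the obvious value $2K+1$ at integers. Crucially, this function has period $1$, so since $a$ and $b$ are integers,
\[
\int_{a}^{b}\Bigl|\sum_{|h|\le H}e(-hu)\Bigr|\,du=(b-a)\int_{0}^{1}\Bigl|\frac{\sin\bigl((2K+1)\pi u\bigr)}{\sin(\pi u)}\Bigr|\,du.
\]
It therefore suffices to prove the $L^{1}$-norm over one period is $\ll\log H$.

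For this, I would set $\delta=1/(2K+1)$ and split $[0,1]$ into $[0,\delta]\cup[\delta,1-\delta]\cup[1-\delta,1]$. On the two short intervals adjacent to the integer endpoints (where $\sin(\pi u)$ vanishes), the trivial bound $|D_{K}(u)|\le 2K+1$ contributes $O(1)$ in total. On the middle interval $[\delta,1-\delta]$, I would use the elementary estimate $|\sin(\pi u)|\ge 2\|u\|$, where $\|u\|$ denotes the distance from $u$ to the nearest integer, to obtain $|D_{K}(u)|\le 1/(2\|u\|)$. Then, splitting at $u=1/2$ and using $\|u\|=u$ on $[\delta,1/2]$ and $\|u\|=1-u$ on $[1/2,1-\delta]$,
\[
\int_{\delta}^{1-\delta}\frac{du}{2\|u\|}=2\int_{\delta}^{1/2}\frac{du}{2u}=\log\!\left(\frac{1}{2\delta}\right)\ll\log H.
\]
Combining the three pieces gives $\int_{0}^{1}|D_{K}(u)|\,du\ll\log H$, and multiplying by $(b-a)$ finishes the proof.

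There is no substantive obstacle: this is exactly the classical Lebesgue-constant estimate, adapted to the normalization $e(x)=e^{2\pi i x}$ so that the kernel has period $1$. The only minor care needed is the symmetric treatment of the two endpoints of $[0,1]$, both of which are integer singularities of $\sin(\pi u)$, and the observation that replacing $H$ by $\lfloor H\rfloor$ in the Dirichlet-kernel formula is harmless because the sum ranges over integers. The use of integer endpoints $a<b$ is essential, as it is what allows periodicity to convert $\int_{a}^{b}$ into $(b-a)\int_{0}^{1}$.
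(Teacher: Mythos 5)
Your proof is correct and is essentially the same argument as the paper's: both reduce to one period $[0,1]$ using the integrality of $a,b$, both bound the kernel by (in effect) $\min(H,\|u\|^{-1})$, and both split $[0,1]$ into two short end pieces plus a middle piece to get $O(1)+O(\log H)$. The paper simply writes the pointwise bound $|\sum_{|h|\le H}e(-hu)|\ll\min(H,\|u\|^{-1})$ directly without passing through the closed-form Dirichlet-kernel expression, and it cuts at $H^{-1}$ rather than $1/(2K+1)$; these are cosmetic differences only.
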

\begin{proof}
By the periodicity of $e(\cdot)$, we have
\begin{align*}
    \frac{1}{b-a} \int_a^b \left| \sum_{|h|\leq H} e(-hu) \right| du
    &=\int_0^1 \left| \sum_{|h|\leq H} e(-hu) \right| du
    \ll \int_0^1 \min(H, \|u\|^{-1}) du\\
    &=\left( \int_0^{H^{-1}}+\int_{H^{-1}}^{1-H^{-1}}+\int_{1-H^{-1}}^1 \right) \min(H, \|u\|^{-1}) du\\
    &=H\left( \int_0^{H^{-1}}+\int_{1-H^{-1}}^1 \right) du + 2\int_{H^{-1}}^{1/2} u^{-1} du\\
    &\ll \log H.
\end{align*}
\end{proof}

\begin{lemma}\label{Lemma:restrictX}
For all $X\in \mathbb{Z}$ with $2\leq X\leq \min (M,N)$, we have
\begin{equation}\label{Equation:smallu}
\sum_{m=1}^M \frac{1}{m^{1-\sigma}} \sum_{|h|\leq H} \int_{1}^X \frac{ e(F_m(u)-hu)}{u^{1/2}} du\ll \begin{cases}
X^{1/2} (\log X)^{2/3} \log H & \text{if $\sigma=0$};\\
X^{1/2} \log H & \text{if $\sigma<0$.}
\end{cases}
\end{equation}
\end{lemma}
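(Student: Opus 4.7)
The plan is to interchange the order of summation and integration, apply Lemma~\ref{Lemma:e-zeta} to control the inner sum over $m$, and then dispose of the remaining weighted integral via a dyadic decomposition together with Lemma~\ref{Lemma:mean_value_of_sum_of_e}. Concretely, bringing the integral outside and taking absolute values gives
\[
\left|\sum_{m=1}^M \frac{1}{m^{1-\sigma}}\sum_{|h|\leq H}\int_1^X \frac{e(F_m(u)-hu)}{u^{1/2}}\,du\right| \leq \int_1^X \frac{1}{u^{1/2}}\left|\sum_{m=1}^M \frac{e(F_m(u))}{m^{1-\sigma}}\right|\left|\sum_{|h|\leq H} e(-hu)\right|du.
\]
Lemma~\ref{Lemma:e-zeta} estimates the first factor uniformly by $(\log(u+1))^{2/3}$ when $\sigma=0$ and by $O(1)$ when $\sigma<0$; in either case I bound this by the value at $u=X$ (since the bound is monotone in $u$) and pull the resulting factor $(\log X)^{2/3}$ or $1$ out of the integral.

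What remains is $\int_1^X u^{-1/2}|\sum_{|h|\leq H}e(-hu)|\,du$. I decompose $[1,X]$ dyadically: with $J=\lfloor\log_2 X\rfloor$ the interval $[1,X]$ lies in $\bigcup_{j=0}^{J}[2^j,2^{j+1}]$, and all the endpoints are integers, so Lemma~\ref{Lemma:mean_value_of_sum_of_e} applies on each block to give
\[
\int_{2^j}^{2^{j+1}}\left|\sum_{|h|\leq H}e(-hu)\right|du \ll 2^j\log H.
\]
Since $u^{-1/2}\leq 2^{-j/2}$ on $[2^j,2^{j+1}]$, the contribution of the $j$-th block is $\ll 2^{j/2}\log H$, and summing the geometric series yields $\sum_{j=0}^{J}2^{j/2}\log H \ll X^{1/2}\log H$. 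Combining with the bound for the $m$-sum produces exactly the two cases stated in the lemma.

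The only delicate point is the compatibility of the integer-endpoint hypothesis in Lemma~\ref{Lemma:mean_value_of_sum_of_e} with the actual range $[1,X]$; I handle this by enlarging the final dyadic block (or allowing the union to overshoot $X$), which only inflates the non-negative integrand and thereby strengthens the inequality. Apart from this bookkeeping, the argument is purely an interchange-plus-dyadic estimate, and I do not expect a substantive obstacle.
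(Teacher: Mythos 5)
Your proposal is correct and follows essentially the same route as the paper: bound the $m$-sum uniformly via Lemma~\ref{Lemma:e-zeta} (giving $(\log X)^{2/3}$ or $O(1)$), then estimate $\int_1^X u^{-1/2}\bigl|\sum_{|h|\leq H}e(-hu)\bigr|\,du \ll X^{1/2}\log H$ by a dyadic decomposition combined with Lemma~\ref{Lemma:mean_value_of_sum_of_e}. Your handling of the final dyadic block (overshooting $X$, using non-negativity of the integrand) is a harmless variant of the paper's use of $\min(2^r,X)$ as the endpoint.
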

\begin{proof}
Let $\Lambda$ be the left-hand side of \eqref{Equation:smallu}, that is, 
\[
\Lambda= \int_{1}^X \left( \frac{1}{u^{1/2}}\sum_{m=1}^M \frac{e(F_m(u))}{m^{1-\sigma}}\right) \left( \sum_{|h|\leq H} e(-hu)\right) du. 
\]
In the case $\sigma<0$, by Lemma~\ref{Lemma:e-zeta}, we obtain 
\begin{align*}
|\Lambda| &\leq \sup_{u\in [1,X] } \left| \sum_{m=1}^M \frac{e(F_m(u))}{m^{1-\sigma}}\right| \int_{1}^X  \frac{1}{u^{1/2}}  \left|   \sum_{|h|\leq H} e(-hu)  \right| du \\
&\ll \sum_{r=1}^R \int_{2^{r-1}}^{\min (2^{r}, X) }    \frac{1}{u^{1/2}}   \left|   \sum_{|h|\leq H} e(-hu)  \right| du,
\end{align*}
where $R$ is the integer satisfying $2^{R}\leq X < 2^{R+1}$. By Lemma~\ref{Lemma:mean_value_of_sum_of_e},  we have  
\[
\sum_{r=1}^R \int_{2^{r-1}}^{\max (2^{r}, X) }    \frac{1}{u^{1/2}}   \left|   \sum_{|h|\leq H} e(-hu)  \right| du \ll \sum_{r=1}^R 2^{-r/2} 2^r \log H  \ll X^{1/2} \log H. 
\]
Suppose that $\sigma=0$. Lemma~\ref{Lemma:e-zeta} leads to
\begin{align*}
|\Lambda| &\leq \sup_{u\in [1,X] } \left| \sum_{m=1}^M \frac{e(F_m(u))}{m}\right| \int_{1}^X  \frac{1}{u^{1/2}}  \left|   \sum_{|h|\leq H} e(-hu)  \right| du \\
& \ll  (\log X)^{2/3}   \int_{1}^X  \frac{1}{u^{1/2}}  \left|   \sum_{|h|\leq H} e(-hu)  \right| du.  
\end{align*}
By the proof of Lemma~\ref{Lemma:restrictX} for $\sigma<0$, we recall that 
\[
\int_{1}^X  \frac{1}{u^{1/2}}  \left|   \sum_{|h|\leq H} e(-hu)  \right| du \ll X^{1/2} \log H,
\]
and hence $\Lambda\ll X^{1/2}(\log X)^{2/3} \log H$.  
\end{proof}

Let $X=X(\sigma)$ be a parameter of integers with $2\leq X\leq \min (M,N)$. By combining Lemmas~\ref{Lemma:delete-h} and \ref{Lemma:restrictX}, we obtain the following proposition.

\begin{proposition}\label{proposition:zeta-restrictH}
For all $N, M\in \mathbb{Z}_{\geq2}$ with $C_1N\leq M \leq C_2 N$ and $C_2>C_1>1$, we have
\begin{align} \label{eq:zeta-restrictH}
&\sum_{n=1}^N \zeta(\sigma+2\pi idn) \frac{e(\theta n)}{n^{1-\sigma}}\\ \nonumber
&=D \sum_{m=1}^M \frac{1}{m^{1-\sigma}}  \sum_{|h|\leq H} \int_{X}^N \frac{1}{u^{1/2}} \cdot  e(F_m(u)-hu) du \\ \nonumber
&+O(1+ X^{1/2} \log H + \ichi(\sigma=0) (\log^{1-\alpha}N + X^{1/2} (\log X)^{2/3} \log H ) ).
\end{align}
\end{proposition}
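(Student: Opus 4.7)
The proof is essentially a direct combination of the two preceding lemmas, so the plan is short. The plan is to start from the identity established in Lemma~\ref{Lemma:delete-h}, namely
\[
\sum_{n=1}^N \zeta(\sigma+2\pi idn) \frac{e(\theta n)}{n^{1-\sigma}}
= D \sum_{m=1}^M \frac{1}{m^{1-\sigma}} \sum_{|h|\leq H} \int_{1}^N \frac{e(F_m(u)-hu)}{u^{1/2}}\,du
+ O\bigl(1 + \ichi(\sigma=0)\log^{1-\alpha}N\bigr),
\]
and then split the inner integral as $\int_1^N = \int_1^X + \int_X^N$. The first piece is what the conclusion retains, so the work is entirely in showing that the second piece — the contribution from $u\in[1,X]$ — can be absorbed into the error term.

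For that piece, I would invoke Lemma~\ref{Lemma:restrictX} directly. That lemma bounds
\[
\sum_{m=1}^M \frac{1}{m^{1-\sigma}}\sum_{|h|\leq H}\int_1^X \frac{e(F_m(u)-hu)}{u^{1/2}}\,du
\]
by $O(X^{1/2}\log H)$ when $\sigma<0$ and by $O(X^{1/2}(\log X)^{2/3}\log H)$ when $\sigma=0$. Multiplying through by the constant $D$ preserves the bound, so this quantity can be pulled out and added to the existing error $O(1+\ichi(\sigma=0)\log^{1-\alpha}N)$.

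Combining the two contributions yields exactly the advertised error
\[
O\bigl(1 + X^{1/2}\log H + \ichi(\sigma=0)\bigl(\log^{1-\alpha}N + X^{1/2}(\log X)^{2/3}\log H\bigr)\bigr),
\]
together with the announced main term $D\sum_m m^{\sigma-1}\sum_{|h|\leq H}\int_X^N u^{-1/2} e(F_m(u)-hu)\,du$. I do not foresee any real obstacle in this step: Lemma~\ref{Lemma:delete-h} supplies the starting identity with the summation over $h$ already truncated to $|h|\leq H$, and Lemma~\ref{Lemma:restrictX} supplies the precise bound for the short integral, so the proposition is obtained simply by adding the two estimates. The genuine technical work has been done earlier — in deriving Lemma~\ref{Lemma:delete-h} via integration by parts and the Fourier expansion of $\psi$, and in establishing Lemma~\ref{Lemma:restrictX} via the mean value estimate of Lemma~\ref{Lemma:mean_value_of_sum_of_e} together with the bound on $\sum_{m}e(F_m(u))/m^{1-\sigma}$ from Lemma~\ref{Lemma:e-zeta}.
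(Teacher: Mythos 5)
Your proposal matches the paper's derivation exactly: the paper explicitly obtains Proposition~\ref{proposition:zeta-restrictH} ``by combining Lemmas~\ref{Lemma:delete-h} and~\ref{Lemma:restrictX},'' i.e.\ splitting $\int_1^N=\int_1^X+\int_X^N$ in the identity from Lemma~\ref{Lemma:delete-h} and absorbing the short-range piece via Lemma~\ref{Lemma:restrictX}. Nothing is missing; the argument is correct and is the paper's own.
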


\section{Restricting the ranges of summation and integration I\hspace{-2pt}I}\label{section:Restricting_the_ranges_of_summation_and_integration_II}

For every $m\in \mathbb{Z}_{[1,M]}$, $h\in\mathbb{Z}_{[-H,H]}$, and real number $u\in [X,N]$, we set 
\[
f(u)=f_{m,h}(u)= F_m(u)-hu=d u \log\left(\frac{e m}{d u}\right)+\theta u-hu.
\]
Then we have
\[
f'(u) = d \log \left( \frac{m}{d u} \right)+\theta-h =0 \iff u= \frac{m}{d} e^{(\theta-h)/d} \eqqcolon \xi_{m,h}   
\]
Through the stationary phase integration in \eqref{eq:SPI1}, if $\xi_{m,h}\in [X,N]$, then the integral $\int_{X}^N u^{-1/2} e(f(u)) du$  should be large. We here observe that 
\begin{align*}
X \leq \xi_{m,h} \leq N &\iff \frac{d X }{m} \leq e^{(\theta-h)/d}  \leq  \frac{d N }{m} \\
& \iff d \log\left(\frac{m}{d N} \right)+\theta \leq h\leq   d \log\left(\frac{m}{d X} \right)+\theta.  
\end{align*}
Let $H_1=H_1(m)= d \log\left(\frac{m}{d N} \right)+\theta$ and let $H_2=H_2(m)=d \log\left(\frac{m}{d X} \right)+\theta$. Note that 
\[
H_2-H_1=d\log(N/X). 
\]
In addition, let $V$ be a parameter in $(0,1/4]$ determined later. For every $m\in \mathbb{Z}_{[1,M]}$, let 
\[
H_1'=H_1'(m)=d\log \left(\frac{m}{d(1+V)N}\right) +\theta, \quad H_1''=H_1''(m)=d\log \left(\frac{m}{d(1-V)N}\right) +\theta.
\]
In Section \ref{section:Restricting_the_ranges_of_summation_and_integration_II}, we divide the summation regarding $h$ into the following six parts:
\begin{align*}
[-H, H]
&=[-H, H_1-2d]\cup (H_1-2d, H_1']\cup (H_1', H_1'']\\
&\cup (H_1'', H_2-2d]\cup (H_2-2d, H_2+d]\cup (H_2+d, H].
\end{align*}
Note that some intervals could be empty. We estimate the summation of the part of $(H_2+d, H]$ in Lemma \ref{lemma:From_H2+d_to_H}, $(H_2-2d, H_2+d]$ in Lemma \ref{Lemma:H2}, $[-H, H_1-2d]$ in Lemma \ref{Lemma:H1-f1}, $(H_1-2d, H_1']$ in Lemma \ref{Lemma:H1-f2}, $(H_1', H_1'']$ in Lemma \ref{Lemma:From_H1'_to_H1''}, and $(H_1'', H_2-2d]$ in Section \ref{section:Applying_the_stationary_phase_method}, respectively.

In order to apply Lemma~\ref{lemma:exponential-int-first},  let 
\[
G(u)=u^{1/2} f'_{m,h}(u)= u^{1/2} \left(d \log\left(\frac{m}{d u} \right)+\theta-h\right). 
\]
Then we have
\[
G'(u) = \frac{1}{2u^{1/2}} f'_{m,h}(u) + u^{1/2} f''_{m,h}(u) = \frac{1}{2u^{1/2}}  \left(d \log\left(\frac{m}{d e^2 u} \right) +\theta-h \right). 
\]
Therefore, the following properties hold:
\begin{center}
(G1) $G(u)>0$ $\iff$ $u<\xi_{m,h}$,\quad\quad (G2) $G'(u)>0$ $\iff$ $u< e^{-2}\xi_{m,h}$.  
\end{center}

\begin{lemma}\label{lemma:From_H2+d_to_H}
We have
\[
\sum_{m=1}^M \frac{1}{m^{1-\sigma}}  \sum_{H_2+d< h\leq H} \int_{X}^N \frac{ e(f_{m,h}(u))}{u^{1/2}}  du \ll \begin{cases}  
X^{-1/2}(\log N) \log \log N&  \text{if $\sigma=0$}; \\
X^{-1/2}\log\log N &  \text{if $\sigma<0$}.
\end{cases}
\]
\end{lemma}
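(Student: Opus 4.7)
\medskip

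The plan is to exploit the fact that when $h > H_2(m)+d$, the stationary point $\xi_{m,h}=(m/d)e^{(\theta-h)/d}$ lies strictly to the left of the integration interval $[X,N]$, so that $f'_{m,h}$ has no zero in $[X,N]$ and we can apply the first-derivative test of Lemma~\ref{lemma:exponential-int-first}. More precisely, since $f'_{m,h}(u)=d\log(m/(du))+\theta-h$ is decreasing in $u$, its maximum over $[X,N]$ is $f'_{m,h}(X)=H_2(m)-h$, which is less than $-d$ under our hypothesis. Consequently $|f'_{m,h}(u)|\geq h-H_2(m)>d$ throughout $[X,N]$.

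Next I would verify the two hypotheses of Lemma~\ref{lemma:exponential-int-first} with $g(u)=u^{-1/2}$ and $f=f_{m,h}$. The quantity $g/f'=1/G(u)$ where $G(u)=u^{1/2}f'_{m,h}(u)$ was studied in properties (G1)--(G2): since $\xi_{m,h}<X$, we have $u>e^{-2}\xi_{m,h}$ everywhere on $[X,N]$, so $G'(u)<0$ there; as $G<0$ and is decreasing, $1/G$ is monotonic on $[X,N]$. The lower bound $|f'(u)/g(u)|=|G(u)|\geq X^{1/2}(h-H_2(m))$ follows because $|G|$ is increasing and attains its minimum at $u=X$. Lemma~\ref{lemma:exponential-int-first} therefore yields
\[
\left|\int_X^N \frac{e(f_{m,h}(u))}{u^{1/2}}\,du\right|\ll \frac{1}{X^{1/2}(h-H_2(m))}.
\]

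It then remains to sum this bound. For the inner sum over $h$, setting $k=h-\lfloor H_2(m)\rfloor$ gives
\[
\sum_{H_2(m)+d<h\leq H}\frac{1}{h-H_2(m)}\ll \log(H-H_2(m)+1)\ll \log H\ll \log\log N,
\]
since $H=c\log^{1+\alpha}(MN)$ and $M\asymp N$. Summing over $m$ then contributes $\sum_{m\leq M}m^{\sigma-1}\ll 1$ when $\sigma<0$ and $\ll\log M\ll \log N$ when $\sigma=0$, which gives the two cases of the stated bound after factoring out $X^{-1/2}$.

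The argument is almost entirely mechanical; the only subtle point is confirming that the monotonicity hypothesis of Lemma~\ref{lemma:exponential-int-first} truly holds uniformly in $h$ and $m$, which is why the explicit comparison $e^{-2}\xi_{m,h}<X$ (a consequence of $h>H_2(m)+d$) is important. Once that is in hand, the only quantitative input is the harmonic-sum estimate for $h$ and the trivial sum for $m$, both of which are routine.
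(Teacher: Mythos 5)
Your proposal is correct and matches the paper's own proof essentially step for step: both verify via (G1)--(G2) that for $h>H_2+d$ the function $G(u)=u^{1/2}f'_{m,h}(u)$ is negative and decreasing on $[X,N]$ with $|G(u)|\geq X^{1/2}(h-H_2)$, then apply the first-derivative estimate of Lemma~\ref{lemma:exponential-int-first}, and finally sum the resulting harmonic bound over $h$ (giving $\log\log N$) and over $m$ (giving $1$ or $\log N$). No meaningful divergence from the paper's argument.
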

\begin{proof} Take any integer $h$ with $H_2+d< h\leq H$. Then $X> \xi_{m,h} e$ holds by the definition of $H_2$. Thus, we have 
\begin{equation}\label{Inequality:large-h} 
\xi_{m,h} e^{-2}<\xi_{m,h}< \xi_{m,h} e  < X.   
\end{equation}
By \eqref{Inequality:large-h}, (G1), and (G2), the function $G(u)$ is negative and monotonically decreasing on  $[X, N]$, and hence for every $u\in[X,N]$,
\[
|G(u)|= -G(u) \geq -G(X)= X^{1/2} \left( h-\theta- d \log\left( \frac{m}{d X} \right) \right)= X^{1/2} (h-H_2) .
\]
Therefore, by Lemma~\ref{lemma:exponential-int-first} with $g(u)\coloneqq u^{-1/2}$, $f(u)\coloneqq f_{m,h}(u)$, $a\coloneqq X$, and $b\coloneqq N$, we have
\begin{align*}
\sum_{m=1}^M \frac{1}{m^{1-\sigma}}  \sum_{H_2+d\leq h\leq H} \int_{X}^N \frac{ e(f_{m,h}(u))}{u^{1/2}}  du &\ll \sum_{m=1}^M \frac{1}{m^{1-\sigma}} \sum_{H_2+d< h\leq H}  \frac{1}{X^{1/2} (h-H_2)}\\
&\ll \begin{cases}  
X^{-1/2}(\log N) \log \log N&  \text{if $\sigma=0$}; \\
X^{-1/2}\log\log N &  \text{if $\sigma<0$}.
\end{cases}
\end{align*}
\end{proof}

\begin{lemma}\label{Lemma:zeta-critical2} For all $M\in \mathbb{N}$ and $|t|\ge 1$, we have
\[
\zeta (1-\sigma+it) =\sum_{m=1}^M \frac{1}{m^{1-\sigma+it}} -it \int_{M}^\infty \frac{\{u\}}{u^{2-\sigma+it}} du +O(M^\sigma |t|^{-1} +M^{-1+\sigma} )  
\]
\end{lemma}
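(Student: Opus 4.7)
The plan is to reduce the claim to an exact truncated representation of $\zeta$ obtained via Abel summation, and then to collect the error terms. First I will establish the identity
\[
\zeta(s)=\sum_{m=1}^{M}m^{-s}+\frac{M^{1-s}}{s-1}-s\int_{M}^{\infty}\frac{\{u\}}{u^{s+1}}\,du,
\]
valid for $\operatorname{Re}(s)>0$ with $s\neq 1$. To do this I apply Abel summation to $\sum_{n=1}^{N}n^{-s}$ with integer $N\ge M$, writing $\sum_{n\le N}n^{-s}=N^{1-s}+s\int_{1}^{N}\lfloor t\rfloor\, t^{-s-1}\,dt$, substituting $\lfloor t\rfloor=t-\{t\}$, evaluating $\int_{1}^{N}t^{-s}\,dt$ in closed form, and sending $N\to\infty$ in the region $\operatorname{Re}(s)>1$. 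The resulting formula extends to $\operatorname{Re}(s)>0$, $s\neq 1$ by analytic continuation, since the tail integral converges absolutely there thanks to $|\{u\}|\le 1$.

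Next I specialize to $s=1-\sigma+it$, noting $s-1=-\sigma+it$ and $-s=-it-(1-\sigma)$. The boundary term becomes $M^{\sigma-it}/(-\sigma+it)$, whose modulus is $M^{\sigma}/\sqrt{\sigma^{2}+t^{2}}\le M^{\sigma}/|t|$; this is absorbed into the $O(M^{\sigma}|t|^{-1})$ error. For the integral I split the coefficient as $-s=-it-(1-\sigma)$, which yields the claimed main term $-it\int_{M}^{\infty}\{u\}\,u^{-(2-\sigma+it)}\,du$ together with a remainder of size at most
\[
\left|(1-\sigma)\int_{M}^{\infty}\{u\}\,u^{-2+\sigma-it}\,du\right|\le (1-\sigma)\int_{M}^{\infty}u^{-2+\sigma}\,du=M^{-1+\sigma},
\]
which is absorbed into the $O(M^{-1+\sigma})$ error. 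Assembling these contributions yields the lemma.

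The argument is essentially bookkeeping around a single Abel summation, so I do not anticipate any serious obstacle. The one step that deserves a little care is the passage from the classical region of absolute convergence $\operatorname{Re}(s)>1$ to the half-plane $\operatorname{Re}(s)>0$; this is handled either by analytic continuation of both sides of the truncated identity, or, alternatively, by performing Abel summation directly on a finite truncation and observing that the remaining tail integral is absolutely convergent whenever $\sigma\le 0$, which is precisely the regime of interest.
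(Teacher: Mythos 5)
Your proof is correct and follows essentially the same route as the paper: apply Abel (partial) summation to obtain the exact truncated formula $\zeta(s)=\sum_{m\le M}m^{-s}+M^{1-s}/(s-1)-s\int_M^\infty \{u\}u^{-s-1}\,du$, substitute $s=1-\sigma+it$, and absorb the boundary term and the $-(1-\sigma)$ portion of the integral into the stated $O$-terms. The only difference is that you fill in the bookkeeping (the $M^{\sigma}/|t|$ and $M^{-1+\sigma}$ estimates, the analytic-continuation step) that the paper leaves implicit; this is not a different approach, just a more explicit presentation.
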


\begin{proof}
If $\Re s >1$, then the Abel summation formula leads to
\[
\zeta(s)= \sum_{m=1}^M \frac{1}{m^s} + \frac{M^{1-s}}{s-1} -s \int_{M}^\infty 
 \frac{\{u\}}{u^{s+1}} du.  
\]
The integral of the third term is convergent for $\Re s>0$. Thus, by substituting $s=1-\sigma+it$, 
\begin{align*}
\zeta(1-\sigma+it) &=\sum_{m=1}^M  \frac{1}{m^{1-\sigma+it}} + \frac{M^{\sigma-it}}{it-\sigma} - (1-\sigma+it) \int_{M}^\infty 
 \frac{\{u\}}{u^{2-\sigma+it}} du\\
 &= \sum_{m=1}^M \frac{1}{m^{1-\sigma+it}} -it \int_{M}^\infty \frac{\{u\}}{u^{2-\sigma+it}} du + O (M^\sigma |t|^{-1} +M^{-1+\sigma} ).  
\end{align*}
\end{proof}

\begin{lemma}\label{Lemma:e-zeta-interval} For all real numbers $a,b\geq 1$ and $u\geq 1$, we have
\[
\left|\sum_{a< m\leq b} \frac{e(f_{m,h}(u))}{m^{1-\sigma}} \right| \ll \min(u a^{-1+\sigma} +  a^{\sigma}u^{-1},  \log(b/a)+1).
\]
\end{lemma}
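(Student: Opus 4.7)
\smallskip

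\noindent\textbf{Proof plan for Lemma~\ref{Lemma:e-zeta-interval}.} The plan is to reduce the sum to a truncated Dirichlet series for $\zeta$ at $s = 1-\sigma - 2\pi i du$ and then apply two different bounds.

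First I would separate out the $m$-dependence in the phase. Expanding
\[
f_{m,h}(u) = du\log m + du - du\log(du) + (\theta-h)u,
\]
the terms other than $du\log m$ do not depend on $m$, so $e(f_{m,h}(u)) = C(u,h)\, m^{2\pi i d u}$ for some unimodular factor $C(u,h)$. Therefore
\[
\sum_{a<m\leq b} \frac{e(f_{m,h}(u))}{m^{1-\sigma}} = C(u,h) \sum_{a<m\leq b} \frac{1}{m^{s}}, \qquad s \coloneqq 1-\sigma - 2\pi i d u,
\]
and it suffices to bound $\bigl|\sum_{a<m\leq b} m^{-s}\bigr|$. Note $\Re s = 1-\sigma \geq 1$ and $|\Im s| = 2\pi d u \asymp u$ for $u\geq 1$.

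For the trivial side of the minimum, I would use $|m^{-s}| = m^{-1+\sigma} \leq m^{-1}$ and estimate
\[
\Bigl|\sum_{a<m\leq b} m^{-s}\Bigr| \leq \sum_{a<m\leq b} \frac{1}{m} \ll \log(b/a) + 1.
\]

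For the other side of the minimum, I would apply Lemma~\ref{Lemma:zeta-critical2} (which is valid for $|t|\geq 1$; the case $|t|<1$ is subsumed by the trivial bound up to constants depending on $d$) at $M = \lfloor b\rfloor$ and $M = \lfloor a\rfloor$ and subtract, so that $\zeta(s)$ cancels. This gives
\[
\sum_{a<m\leq b} \frac{1}{m^{s}} = -it \int_a^b \frac{\{v\}}{v^{s+1}}\,dv + O\!\bigl(a^{\sigma}|t|^{-1} + a^{-1+\sigma}\bigr),
\]
where I absorb the contributions at $b$ into those at $a$ using $a < b$ and $\sigma\leq 0$. Since $|v^{-s-1}| = v^{\sigma-2}$, the remaining integral is bounded by $\int_a^\infty v^{\sigma-2}\,dv \ll a^{\sigma-1}$, and $|t|\asymp u$, so
\[
\Bigl|\sum_{a<m\leq b} \frac{1}{m^{s}}\Bigr| \ll u\, a^{-1+\sigma} + a^{\sigma} u^{-1} + a^{-1+\sigma} \ll u a^{-1+\sigma} + a^{\sigma} u^{-1},
\]
the final simplification using $u\geq 1$. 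Taking the minimum of the two bounds yields Lemma~\ref{Lemma:e-zeta-interval}.

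The only mildly delicate point is matching the integer truncation of Lemma~\ref{Lemma:zeta-critical2} (which demands $M\in\mathbb{N}$) with the real endpoints $a,b\geq 1$; replacing $a,b$ by $\lfloor a\rfloor,\lfloor b\rfloor$ introduces at most one extra term of size $a^{-1+\sigma}$, which is already absorbed in the error. Otherwise the argument is a direct application of the Abel-summation form of $\zeta(s)$ in the half-plane $\Re s > 0$, so I do not expect a real obstacle.
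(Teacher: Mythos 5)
Your proposal is correct and follows essentially the same route as the paper: extract the $m$-independent phase to reduce to the partial Dirichlet sum $\sum_{a<m\leq b} m^{-(1-\sigma\pm 2\pi i du)}$, use the trivial bound $\sum_{a<m\leq b} m^{-1+\sigma}\ll \log(b/a)+1$ for one side of the minimum, and apply Lemma~\ref{Lemma:zeta-critical2} at the two endpoints so that $\zeta(1-\sigma+it)$ cancels and the tail integrals give $u\,a^{-1+\sigma}+a^{\sigma}u^{-1}$ for the other. Your remarks on the integer truncation and on small $|t|$ are harmless refinements of details the paper passes over silently.
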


\begin{proof}
It follows that
\[
\left|\sum_{a< m\leq b} \frac{e(f_{m,h}(u))}{m^{1-\sigma}} \right|=  \left|\sum_{a< m\leq b} \frac{e(d u \log (em/(d u) ) +\theta u-hu )  }{m^{1-\sigma}} \right| = \left|\sum_{a< m\leq b} \frac{1}{m^{1-\sigma+2\pi idu} } \right|. 
\]
By applying Lemma~\ref{Lemma:zeta-critical2}, we have 
\begin{align*}
\sum_{a< m\leq b} \frac{1}{m^{1-\sigma+2\pi idu} } &= \sum_{ 1\leq m\leq b} \frac{1}{m^{1-\sigma+2\pi idu} } - \sum_{1\leq m\leq a} \frac{1}{m^{1-\sigma+2\pi idu} } \\
&= \zeta(1-\sigma+2\pi idu) -\zeta (1-\sigma+2\pi idu)\\
&\quad-2\pi idu \left(\int_{a}^\infty \frac{\{u\}}{u^{2-\sigma+2\pi idu}} du -  \int_{b}^\infty \frac{\{u\}}{u^{2-\sigma+2\pi idu}} du\right) \\ 
&\quad+O(b^\sigma u^{-1} +b^{-1+\sigma} + a^\sigma u^{-1} +a^{-1+\sigma})  \\
&\ll u a^{-1+\sigma} + a^\sigma u^{-1}.
\end{align*}
On the other hand, we get
\begin{align*}
    \left|\sum_{a< m\leq b} \frac{e(f_{m,h}(u))}{m^{1-\sigma}} \right|\leq \sum_{a<m\leq b} \frac{1}{m^{1-\sigma}}
    \ll
    \begin{cases}
        \log(b/a)+1  &  \text{if $\sigma=0$},\\
        1  &  \text{if $\sigma<0$}.
    \end{cases}
\end{align*}
\end{proof}

\begin{lemma}\label{Lemma:H2} We have
\[
\sum_{m=1}^M \frac{1}{m^{1-\sigma}}  \sum_{H_2-2d\leq h\leq H_2+d} \int_{X}^N \frac{ e(f_{m,h}(u))}{u^{1/2}}  du \ll \begin{cases}
 X^{1/2}\log X+  X^{-1/2}\log N  & \text{if $\sigma=0$}; \\ 
 1 & \text{if $\sigma<0$}.
\end{cases}
\]
\end{lemma}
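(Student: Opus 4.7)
The sum concerns the range of $h$ for which the stationary point $\xi_{m,h}=(m/d)e^{(\theta-h)/d}$ of the phase $f_{m,h}(u)=du\log(em/(du))+(\theta-h)u$ lies in the window $[X/e,Xe^2]$ near the lower endpoint of the $u$-integration, and for each $m$ this $h$-range contains only $O_d(1)$ integers. My plan is to combine a per-pair bound $|\int_X^N u^{-1/2}e(f_{m,h}(u))\,du|\ll 1$ obtained via a stationary-phase splitting with a swap of summation order followed by an application of Lemma~\ref{Lemma:e-zeta-interval} to the inner $m$-sum.

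The per-pair bound is obtained by decomposing $[X,N]$ into $[X,2X]$ and $[2X,N]$. On $[X,2X]$, where the stationary point $\xi_{m,h}$ may lie, a further split at $\xi_{m,h}$ combined with the second-derivative test (Lemma~\ref{lemma:second-derivative}), using $|f''_{m,h}(u)|=d/u\asymp 1/X$ and $|g(u)|=u^{-1/2}\le X^{-1/2}$, yields $\ll 1$; the monotonicity of $g/f'$ on each sub-piece follows from the identity
\[
\Bigl(\frac{g}{f'}\Bigr)'(u)=\frac{u^{-3/2}}{d(\log(\xi_{m,h}/u))^2}\Bigl(1-\frac{\log(\xi_{m,h}/u)}{2}\Bigr),
\]
whose sign is determined by $\log(\xi_{m,h}/u)$ and which therefore has constant sign on any subinterval of $[X,2X]$ contained in $\{u<\xi_{m,h}\}$ or $\{u>\xi_{m,h}\}$. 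On $[2X,N]$, since $\xi_{m,h}\le Xe^2$ one has $|f'_{m,h}(u)|=d|\log(\xi_{m,h}/u)|\gtrsim 1$, so Lemma~\ref{lemma:exponential-int-first} gives $\ll X^{-1/2}$. For the swap, the condition $H_2(m)-2d\le h\le H_2(m)+d$ is equivalent, for each fixed integer $h$, to $m\in[a_h,b_h]$ with $a_h=dXe^{(h-d-\theta)/d}$ and $b_h/a_h=e^3$, so $\log(b_h/a_h)+1=O_d(1)$ and Lemma~\ref{Lemma:e-zeta-interval} gives $|\sum_{a_h<m\le b_h} e(f_{m,h}(u))/m^{1-\sigma}|\ll \min(u\,a_h^{-1+\sigma}+a_h^\sigma u^{-1},\,1)$ uniformly in $u\in[X,N]$.

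For $\sigma<0$, the per-pair bound $\ll 1$ together with convergence of $\sum_{m\ge 1} m^{-1+\sigma}=\zeta(1-\sigma)$ immediately yields the $O(1)$ estimate. For $\sigma=0$ I would stratify the $O(\log N)$ relevant integers $h$ by the size of $a_h\asymp Xe^{h/d}$: the range $a_h\le X$ contains $O(\log X)$ integers and contributes $\ll X^{1/2}\log X$ via the per-pair bound combined with the trivial length bound $\int_X^{2X} u^{-1/2}\,du\ll X^{1/2}$ on the boundary window containing $\xi_{m,h}$; the range $a_h>X$ contains $O(\log(N/X))$ integers, and integrating the $1/u$ piece of Lemma~\ref{Lemma:e-zeta-interval}'s estimate against $u^{-1/2}$ and summing the resulting geometric series yields $\ll X^{-1/2}\log N$. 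The main obstacle is the precise accounting in the $\sigma=0$ case: the naive per-pair bound $\ll 1$ summed over all $O(\log N)$ pairs $h$ produces only $\log N$, whereas the claim $X^{1/2}\log X+X^{-1/2}\log N$ is strictly sharper for intermediate $X$ such as $X\asymp \log N/\log\log N$; the improvement requires using the $u\,a_h^{-1+\sigma}+a_h^\sigma u^{-1}$ piece of Lemma~\ref{Lemma:e-zeta-interval} precisely in the ranges where the trivial per-pair estimate is wasteful, and carefully exploiting the geometric progression structure $a_h\asymp Xe^{h/d}$ to avoid accumulating an additional logarithmic factor.
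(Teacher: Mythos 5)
Your overall plan matches the paper's: a near/far split of the $u$-interval, second-derivative test on the near piece and first-derivative test on the far piece, then for $\sigma=0$ a swap of the $m$- and $h$-sums so that Lemma~\ref{Lemma:e-zeta-interval} can be applied to the inner $m$-sum, stratified roughly by whether $a_h\le X$ or $a_h>X$. However, your split point $2X$ does not do the job. The constraint $H_2-2d\le h\le H_2+d$ only guarantees $\xi_{m,h}\in[e^{-1}X,\,e^{2}X]$, and $e^{2}X\approx 7.39X>2X$, so the stationary point can lie well inside $(2X,N)$; at such a point $f'_{m,h}$ vanishes, and your claim ``$|f'_{m,h}(u)|\gtrsim 1$ on $[2X,N]$'' is false. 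You need to split at a multiple of $X$ strictly larger than $e^{2}X$; the paper uses $e^{3}X$, so that $u\ge e^{3}X$ forces $u/\xi_{m,h}\ge e$ and hence $|f'_{m,h}|\ge d$. There is also a small slip in your monotonicity argument: the sign of $(g/f')'$ is governed by whether $\log(\xi_{m,h}/u)$ is above or below $2$, i.e.\ by $u\lessgtr \xi_{m,h}e^{-2}$, not by $u\lessgtr\xi_{m,h}$; the conclusion still holds on $[X,e^{3}X]$ because $\xi_{m,h}e^{-2}\le X$ under the given constraints, so after splitting at $\xi_{m,h}$ (where $f'$ vanishes) each piece has $g/f'$ monotone, but your stated reason is incorrect. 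Finally, in the $\sigma=0$ stratification, be aware that the geometric sum $\sum_h X^{3/2}a_h^{-1}$ contributes a term $\asymp X^{1/2}$ in addition to $X^{-1/2}\log N$; this is harmless since $X^{1/2}\ll X^{1/2}\log X$, but it should be tracked, and the trivial $X^{1/2}$-per-$h$ bound in the range $a_h\le X$ comes from Lemma~\ref{Lemma:e-zeta-interval}'s $\log(b/a)+1=O(1)$ estimate together with $\int_X^{e^{3}X}u^{-1/2}\,du\ll X^{1/2}$, not from the stationary-phase per-pair bound.
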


\begin{proof}
Take any integer $h$ with $H_2-2d\leq h \leq H_2+d$. Then we see that 
\begin{align} \label{eq:restrictH2-1}
H_2-2d \leq h \leq H_2+d &\iff e^{-1 }X \leq \xi_{m,h} \leq e^2 X   \\ \notag
&\iff d e^{(h-\theta)/d} Xe^{-1} \leq m \leq d e^{(h-\theta)/d} Xe^2.  
\end{align} 
We decompose the integral as 
\begin{align*}
\int_{X}^N \frac{ e(f_{m,h}(u))}{u^{1/2}}  du = \int_{X}^{e^{3 }X} \frac{ e(f_{m,h}(u))}{u^{1/2}}  du+\int_{e^3 X}^{N} \frac{ e(f_{m,h}(u))}{u^{1/2}}  du \eqqcolon \mathcal{T}_1+\mathcal{T}_2.
\end{align*}
Let us evaluate $\mathcal{T}_2$. For every $u\in [e^{3}X, N]$, by \eqref{eq:restrictH2-1}, it follows that 
\[
e^{-2} \xi_{m,h} < \xi_{m,h} \leq e^2X < e^3X \leq u,
\]
and hence by (G1) and (G2), the function $G(u)$ is negative and monotonically decreasing on $[e^{3}X, N]$. Therefore, we have
\begin{align*}
|G(u)|&= -G(u) \geq -G(e^3X)=  e^{3/2}X^{1/2} (h+3d-H_2)\gg X^{1/2}.
\end{align*}
Therefore, Lemma~\ref{lemma:exponential-int-first} with $g(u)\coloneqq u^{-1/2}$, $f(u)\coloneqq f_{m,h}(u)$, $a\coloneqq e^{3}X$, and $b\coloneqq N$ implies that 
\begin{align*}
\sum_{m=1}^M \frac{1}{m^{1-\sigma}}\sum_{H_2-2d\leq h\leq H_2+d}\mathcal{T}_2 
&\ll \sum_{m=1}^M \frac{1}{m^{1-\sigma}} \sum_{H_2-2d\leq h\leq H_2+d}  \frac{1}{X^{1/2} }\\
&\ll \begin{cases}
X^{-1/2}\log M &\text{if $\sigma=0$;}\\
1 &  \text{if $\sigma<0$.}
\end{cases}   
\end{align*}
Let us evaluate $\mathcal{T}_1$. In the case $\sigma<0$, by \eqref{eq:restrictH2-1}, each $u \in [X,e^{3}X]$ satisfies 
\[
e^{-2} \xi_{m,h}\leq X\leq u.
\]
Thus, by (G2), $G(u)$ is monotonically decreasing on $[X,e^{3}X]$. Further, $|f''_{m,h}(u)|\gg u^{-1} \gg  X^{-1}$ holds on $[X, e^{3}X]$, where the implicit constant does not depend on $m$ or $h$.  Therefore, Lemma~\ref{lemma:second-derivative} with $g(u) \coloneqq u^{-1/2}$, $f(u)\coloneqq f_{m,h}(u)$, $a\coloneqq X$, and $b\coloneqq e^{3}X $  implies that 
\[
\int_{X}^{e^3 X} \frac{ e(f_{m,h}(u))}{u^{1/2}}  du  \ll X^{1/2} X^{-1/2}\ll 1.
\]
Therefore, if $\sigma<0$, then we have 
\[
\sum_{m=1}^M \frac{1}{m^{1-\sigma}}  \sum_{H_2-2d\leq h\leq H_2+d} \int_{X}^N \frac{ e(f_{m,h}(u))}{u^{1/2}}  du \ll 1.
\]
In the case $\sigma=0$, we switch the double summation as follows:   
\begin{align*}
\sum_{m=1}^M \frac{1}{m^{1-\sigma}}\sum_{H_2-2d\leq h\leq H_2+d}\mathcal{T}_1= \sum_{H_2(1)-2d \leq h \leq H_2(M)+d  } \sum_{a\leq m\leq b} \frac{1}{m} \int_{X}^{e^{3} X} \frac{ e(f_{m,h}(u))}{u^{1/2}}  du,
\end{align*}
where  $a= d e^{(h-\theta)/d} Xe^{-1} $ and  $ b=\min ( d e^{(h-\theta)/d} Xe^2,M)$. Remark that $\log (b/a)\leq 3$. We see that 
\begin{gather*}
a\geq 1 \iff d e^{(h-\theta)/d} Xe^{-1}\geq 1 \iff h \geq d \log ( 1/(d X)) +\theta+d = H_2(1)+d
\end{gather*}
and
\begin{gather*}
d e^{(h-\theta)/d} Xe^2\leq M \iff h \leq d \log ( M/(d X))+\theta -2d = H_2(M)-2d.
\end{gather*}
By Lemma~\ref{Lemma:e-zeta-interval}, we have
\begin{align*}
    &\sum_{H_2(1)+d\leq h\leq H_2(M)+d} \sum_{a\leq m\leq b} \frac{1}{m} \int_X^{e^3 X} \frac{e(f_{m,h}(u))}{u^{1/2}} du\\
    &= \left( \sum_{H_2(1)+d\leq h\leq 0} +\sum_{0< h\leq H_2(M)+d} \right) \int_X^{e^3X} u^{-1/2} \sum_{a\leq m\leq b} \frac{e(f_{m,h}(u))}{m} du\\
    &\ll\sum_{H_2(1)+d\leq h\leq 0} \int_X^{e^3 X} u^{-1/2} (\log (b/a) +1) du +\sum_{0< h\leq H_2(M)+d} \int_X^{e^3X} (u^{1/2}a^{-1}+u^{-3/2}) du\\
    &\ll \sum_{H_2(1)+d\leq h\leq 0} X^{1/2} +\sum_{0< h\leq H_2(M)+d} (X^{1/2} e^{-h/d} +X^{-1/2})\\
    &\ll X^{1/2} |H_2(1)| +X^{1/2} +X^{-1/2}\log M \ll X^{1/2}\log X +X^{-1/2}\log N.
\end{align*}
If $H_2(1)-2d\leq h< H_2(1)+d$, then $a,b\ll 1$, which implies that 
\[
\sum_{H_2(1)-2d < h\leq H_2(1)+d}  \sum_{a\leq m\leq b} \frac{1}{m} \int_{X}^{e^{3} X} \frac{ e(f_{m,h}(u))}{u^{1/2}}  du\ll \sum_{H_2(1)-2d < h\leq H_2(1)+d}  \sum_{a\leq m\leq b} \frac{X^{1/2}}{m}\ll X^{1/2} .
\]
Combining the above estimates, we conclude Lemma~\ref{Lemma:H2}.
\end{proof}

\begin{lemma}\label{Lemma:H1-f1} We have
\[
\sum_{m=1}^M \frac{1}{m^{1-\sigma}}  \sum_{-H\leq h\leq H_1-2d} \int_{X}^N \frac{ e(f_{m,h}(u))}{u^{1/2}}  du \ll 
\begin{cases}
    X^{-1/2}(\log N)(\log\log N)  & \text{ if } \sigma=0;\\
    X^{-1/2}(\log\log N)  &  \text{ if } \sigma<0.
\end{cases}
\]
\end{lemma}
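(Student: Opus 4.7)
The plan is to mirror the argument of Lemma~\ref{lemma:From_H2+d_to_H}, with the threshold $H_1-2d$ now playing the role that $H_2+d$ played there. For any integer $h$ with $h\leq H_1-2d$, the stationary point $\xi_{m,h}=(m/d)e^{(\theta-h)/d}$ satisfies $\xi_{m,h}\geq e^{2}N$, so it lies well to the right of the integration interval $[X,N]$. In particular, every $u\in[X,N]$ satisfies $u\leq N\leq e^{-2}\xi_{m,h}<\xi_{m,h}$, so by (G1) and (G2) the function $G(u)=u^{1/2}f'_{m,h}(u)$ is positive and monotonically increasing on $[X,N]$. This gives the uniform lower bound
\[
|G(u)|\geq G(X)=X^{1/2}(H_2-h).
\]

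To apply Lemma~\ref{lemma:exponential-int-first} I also need $u^{-1/2}/f'_{m,h}(u)$ to be monotonic on $[X,N]$. A direct computation using $f''_{m,h}(u)=-d/u$ shows that the derivative of this ratio carries the sign of $d-f'_{m,h}(u)/2$. Since $f'_{m,h}$ is decreasing in $u$ with minimum $f'_{m,h}(N)=H_1-h\geq 2d$ (this is precisely why the offset $-2d$ appears in the statement), we have $f'_{m,h}(u)\geq 2d$ throughout $[X,N]$, and the ratio is monotonically decreasing. Lemma~\ref{lemma:exponential-int-first} with $g(u)\coloneqq u^{-1/2}$ and $f(u)\coloneqq f_{m,h}(u)$ therefore gives
\[
\int_{X}^{N}\frac{e(f_{m,h}(u))}{u^{1/2}}\,du\ll \frac{1}{X^{1/2}(H_2-h)}.
\]

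Next I would sum this estimate over the integers $h\in[-H,H_1-2d]$. Substituting $k=H_2-h$ turns the sum into a harmonic sum over integers $k$ with $k\geq H_2-H_1+2d=d\log(N/X)+2d$ and $k\leq H_2+H$, so it is bounded by
\[
\frac{1}{X^{1/2}}\log\frac{H_2+H}{\max(1,d\log(N/X)+2d)}\ll \frac{\log\log N}{X^{1/2}},
\]
where I used $H\ll \log^{1+\alpha}(MN)$ and $|H_2|\ll \log(MN)$, together with the fact that the numerator is poly-logarithmic and the denominator is at least a positive constant. Finally, the outer sum $\sum_{m\leq M}m^{\sigma-1}$ contributes a factor $\log M\asymp\log N$ when $\sigma=0$ and a bounded factor when $\sigma<0$, producing the two cases in the stated bound.

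The only subtlety is the monotonicity check of $u^{-1/2}/f'_{m,h}(u)$, whose proof depends crucially on the uniform bound $f'_{m,h}(u)\geq 2d$; this is what forces the shift by $2d$ in the cut-off. Everything else is essentially bookkeeping that parallels the proof of Lemma~\ref{lemma:From_H2+d_to_H}.
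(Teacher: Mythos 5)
Your proof is correct and follows essentially the same route as the paper's: you show $\xi_{m,h}\ge e^2N$ for $h\le H_1-2d$, deduce from (G1) and (G2) that $G$ is positive and increasing on $[X,N]$, apply Lemma~\ref{lemma:exponential-int-first} with the lower bound $G(X)=X^{1/2}(H_2-h)$, and then sum the harmonic series over $h$ and the outer series over $m$. The explicit monotonicity computation you add for $u^{-1/2}/f'_{m,h}(u)$ is correct but redundant: once $G(u)=u^{1/2}f'_{m,h}(u)$ is known to be monotonic and of constant sign on $[X,N]$, its reciprocal $g/f'$ is automatically monotonic, which is how the paper implicitly handles this hypothesis of Lemma~\ref{lemma:exponential-int-first} (your observation that $G'>0$ is equivalent to $f'_{m,h}>2d$ is precisely property (G2) restated).
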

\begin{proof} Take any integer $h$ with $-H\leq h\leq H_1-2d$. In this case, $\xi_{m,h}\geq e^{2} N$ holds. Thus, we have 
\begin{equation}\label{Inequality:small-h} 
  \xi_{m,h}> \xi_{m,h} e^{-2}   \geq  N.   
\end{equation}
By (G1) and (G2), the function $G(u)$ is positive and monotonically increasing on  $[X, N]$, and hence
\[
|G(u)|= G(u) \geq G(X)= X^{1/2} \left( d \log\left( \frac{m}{d X}\right)+\theta-h \right)= X^{1/2} (H_2-h).
\]
Therefore, by Lemma~\ref{lemma:exponential-int-first} with $g(u)\coloneqq u^{-1/2}$, $f(u)\coloneqq f_{m,h}(u)$, $a\coloneqq X$, and $b\coloneqq N$, 
\begin{align*}
&\sum_{m=1}^M \frac{1}{m^{1-\sigma}}   \sum_{-H\leq h\leq H_1-2d} \int_{X}^N \frac{ e(f_{m,h}(u))}{u^{1/2}}  du \\
&\ll \sum_{m=1}^M \frac{1}{m^{1-\sigma}} \sum_{-H\leq h\leq H_1-2d}  \frac{1}{X^{1/2} (H_2-h)}\\
&\ll 
\begin{cases}
    X^{-1/2}(\log N)(\log\log N)  & \text{ if } \sigma=0;\\
    X^{-1/2}(\log\log N)  &  \text{ if } \sigma<0.
\end{cases} 
\end{align*}
\end{proof}

\begin{lemma}\label{Lemma:H1-f2} Suppose that $N^{-1/2} \leq V$. Then we have
\[
    \sum_{m=1}^M \frac{1}{m^{1-\sigma}}  \sum_{H_1-2d< h\leq H_1'} \int_{X}^{N} \frac{ e(f_{m,h}(u))}{u^{1/2}}  du \ll  1+\ichi(\sigma=0) \frac{\log N}{N^{1/2} V }.
\]
\end{lemma}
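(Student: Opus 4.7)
The plan is to bound the integral $\int_{X}^{N}e(f_{m,h}(u))/u^{1/2}\,du$ via the first-derivative test on suitable subintervals, and then to control the resulting double sum by swapping its order and exploiting an algebraic simplification of $H_{2}(m)-h$.

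First I would observe that for $h\in(H_{1}(m)-2d,H_{1}'(m)]$, the stationary point $\xi_{m,h}=(m/d)e^{(\theta-h)/d}$ lies in $((1+V)N,e^{2}N]$, so on $[X,N]$ the phase derivative $f'_{m,h}(u)$ is positive and decreasing, with minimum $f'_{m,h}(N)=H_{1}(m)-h\geq d\log(1+V)\asymp V$. By the properties (G1) and (G2), the function $G(u)=u^{1/2}f'_{m,h}(u)$ is monotonic on each side of $u=e^{-2}\xi_{m,h}$. I would split the integral at this point (whenever it lies in $[X,N]$) and apply Lemma~\ref{lemma:exponential-int-first} with $g(u)=u^{-1/2}$ on each piece, noting that $1/G$ is monotonic there. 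This yields
\[
\int_{X}^{N}\frac{e(f_{m,h}(u))}{u^{1/2}}\,du\ll\frac{1}{N^{1/2}(H_{1}(m)-h)}+\frac{1}{X^{1/2}(H_{2}(m)-h)}.
\]

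For the first term, the $O(1)$ integer values of $h$ per $m$ satisfy $H_{1}(m)-h\geq d\log(1+V)\asymp V$, and consecutive values differ by $1$, so $\sum_{h}1/(H_{1}(m)-h)\ll 1/V$. Summing over $m$ with weight $m^{\sigma-1}$ gives $\log N/(N^{1/2}V)$ when $\sigma=0$ and at most $1/(N^{1/2}V)\leq 1$ when $\sigma<0$ (using $V\geq N^{-1/2}$), which matches the claimed bound.

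The hard part will be the second term, where the apparent factor $X^{-1/2}$ must be compensated by the $h$-sum. I would swap the order of summation and make the substitution $m=L(h)e^{y}$ with $L(h)=dNe^{(h-\theta)/d}$; a direct computation reveals that $H_{2}(m)-h$ simplifies to $d(\log(N/X)+y)$, which is independent of $h$. An Euler--Maclaurin comparison then yields $\sum_{m}1/(m^{1-\sigma}(H_{2}(m)-h))\ll L(h)^{\sigma}/(d(\log(N/X)+V))$. Summing over the $O(\log N)$ valid $h$'s when $\sigma=0$, or using the geometric decay $\sum_{h}L(h)^{\sigma}=O_{\sigma}(1)$ when $\sigma<0$, and splitting into the two regimes $X\leq Ne^{-V}$ (where $\log(N/X)$ dominates the denominator) and $X>Ne^{-V}$ (where $V$ does) shows that the auxiliary piece contributes $O(1+\ichi(\sigma=0)\log N/(N^{1/2}V))$. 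The hypothesis $V\geq N^{-1/2}$ is precisely what bridges the two regimes.
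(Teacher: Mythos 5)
Your argument is correct and follows the paper's proof closely. Both split the integral at $u=e^{-2}\xi_{m,h}$ and apply Lemma~\ref{lemma:exponential-int-first} to each piece, obtaining $\int_X^N u^{-1/2}e(f_{m,h}(u))\,du \ll N^{-1/2}(H_1(m)-h)^{-1}+X^{-1/2}(H_2(m)-h)^{-1}$; your treatment of the piece near $u=N$ via $H_1(m)-h\geq d\log(1+V)\asymp V$ and the subsequent summation over $m$ is essentially identical to the paper's. Where you diverge is the piece near $u=X$: the paper disposes of it in one line by noting $H_2(m)-h\geq H_2(m)-H_1'(m)=d\log\bigl((1+V)N/X\bigr)\gg\log N$, so that each such term is $\ll (X^{1/2}\log N)^{-1}$ and $\sum_{m}m^{\sigma-1}\cdot O(1)\cdot(X^{1/2}\log N)^{-1}\ll 1$ follows at once. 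Your swap of the summation order, the substitution $m=L(h)e^{y}$, the identity $H_2(m)-h=d(\log(N/X)+y)$, and the Euler--Maclaurin estimate all land on the same quantity by a longer route; the added care does make the dependence on $\log(N/X)$ (rather than $\log N$) explicit, which only matters in the regime where $X$ is close to $N$ --- a regime the paper implicitly excludes, since in the end it takes $X\asymp(\log N)^{4/3}(\log\log N)^{-20/9}$, for which $\log(N/X)\asymp\log N$. Both versions yield the stated bound.
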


\begin{proof}
Take any integer $h$ with $H_1-2d< h\leq H_1'$. We note that $H_1'-H_1=d\log(1+V)\asymp V$. Further, we see that 
\begin{align}\label{equivalent:h-xi}
  H_1-2d< h< H_1' &\iff N(1+V)< \xi_{m,h}< Ne^2, 
\end{align} 
and hence $e^{-2}\xi_{m,h}<N$. We decompose the integration as follows:
\[
\int_{X}^{N} \frac{ e(f_{m,h}(u))}{u^{1/2}}  du= \int_{X}^{e^{-2}\xi_{m,h}} + \int_{e^{-2}\xi_{m,h}}^N  \eqqcolon \mathcal{T}_1 +\mathcal{T}_2.
\]
For $\mathcal{T}_1$, by (G1) and (G2), the function $G(u)$ is positive and monotonically increasing on $[X, e^{-2}\xi_{m,h} ]$, which leads to 
\begin{align*}
|G(u)|&= G(u) \geq G(X)=  X^{1/2} \left(d\log \left(\frac{m}{dX}\right) + \theta-h\right)=X^{1/2}(H_2-h)\gg X^{1/2}\log N.
\end{align*}
Therefore, by Lemma~\ref{lemma:exponential-int-first} with $g(u)\coloneqq u^{-1/2}$, $f(u)\coloneqq f_{m,h}(u)$, $a\coloneqq X$, and $b\coloneqq e^{-2}\xi_{m,h}$, 
\begin{align*}
\sum_{m=1}^M \frac{1}{m^{1-\sigma}}  \sum_{H_1-2d< h\leq H_1'} \mathcal{T}_1 \ll \sum_{m=1}^M \frac{1}{m^{1-\sigma}} \sum_{H_1-2d\leq h\leq H_1'} \frac{1}{X^{1/2} \log N } \ll 1.    
\end{align*}
For  $\mathcal{T}_2$, by (G1), (G2), and \eqref{equivalent:h-xi}, the function $G(u)$ is positive and monotonically decreasing on $[e^{-2}\xi_{m,h}, N ]$, and hence for every integer $m\in[1,M]$ and $h\in [H_1-2d, H_1']$, we obtain  
\begin{align*}
|G(u)|&= G(u) \geq G(N)=  N^{1/2} \left(d\log \left(\frac{m}{dN}\right) + \theta-h\right)\\
&= N^{1/2} \left(d\log \left(\frac{m}{d(1+V)N}\right) + d\log (1+V) + \theta-h\right)\\
&=N^{1/2}(H_1'+d\log(1+V)-h) \gg N^{1/2} V.
\end{align*}
Therefore, by Lemma~\ref{lemma:exponential-int-first} with $g(u)\coloneqq u^{-1/2}$, $f(u)\coloneqq f_{m,h}(u)$, $a\coloneqq e^{-2}\xi_{m,h}$, and $b\coloneqq N$, 
\begin{align*}
\sum_{m=1}^M \frac{1}{m^{1-\sigma}}  \sum_{H_1-2d< h\leq H_1'} \mathcal{T}_2
&\ll \sum_{m=1}^M \frac{1}{m^{1-\sigma}} \sum_{H_1-2d\leq h\leq H_1'} \frac{1}{N^{1/2} V } \\
&\ll \begin{cases}
N^{-1/2}V^{-1} \log N & \text{if $\sigma=0$;} \\
1 & \text{if $\sigma<0$,}
\end{cases}
\end{align*}
where we should choose $V$ as $N^{-1/2}\leq V$. 

\end{proof}

\begin{lemma}\label{Lemma:Vaaler}
Let $J>0$ be an integer. There exist $a_j\in \mathbb{C}$ $(1\leq |j|\leq J)$ with $a_j \ll 1$ such that we have
\[
\psi^{*} (x) = - \sum_{1\leq |j|\leq J} a_j \frac{e(jx) }{2\pi i j}  
\]
and 
\[
|\psi(x) -\psi^{*}(x)  |\leq \frac{1}{2J+2} \sum_{|j|\leq J} \left(1-\frac{|j|}{J+1}\right) e(jx).  
\]
\end{lemma}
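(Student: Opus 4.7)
The plan is to construct $\psi^{\ast}$ via the Beurling--Selberg extremal function machinery and to derive the stated majorant by periodizing. Let $B(z)$ denote Beurling's entire function of exponential type $2\pi$: it majorizes $\operatorname{sgn}(x)$ on $\mathbb{R}$, satisfies $\int_{\mathbb{R}}(B(x) - \operatorname{sgn}(x))\,dx = 1$, and has Fourier transform $\widehat{B}$ supported in $[-1,1]$. Decompose $B$ into its odd and even parts $V(x) = \tfrac{1}{2}(B(x)-B(-x))$ and $M(x) = \tfrac{1}{2}(B(x)+B(-x))$. Then $V$ is a real band-limited approximation to $\tfrac{1}{2}\operatorname{sgn}(x)$, and a routine consequence of the majorization property of $B$ is the pointwise bound $|V(x) - \tfrac{1}{2}\operatorname{sgn}(x)| \leq M(x) - \tfrac{1}{2}$ on $\mathbb{R}\setminus\{0\}$, where the right-hand side is nonnegative and again of exponential type $2\pi$.

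First I would rescale: set $V_{J}(x) = V((J+1)x)$ and $U_{J}(x) = M((J+1)x) - \tfrac{1}{2}$, whose Fourier transforms vanish outside $[-(J+1), J+1]$. Second I would periodize via the Poisson summation formula. Define
\[
\psi^{\ast}(x) = -\sum_{n\in\mathbb{Z}} V_{J}(x+n),
\]
which, since $\widehat{V_{J}}$ is supported in $[-(J+1), J+1]$ and odd (so that it vanishes at $0$ and at $\pm(J+1)$), reduces to a finite trigonometric polynomial $-\sum_{1\leq|j|\leq J}\widehat{V_{J}}(j)\,e(jx)$. Because $V$ is real and odd with normalization inherited from $\widehat{B}(0)=1$, one may write $\widehat{V_{J}}(j) = a_{j}/(2\pi i j)$ with $|a_{j}|\ll 1$, giving the required expression for $\psi^{\ast}$.

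Third I would bound the error by periodizing the majorant. The inequality $|V_{J}(x) - \tfrac{1}{2}\operatorname{sgn}(x)| \leq U_{J}(x)$ together with the fact that $\psi(x)$ is the $1$-periodization of $-V(x)$-type data yields
\[
|\psi(x) - \psi^{\ast}(x)| \leq \sum_{n\in\mathbb{Z}} U_{J}(x+n),
\]
and a second application of Poisson summation turns the right-hand side into a trigonometric polynomial of degree at most $J$. Vaaler's explicit computation of $\widehat{M}$ at the lattice points $j/(J+1)$ identifies this polynomial as precisely the normalized Fej\'er kernel $\tfrac{1}{2J+2}\sum_{|j|\leq J}(1 - |j|/(J+1))\,e(jx)$, which is the claimed bound.

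The main obstacle is the construction of $B$ and the verification of its extremal $L^{1}$-property and Fourier support; these are classical but nontrivial facts proved in Vaaler, \emph{Some extremal functions in Fourier analysis}, Bull.\ Amer.\ Math.\ Soc.\ \textbf{12} (1985), 183--216. In practice the present lemma is an immediate specialization of Vaaler's results there, and I would prove it simply by citing that reference.
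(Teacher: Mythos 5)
Your proposal is correct and ends up in essentially the same place as the paper: the paper's proof is a single citation to Graham--Kolesnik, Theorem~A.6, which is precisely Vaaler's theorem, and you likewise conclude by deferring to Vaaler's 1985 BAMS article. The Beurling--Selberg sketch you give is a faithful (if compressed) account of what lies behind that citation, though the intermediate majorant should be the Fej\'er kernel $\tfrac12 K(x)$ rather than $M(x)-\tfrac12$; since you do not rely on this detail but on the cited theorem, it does not affect the validity of the proof.
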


\begin{proof}
See \cite[Theorem A.6]{Graham-Kolesnik}.
\end{proof}

\begin{lemma}\label{Lemma:From_H1'_to_H1''} We have
\[
    \sum_{m=1}^M \frac{1}{m^{1-\sigma}}  \sum_{H_1'< h\leq H_1''} \int_{X}^{N} \frac{ e(f_{m,h}(u))}{u^{1/2}}  du \ll  \begin{cases}
    V \log N +(\log \log N)^2 & \text{if $\sigma=0$;} \\
    1 & \text{if $\sigma<0$.}   
    \end{cases}
\]
\end{lemma}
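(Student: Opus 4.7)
The plan is to swap the order of summation, taking the sum over integer $h$ first and then over $m$ in the corresponding range. For fixed $h$, the condition $H_1'(m) < h \leq H_1''(m)$ is equivalent to $m \in [(1-V)\mu(h), (1+V)\mu(h))$, where $\mu(h) := dN e^{(h-\theta)/d}$; this is an interval of length $\asymp V\mu(h)$, and the effective range of $h$ (for which this interval meets $[1,M]$) has size $\asymp \log N$. For each admissible pair $(m,h)$ the stationary point $\xi_{m,h} = (m/d) e^{(\theta-h)/d}$ lies in $((1-V)N, (1+V)N)$, very close to the right endpoint $N$ of the integration range.

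For each such pair I would estimate $\int_X^N e(f_{m,h}(u))/u^{1/2}\, du$ by splitting at $\xi_{m,h}$ (or a nearby dyadic point) and applying Lemma~\ref{lemma:second-derivative} on the part near the stationary point (using $|f_{m,h}''(u)| \asymp d/u$ together with $|g(u)| \ll N^{-1/2}$ to get a bound $\ll 1$) and Lemma~\ref{lemma:exponential-int-first} on the complementary subinterval where $f_{m,h}'(u)$ is bounded away from zero. Each such integral is therefore $O(1)$. Summing over $m \in I(h) := [(1-V)\mu(h), (1+V)\mu(h)) \cap [1,M]$, the harmonic sum $\sum_{m \in I(h)} 1/m^{1-\sigma}$ is $\asymp V$ when $\sigma = 0$ and $\mu(h) \geq 1$, and it converges absolutely when $\sigma < 0$. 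Summing further over the $\asymp \log N$ effective values of $h$ produces the main term $V \log N$ in the $\sigma = 0$ case and $O(1)$ uniformly in the $\sigma < 0$ case.

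The main obstacle, and the source of the $(\log\log N)^2$ error in the $\sigma = 0$ case, is twofold. First, when $\mu(h)$ is close to $1$ the interval $I(h)$ contains only $O(1)$ integers, so the heuristic $\sum 1/m \asymp V$ over $I(h)$ breaks down; second, pairs $(m,h)$ with $\xi_{m,h}$ extremely close to the endpoint $N$ produce an exponential-integral boundary term of size $\gg (N^{1/2}V)^{-1}$, which is too large to absorb into the main term when $V$ is small. To handle both situations simultaneously I would invoke Vaaler's approximation (Lemma~\ref{Lemma:Vaaler}) to replace the sharp indicator of $h \in (H_1', H_1'']$ by a trigonometric polynomial of degree $J \ll \log N$, thereby extracting additional cancellation in $h$ across different $m$. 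Combined with Lemma~\ref{Lemma:e-zeta-interval} applied to the shortened $m$-sums that emerge from Vaaler's expansion, this should yield the required $O((\log\log N)^2)$ bound on the edge contribution and complete the proof.
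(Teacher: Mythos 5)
Your broad strategy --- show each integral $\int_X^N e(f_{m,h}(u))u^{-1/2}\,du$ is $O(1)$ and then control the weighted count of pairs $(m,h)$ --- agrees with the paper, but the counting step and the perceived obstacles differ. Summing over $h$ first is a genuinely different, in fact simpler, route: with $I(h)=[(1-V)\mu(h),(1+V)\mu(h))$ the inner harmonic sum is $\log\frac{1+V}{1-V}+O(1/\mu(h))$ when $\sigma=0$, and since the $\mu(h)$ grow geometrically in $h$ the error terms sum to $O(1)$; this already gives $V\log N+O(1)$ with no appeal to Vaaler's theorem at all. The paper does not swap: it keeps $m$ as the outer variable, bounds the inner $h$-count by $\lfloor H_1''\rfloor-\lfloor H_1'\rfloor=(H_1''-H_1')-(\psi(H_1'')-\psi(H_1'))$, and controls $\sum_m(\psi(H_1'')-\psi(H_1'))/m$ via Lemma~\ref{Lemma:Vaaler} applied to $\psi$ together with the exponential-sum bound $\sum_m e(jd\log m)/m\ll\log^{2/3}(|j|+1)$ from Lemma~\ref{Lemma:e-zeta}, yielding the slightly weaker $(\log\log N)^2$.

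The two obstacles you flag, however, are not the real ones. The ``boundary term of size $\gg(N^{1/2}V)^{-1}$'' never appears: if the subinterval on which you invoke the second-derivative test (Lemma~\ref{lemma:second-derivative}) contains the stationary point and extends all the way to $u=N$ --- the paper takes $[e^{-2}\xi_{m,h},N]$ --- the result is a uniform $O(1)$ per pair with no boundary contribution, however near $\xi_{m,h}$ is to $N$. Such a term would only arise from invoking the Huxley stationary-phase formula (Lemma~\ref{Lemma:Huxley}), or from applying Lemma~\ref{lemma:exponential-int-first} right across the critical point, neither of which is needed here. Likewise the small-$\mu(h)$ contribution is simply $O(1)$: each such $I(h)$ contains $O(1)$ integers $m$ and the weights $1/m\ll 1/\max(1,\mu(h))$ again sum geometrically. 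Your closing plan --- Vaaler applied to the sharp indicator of $h\in(H_1',H_1'']$ combined with Lemma~\ref{Lemma:e-zeta-interval} --- is neither the step the paper takes (the paper's Vaaler step is on $\psi$ as a function of $m$, and the relevant exponential-sum tool is Lemma~\ref{Lemma:e-zeta}; Lemma~\ref{Lemma:e-zeta-interval} bounds an oscillatory $m$-sum at a fixed $u$, not a harmonic count) nor a step your own swap requires, since the direct count above already closes the estimate.
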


\begin{proof}
Take any integer $h$ with $H_1'< h\leq H_1''$. Then it follows that 
\begin{align*}
  H_1'< h\leq H_1'' &\iff N(1-V)\leq \xi_{m,h}< N(1+V). 
  \end{align*} 
We decompose the integral as 
\begin{align*}
\int_{X}^N \frac{ e(f_{m,h}(u))}{u^{1/2}}  du = \int_{X}^{e^{-2}\xi_{m,h}} \frac{ e(f_{m,h}(u))}{u^{1/2}}  du+\int_{e^{-2} \xi_{m,h}}^{N} \frac{ e(f_{m,h}(u))}{u^{1/2}}  du \eqqcolon \mathcal{T}_1+\mathcal{T}_2.
\end{align*}

For every $m\in \mathbb{Z}_{[1,M]}$ and $h\in \mathbb{Z}_{(H_1',H_1'']}$, the function $G(u)$ is positive and monotonically increasing on $[X, e^{-2}\xi_{m,h}]$, and hence 
\begin{align*}
|G(u)|&= G(X) \geq X^{1/2}(H_2-h)  \gg X^{1/2}\log N .
\end{align*}
Therefore, Lemma~\ref{lemma:exponential-int-first} with $g(u)\coloneqq u^{-1/2}$, $f(u)\coloneqq f_{m,h}(u)$, $a\coloneqq X$, and $b\coloneqq e^{-2}\xi_{m,h} $ yields that 
\begin{align*}
&\sum_{m=1}^M \frac{1}{m^{1-\sigma}}  \sum_{H_1'< h\leq H_1''} \int_{X}^{N} \frac{ e(f_{m,h}(u))}{u^{1/2}}  du \ll \sum_{m=1}^M \frac{1}{m^{1-\sigma}} \sum_{H_1' < h\leq H_1''} \frac{1}{X^{1/2}\log N } \ll 1.    
\end{align*}
For evaluating upper bounds for $\mathcal{T}_2$, we have $f''_{m,h}(u)=-d/u \asymp N^{-1}$ on $u\in [e^{-2}\xi_{m,h} ,N]$ since $\xi_{m,h}\asymp N$. Therefore, by Lemma~\ref{lemma:second-derivative}, we obtain 
\begin{align*}
\sum_{m=1}^M \frac{1}{m^{1-\sigma}}\sum_{H_1'<h\leq H_1''}\mathcal{T}_2&\ll \sum_{m=1}^M \frac{1}{m^{1-\sigma}}\sum_{H_1'< h\leq H_1''} N^{-1/2} N^{1/2}\leq \sum_{m=1}^M \frac{\lfloor H_1''\rfloor -\lfloor H_1'\rfloor }{m^{1-\sigma}}\\
&= \sum_{m=1}^M \frac{H_1''-H_1'}{m^{1-\sigma}} -  \sum_{m=1}^M \frac{\psi(H_1'')  -\psi(H_1')}{m^{1-\sigma}} \eqqcolon \mathcal{T}_{21}+\mathcal{T}_{22}, 
\end{align*}
say. In the case $\sigma<0$, we have $\mathcal{T}_{21}\ll 1$ and $\mathcal{T}_{22}\ll 1$. In the case $\sigma=0$, by the definitions of $H_1'$ and $H_1''$, we obtain 
\[
H_1''-H_1'= d\log\left(\frac{1+V}{1-V} \right) \asymp V.
\]
Thus, we have $\mathcal{T}_{21}\ll V \log M$. 

Let $\tilde{H}= H_1''$ or $H_1'$. Let $J$ be a positive integer with $J\leq N$ determined later. By Lemma~\ref{Lemma:Vaaler}, we see that 
\[
\sum_{m=1}^M \frac{\psi(\tilde{H})}{m} = \sum_{m=1}^M \frac{\psi^*(\tilde{H}) }{m}  + O\left(\left|\sum_{m=1}^M\frac{1}{m}\cdot  \frac{1}{J} \sum_{|j|\leq J} \left(1- \frac{|j|}{J+1} \right) e(j \tilde{H} )  \right|\right).
\]
Applying Lemma~\ref{Lemma:e-zeta}, the error term is upper-bounded by
\begin{align*}
\frac{1}{J} \sum_{|j|\leq J} \left|\sum_{m=1}^M \frac{e(j \tilde{H} )}{m}  \right|
&=\frac{1}{J} \sum_{|j|\leq J} \left|\sum_{m=1}^M \frac{e(j d\log m )}{m}  \right|\\
&\ll \frac{1}{J}\left( \log M +\sum_{1\leq |j| \leq J} \log^{2/3}(J+1) \right) \\
&\ll  \frac{\log M}{J} +\log(J+1).
\end{align*}
Similarly, the absolute value of the first sum is upper-bounded by
\begin{align*}
\sum_{1\leq |j|\leq J} \frac{1}{j} \left|\sum_{m=1}^M \frac{e(j d \log m )}{m} \right| \ll \sum_{1\leq |j|\leq J} \frac{\log (j+1)}{j}\ll \log^2 (J+1).
\end{align*}
Therefore, by choosing $J = \lfloor \log M \rfloor$, we obtain $\mathcal{T}_{22}\ll (\log\log M)^2$. This completes the proof of the lemma.
\end{proof}

\section{Applying the stationary phase method}\label{section:Applying_the_stationary_phase_method}
Let $N, M\in \mathbb{Z}_{\ge 1}$ with $C_1N\leq M \leq C_2 N$, where $C_1$ and $C_2$ are constants satisfy $C_2>C_1>1$. Let $V\geq N^{-1/2}$. Recall that $X=X(\sigma)$ is a parameter of integers with $3\leq X\leq \min (M,N)$. By combining \eqref{eq:zeta-restrictH}, Lemmas~\ref{lemma:From_H2+d_to_H}, \ref{Lemma:H2}, \ref{Lemma:H1-f1},  \ref{Lemma:H1-f2}, and \ref{Lemma:From_H1'_to_H1''}, we obtain 
\begin{align*}
&\sum_{1\leq n\leq N} \zeta(\sigma+idn) \frac{e(\theta n)}{n^{1-\sigma}} \\
&=   D \sum_{m=1}^M \frac{1}{m^{1-\sigma}}   \sum_{H_1''\leq h<H_2-2d } \int_{X}^N \frac{e(f_{m,h}(u))}{u^{1/2}} du  \\
&\quad+ O(X^{1/2} \log \log N) \\
&\quad+ O\left( \ichi(\sigma=0) \left(  \log^{1-\alpha}N+ X^{1/2} (\log X)^{2/3} \log \log N+ \frac{\log \log N\log N  }{X^{1/2} }\right) \right) \\
&\quad+O\left(\ichi(\sigma=0) \left( X^{1/2}\log X + (\log\log N)^2 +V\log N + \frac{\log N}{N^{1/2}V }\right) \right ).
\end{align*}

The aim of this section is to prove the following lemma.

\begin{lemma}\label{lemma:main-term} We have 
\begin{align*}
&\sum_{m=1}^M  \frac{1}{m^{1-\sigma}} \sum_{H_1''\leq h<H_2-2d} \int_{X}^N \frac{e(f_{m,h}(u) )}{u^{1/2}} du \\
&=   \frac{e(-1/8)}{d^{1/2}}  \sum_{m=1}^M \frac{1}{m^{1-\sigma}} \sum_{H_1'' \leq h<H_2-2d} e(m e^{(\theta-h)/d}) \\
&\quad+ O\left( (1 + \ichi(\sigma=0) \log N) \left( \frac{\log\log N}{X^{1/2}}+ \frac{1}{X^{1/2}V}  +  \frac{1}{X^{3/2}V^{3}}\right) \right).
\end{align*}
\end{lemma}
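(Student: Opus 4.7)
The plan is to apply a classical stationary phase formula to each inner integral
$$
I_{m,h}:=\int_X^N \frac{e(f_{m,h}(u))}{u^{1/2}}\,du.
$$
For $(m,h)$ in the restricted range $H_1''\le h<H_2-2d$, the stationary point $\xi=\xi_{m,h}=(m/d)e^{(\theta-h)/d}$ satisfies $e^2 X<\xi\le(1-V)N$, so it lies well inside $[X,N]$ at distance $\gg\xi$ from $X$ and $\gg VN$ from $N$. A direct calculation using $f_{m,h}(u)=du\log(em/(du))+(\theta-h)u$ yields
$$
f_{m,h}(\xi)=d\xi=me^{(\theta-h)/d},\qquad f_{m,h}''(\xi)=-\frac{d}{\xi},\qquad f_{m,h}'''(u)=\frac{d}{u^2}.
$$
Hence $|f_{m,h}''(\xi)|^{1/2}=(d/\xi)^{1/2}$, and the predicted leading contribution at $\xi$ equals
$$
\frac{\xi^{-1/2}}{(d/\xi)^{1/2}}\,e(f_{m,h}(\xi))\,e(-1/8)=\frac{e(-1/8)}{d^{1/2}}\,e\!\bigl(me^{(\theta-h)/d}\bigr),
$$
matching the main term of the lemma exactly.

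I would next invoke a precise stationary phase formula (for instance \cite[Lemma~4.6]{Titchmarsh}, or the explicit version used in \cite{Huxley}) to write $I_{m,h}$ as this leading term plus an error $E_{m,h}$ controlled by an endpoint contribution at $X$, an endpoint contribution at $N$, and a remainder from the stationary-point expansion. The endpoint pieces are essentially $g(X)/|f'_{m,h}(X)|$ and $g(N)/|f'_{m,h}(N)|$ with $g(u)=u^{-1/2}$; in our range $f'_{m,h}(X)=H_2-h\ge 2d$ while $|f'_{m,h}(N)|=h-H_1\ge H_1''-H_1\asymp V$, so these give $X^{-1/2}/(H_2-h)$ and $N^{-1/2}/(h-H_1)$ respectively. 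The stationary-point remainder, controlled by $|f''(\xi)|^{-3/2}$ weighted by higher derivatives together with $|f'''|\ll u^{-2}$, contributes a term whose dominant part after summation produces the $(X^{3/2}V^{3})^{-1}$ shape.

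Summation against $m^{-1+\sigma}$ is then bookkeeping. The first endpoint term gives
$$
\sum_{m\le M}\frac{1}{m^{1-\sigma}}\sum_{H_1''\le h<H_2-2d}\frac{1}{X^{1/2}(H_2-h)}\ll\bigl(1+\ichi(\sigma=0)\log N\bigr)\frac{\log\log N}{X^{1/2}},
$$
since the inner $h$-sum telescopes to $\ll\log(H_2-H_1'')\ll\log\log N$. For the $N$-endpoint and stationary-point errors I would swap the order of summation, as in the proof of Lemma~\ref{Lemma:H2}, so that for fixed $h$ the $m$-range is an interval on which $m\asymp dNe^{(h-\theta)/d}$ and $\xi_{m,h}\asymp N$; applying the interval estimates of Lemma~\ref{Lemma:e-zeta-interval} type together with the lower bound $m\gg dXe^{(h-\theta)/d}$ forced by $h<H_2-2d$ converts the $N^{-1/2}$ factor into $X^{-1/2}$ and produces the claimed errors $(X^{1/2}V)^{-1}$ and $(X^{3/2}V^3)^{-1}$.

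The main obstacle is choosing a stationary phase formula whose explicit error is simultaneously sharp enough at the $N$-endpoint (where $\xi_{m,h}$ can approach $(1-V)N$, which is why the safety margin $V$ appears in the denominator of two of the error terms) and structured enough to yield all three stated error shapes after the double summation. Once this formula is fixed, no new analytic input beyond the tools developed in Sections~\ref{section:Applying_the_functional_equation}--\ref{section:Restricting_the_ranges_of_summation_and_integration_II} is required: the remaining work consists of the uniform bookkeeping described above, together with the elementary identities for $f_{m,h}$, $f_{m,h}''$, and $\xi_{m,h}$ recorded in the first paragraph.
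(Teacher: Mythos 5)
Your computation of the stationary point $\xi_{m,h}$, of $f_{m,h}(\xi)=me^{(\theta-h)/d}$, of $f_{m,h}''(\xi)=-d/\xi$, and of the resulting leading term $\frac{e(-1/8)}{d^{1/2}}e(me^{(\theta-h)/d})$ is exactly right, and your analysis of the $X$-endpoint contribution $X^{-1/2}/(H_2-h)$, after summing $\sum_h(H_2-h)^{-1}\ll\log\log N$, does reproduce the first error term. However, the central step --- ``invoke a precise stationary phase formula'' on the full interval $[X,N]$ --- is left as a placeholder, and in fact neither of the formulas you cite can be applied there. The obstruction is that $f_{m,h}''(u)=-d/u$ varies by a factor of $N/X\to\infty$ over $[X,N]$, whereas both Titchmarsh's Lemma~4.6 and Huxley's theorem require $|f''|$ to be comparable to a single scale throughout the interval: Huxley's hypotheses $f''(u)\le -T_1/(B^2L_1^2)$ and $|f''(u)|\le B^2T_1/L_1^2$ with $L_1\ge b-a\asymp N$ force $B^4\ge N/X$, which is impossible for an absolute constant $B$. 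Your sketch of the stationary-point remainder (``controlled by $|f''(\xi)|^{-3/2}$ weighted by higher derivatives'') does not correspond to the error term of any off-the-shelf result, so the proposal never actually produces a concrete bound to sum.

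The paper circumvents this by decomposing $\int_X^N=\int_X^{e^{-2}\xi}+\int_{e^{-2}\xi}^{(1-U)\xi}+\int_{(1-U)\xi}^{(1+U)\xi}+\int_{(1+U)\xi}^N$ (with $U=V$). The outer three pieces are handled by the first-derivative test (Lemma~\ref{lemma:exponential-int-first}) using the monotonicity of $G(u)=u^{1/2}f'_{m,h}(u)$; the middle short interval, where $f''(u)\asymp\xi^{-1}$ is essentially constant, is where Huxley's stationary-phase theorem (Lemma~\ref{Lemma:Huxley}) is applied with parameters $T_1=L_1=L_2=\xi$, $T_2=\xi^{-1/2}$. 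It is this decomposition, not a clever choice of global formula, that produces the three error shapes you wrote down: $\mathcal{I}_1$ gives $X^{-1/2}(H_2-h)^{-1}$, $\mathcal{I}_2$ and $\mathcal{I}_4$ together with the Huxley endpoint terms give $\xi^{-1/2}V^{-1}\ll X^{-1/2}V^{-1}$, and the Huxley remainder gives $\xi^{-3/2}V^{-3}\ll X^{-3/2}V^{-3}$. Your proposed $N$-endpoint bound $N^{-1/2}(h-H_1)^{-1}$ is not what actually arises (it is far smaller than the true $\xi^{-1/2}V^{-1}$ contribution from $\mathcal{I}_4$, whose lower bound $\xi\gg X$ is what creates the $X^{-1/2}V^{-1}$ factor), and the subsequent bookkeeping you describe --- a reorder-and-$\mathrm{Lemma}~\ref{Lemma:e-zeta-interval}$ step as in Lemma~\ref{Lemma:H2}, with ``$\xi_{m,h}\asymp N$'' --- is not correct here, since for a fixed $h$ the parameter $\xi_{m,h}$ ranges over all of $(Xe^2,(1-V)N]$ as $m$ varies, and the paper's proof does not use Lemma~\ref{Lemma:e-zeta-interval} at this stage at all. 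So the proposal correctly diagnoses what the answer should look like but omits the decomposition that makes the estimate accessible.
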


\begin{lemma}[Stationary phase integral]\label{Lemma:Huxley}
Let $f(u)$ be a real function, four times continuously differentiable and let $g(u)$ be a real function, three times continuously differentiable on the interval $[a,b]$. Let $T_1$, $T_2$, $L_1$, $L_2$, $B$ be positive parameters with $L_1\geq b-a$. Suppose that for all $x\in [a,b]$ we have
\[
f''(u) \leq  -T_1/ (B^2L_1^2)
\]
and
\[
|f^{(r)} (u)| \leq B^r T_1/L_1^r ,\quad |g^{(s)}(u)| \leq B^s T_2/L_2^s
\]
for $r=2,3,4$ and $s=0,1,2$. Assume that $f'(u)$ changes sign from positive to negative at $u=c$ with $a<c<b$. If $T_1$ is sufficiently large in terms of $B$, then we have  
\begin{align*}
\int_{a}^b g(u) e (f(u)) du &= \frac{g(c)e(f(c) -1/8)}{\sqrt{|f''(c)| } } + \frac{g(b) e(f(b))}{ 2\pi i f'(b)} -\frac{g(a) e(f(a))}{ 2\pi i f'(a)} \\
&+O\left( \frac{B^4 L_1^4 T_2 }{T_1^2 } ((c-a)^{-3} +(b-c)^{-3} )  \right) \\
&+O\left( \frac{B^{13} L_1 T_2 }{T_1^{3/2} } \left(1 + \frac{L_1}{ B^4 L_2} \right)^2  \right)  .
\end{align*}
\end{lemma}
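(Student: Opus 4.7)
The plan is to apply the stationary phase integral Lemma~\ref{Lemma:Huxley} to each integral $\int_X^N u^{-1/2}e(f_{m,h}(u))\,du$ appearing in the double sum, taking $f=f_{m,h}$, $g(u)=u^{-1/2}$, $a=X$, $b=N$, and stationary point $c=\xi_{m,h}=(m/d)e^{(\theta-h)/d}$. The restriction of the summation range to $h\in[H_1'',H_2-2d)$ was engineered precisely so that $\xi_{m,h}$ lies in the interior of $[X,N]$ and is well separated from both endpoints: $h<H_2-2d$ gives $\xi_{m,h}>e^2 X$ so $\xi_{m,h}-X\gg X$, while $h\geq H_1''$ gives $\xi_{m,h}\leq(1-V)N$ so $N-\xi_{m,h}\gg VN$. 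These separations are exactly what is required to control the factor $(c-a)^{-3}+(b-c)^{-3}$ in Huxley's first error.

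To extract the main term, I would use $f'_{m,h}(\xi)=0$ together with the identity $f_{m,h}(u)=uf'_{m,h}(u)+du$ (observed earlier in the paper), which yields $f_{m,h}(\xi_{m,h})=d\xi_{m,h}=m\,e^{(\theta-h)/d}$, and $f''_{m,h}(\xi_{m,h})=-d/\xi_{m,h}$. Hence
\[
\frac{g(\xi_{m,h})\,e(f_{m,h}(\xi_{m,h})-1/8)}{|f''_{m,h}(\xi_{m,h})|^{1/2}}=\frac{\xi_{m,h}^{-1/2}(\xi_{m,h}/d)^{1/2}}{1}\,e(m\,e^{(\theta-h)/d}-1/8)=\frac{e(-1/8)}{d^{1/2}}\,e(m\,e^{(\theta-h)/d}),
\]
which, after summing over $m$ and $h$, reproduces the declared main term. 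To verify the Huxley hypotheses I would take $L_1=N$, $L_2=X$, $T_1\asymp dN$, $T_2=X^{-1/2}$, and $B$ a sufficiently large constant, using the exact formulas $|f^{(r)}_{m,h}(u)|=(r-2)!\,d/u^{r-1}$ for $r\geq 2$ and $|g^{(s)}(u)|\ll u^{-1/2-s}$ valid on $[X,N]$.

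The residual bookkeeping splits into the two boundary contributions and the two Huxley error terms. At $u=X$ the boundary is $X^{-1/2}e(f_{m,h}(X))/(2\pi i f'_{m,h}(X))$; using $|f'_{m,h}(X)|=|H_2-h|\geq 2d$ together with the harmonic estimate $\sum_{h}1/|H_2-h|\ll\log\log N$ (as $H_2-h$ traverses integer shifts in $[2d,O(\log N)]$) and $\sum_m 1/m^{1-\sigma}\ll 1+\ichi(\sigma=0)\log N$ gives the first error term $(1+\ichi(\sigma=0)\log N)\log\log N/X^{1/2}$. At $u=N$ the boundary is bounded crudely by $N^{-1/2}/|f'_{m,h}(N)|\leq X^{-1/2}/V$ (using $|f'_{m,h}(N)|=|h-H_1|\gg V$), which after summation produces $(1+\ichi(\sigma=0)\log N)/(X^{1/2}V)$. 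The dominant Huxley error $B^4L_1^4T_2T_1^{-2}((c-a)^{-3}+(b-c)^{-3})$ is controlled by its $(N-\xi)^{-3}\ll (VN)^{-3}$ piece and yields the $1/(X^{3/2}V^{3})$ term after summation; the remaining Huxley error is checked to be of strictly smaller order. The main obstacle I anticipate is the careful calibration of $B$ so that the Huxley derivative hypotheses hold uniformly on $[X,N]$ (where $|f^{(r)}|$ is largest near $u=X$, well above the normalised value $T_1/L_1^r$) without the resulting powers of $B$ inflating the claimed error; a related subtlety is that the boundary at $u=N$ admits no oscillation cancellation in $h$ when $N\in\mathbb Z$, since then $e(-hN)=1$, forcing the crude $V^{-1}$ bound rather than a finer harmonic argument.
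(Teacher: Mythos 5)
Your proposal does not prove the statement at all: it \emph{uses} it. The statement in question is Lemma~\ref{Lemma:Huxley} itself, the stationary phase expansion for $\int_a^b g(u)e(f(u))\,du$ under Huxley's derivative hypotheses. What you have written is a plan for applying that lemma to the integrals $\int_X^N u^{-1/2}e(f_{m,h}(u))\,du$ with $c=\xi_{m,h}$, i.e.\ essentially the content of the proof of Lemma~\ref{lemma:main-term} in Section~\ref{section:Applying_the_stationary_phase_method}, not a proof of the lemma you were asked about. A proof of Lemma~\ref{Lemma:Huxley} would have to carry out the stationary phase analysis itself: isolate a neighbourhood of the stationary point $c$, expand $f$ to second order there and evaluate the resulting Fresnel/Gaussian-type oscillatory integral to produce the main term $g(c)e(f(c)-1/8)/\sqrt{|f''(c)|}$, then integrate by parts (or use first/second derivative estimates) on the complementary ranges to generate the endpoint terms $g(b)e(f(b))/(2\pi i f'(b))$, $-g(a)e(f(a))/(2\pi i f'(a))$ and the two explicit error terms with their stated dependence on $B$, $T_1$, $T_2$, $L_1$, $L_2$, $(c-a)^{-3}+(b-c)^{-3}$. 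None of this appears in your write-up; you simply quote the conclusion. The paper itself does not prove this lemma either — it is cited verbatim from Huxley \cite[Theorem~2]{Huxley} — so the expected answer was either that citation or a genuine reproduction of Huxley's argument.

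A secondary remark on the application you do sketch: your parameter choice $L_1=N$, $T_1\asymp dN$, $L_2=X$ on the whole interval $[X,N]$ does not satisfy the hypothesis $|f^{(r)}(u)|\leq B^rT_1/L_1^r$ with a bounded $B$, since near $u=X$ one has $|f''(u)|\asymp 1/X$ while $T_1/L_1^2\asymp 1/N$; this is exactly why the paper first peels off the ranges $[X,e^{-2}\xi]$, $[e^{-2}\xi,(1-U)\xi]$, $[(1+U)\xi,N]$ by Lemma~\ref{lemma:exponential-int-first} and applies Lemma~\ref{Lemma:Huxley} only on $[(1-U)\xi,(1+U)\xi]$ with $T_1=L_1=L_2=\xi$, $T_2=\xi^{-1/2}$ and $B$ an absolute constant. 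You flag this difficulty yourself, but flagging it does not resolve it, and in any case it concerns the use of the lemma rather than its proof.
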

\begin{proof}
    See \cite[Theorem 2]{Huxley}.
\end{proof}

\begin{proof}[Proof of Lemma~\ref{lemma:main-term}] 
Let $U\in (0,1/2]$ be a positive parameter satisfying $(1+U)(1-V)\leq 1$. We observe that 
\begin{align*}
H_1''\leq  h < H_2-2d &\iff Xe^{2} < \xi_{m,h} \leq N(1-V).    
\end{align*}
Let $m\in \mathbb{Z}_{[1,M]}$ and $h\in \mathbb{Z}_{[H_1'', H_2-2d)}$, and set $\xi=\xi_{m,h}$. We decompose the integral as 
\begin{align*}
&\int_{X}^N \frac{e(f_{m,h}(u) )}{u^{1/2}} du \\
&= \left(\int_{X}^{e^{-2}\xi} + \int_{e^{-2}\xi }^{ (1-U)\xi } + \int_{(1-U)\xi} ^{(1+U)\xi} +\int_{(1+U)\xi}^N \right) \frac{e(f_{m,h}(u) )}{u^{1/2}} du \\
&\eqqcolon \mathcal{I}_1+\mathcal{I}_2+\mathcal{I}_3+\mathcal{I}_4,
\end{align*}
say. By combining $(1+U)(1-V)\leq 1$ and $\xi \leq N(1-V)$, we see that $(1+U)\xi\leq N$. 

For $\mathcal{I}_1$, by (G1) and (G2), the function $G(u)$ is positive and monotonically increasing on $[X, e^{-2}\xi]$, we have 
\[
|G(u)| \geq G(X) = X^{1/2} \left(H_2-h \right)
\]
for every $u\in[X,e^{-2}\xi]$. Therefore, Lemma~\ref{lemma:exponential-int-first} with $g(u)\coloneqq u^{-1/2}$, $f(u)\coloneqq f_{m,h}(u)$, $a\coloneqq X$, and $b\coloneqq e^{-2}\xi $  implies that
\[
\mathcal{I}_1 \ll X^{-1/2}(H_2-h)^{-1} . 
\]
For $\mathcal{I}_2$, by (G1) and (G2) the function $G(u)$ is positive and monotonically decreasing on $[e^{-2}\xi, (1-U)\xi]$, and hence if $u$ belongs to the interval, then 
\begin{align*}
|G(u)| =G(u)\geq G((1-U)\xi )\gg \xi^{1/2}\left( d \log \left(\frac{m}{d(1-U)\xi  } \right) + \theta -h \right)\gg U\xi^{1/2}
\end{align*}
since $\xi=(m/d)e^{(\theta-h)/d}$. Therefore, Lemma~\ref{lemma:exponential-int-first}  with $g(u)\coloneqq u^{-1/2}$, $f(u)\coloneqq f_{m,h}(u)$, $a\coloneqq e^{-2}\xi$, and $b\coloneqq (1-U)\xi$ implies that
\[
\mathcal{I}_2 \ll \xi^{-1/2} U^{-1}. 
\]
For $\mathcal{I}_4$, the function $G(u)$ is negative and monotonically decreasing on $[(1+U)\xi, N]$. Thus, each $u\in [(1+U)\xi, N]$ satisfies 
\[
|G(u)|= -G(u)\geq -G((1+U)\xi)= \xi^{1/2}  \left(h-\theta - d \log \left(\frac{m}{d (1+U) \xi } \right) \right)\gg \xi^{1/2}U . 
\]
Therefore, Lemma~\ref{lemma:exponential-int-first} with $g(u)\coloneqq u^{-1/2}$, $f(u)\coloneqq f_{m,h}(u)$, $a\coloneqq (1+U)\xi$, and $b\coloneqq N$ implies that
\[
\mathcal{I}_4 \ll  \xi^{-1/2} U^{-1}. 
\]
For $\mathcal{I}_3$, we shall apply the stationary phase integral. Let $g(u)=u^{-1/2}$. It follows that $f^{(r)}_{m,h}=(-1)^{r-1} (r-2)! d u^{-r+1}$ for every $r=2,3,4$. Therefore, for all $u\in [(1-U)\xi, (1+U) \xi ]$, we obtain 
\[
|f^{(2)}_{m,h}(u) |= \frac{d}{u} \asymp \frac{1}{\xi}, \quad |f^{(r)}_{m,h}(u) |\asymp \frac{1}{\xi^{r-1} }. 
\]
Thus, we may choose $T_1= \xi_{m,h}$ and $L_1=\xi$. Note that $L_1 \geq (1+U)\xi - (1-U)\xi$ holds since $U\in (0,1/2]$. Further, each $u\in [(1-U)\xi, (1+U) \xi ]$ satisfies
\[
|g(u)| \asymp \xi^{-1/2},\quad |g'(u)| \asymp \xi^{-3/2},  \quad |g''(u)|\asymp \xi^{-5/2}, 
\]
and hence, we may take $T_2=\xi^{-1/2}$ , $L_2=\xi$, and $B>0$ as a sufficiently large absolute constant. Let $c= \xi$. Since $X\ll \xi=T_1$ and $X$ can be taken sufficiently large in terms of absolute constants, applying Lemma~\ref{Lemma:Huxley} with $a=(1-U)\xi$ and $b=(1+U)\xi$, we get
\begin{align*}
\mathcal{I}_3&= \frac{g(c)e(f(c) -1/8)}{\sqrt{|f''(c)| } } + \frac{g(b) e(f(b))}{ 2\pi i f'(b)} -\frac{g(a) e(f(a))}{ 2\pi i f'(a)} \\
&+O\left( \frac{B^4 L_1^4 T_2 }{T_1^2 } ((c-a)^{-3} +(b-c)^{-3} )  \right) +O\left( \frac{B^{13} L_1 T_2 }{T_1^{3/2} } \left(1 + \frac{L_1}{ B^4 L_2} \right)^2  \right).  
\end{align*}
The first term is equal to 
\[
\frac{1}{\xi^{1/2}} e(m e^{(\theta-h)/d} -1/8) \frac{\xi^{1/2}}{d^{1/2}} =\frac{e(-1/8)}{d^{1/2} }  e(me^{(\theta-h)/d}). 
\]
The second and third terms are upper bounded by 
\[
\ll \frac{1}{\xi^{1/2}} \left( \frac{1}{|f'((1+U)\xi ) |} +\frac{1}{|f'((1-U)\xi ) |}  \right) \ll \xi^{-1/2} U^{-1}.  
\]
The error terms are also bounded by
\[
\ll \frac{\xi^4 \xi^{-1/2} }{\xi^2} \frac{1}{\xi^3 U^3} + \frac{\xi \xi^{-1/2}}{\xi^{3/2} } \left(1+\frac{\xi}{\xi}\right)^2 \ll \frac{1}{\xi^{3/2}U^3} + \frac{1}{\xi}.   
\]
Therefore, we obtain 
\[
\mathcal{I}_1+\mathcal{I}_2+\mathcal{I}_3+\mathcal{I}_4=\frac{e(-1/8)}{d^{1/2}} e(m e^{(\theta-h)/d}) +E(m,h), 
\]
where
\[
E(m,h)\ll X^{-1/2} (H_2-h)^{-1} +\frac{1}{\xi^{1/2}U}+\frac{1}{\xi^{3/2} U^{3}}+\frac{1}{\xi}.
\]
Therefore, by choosing $U=V$, we have $(1+U)(1-V)\leq 1$ and  
\begin{align*}
&\sum_{m=1}^M \frac{1}{m^{1-\sigma}} \sum_{H_1'' \leq h< H_2-2d}  E(m,h)\\
&\ll \sum_{m=1}^M  \frac{1}{m^{1-\sigma}} \sum_{ \substack{H_1''\leq h<H_2-2d  }}
 \left(\frac{1}{X^{1/2}(H_2-h)} + \frac{e^{(h-\theta)/(2d)}}{m^{1/2}V} + \frac{e^{3(h-\theta)/(2d)}}{m^{3/2}V^3}+\frac{e^{(h-\theta)/d}}{m} \right)\\
&\ll \left(1 + \ichi(\sigma=0) \log N  \right)\left(\frac{\log\log N}{X^{1/2}}+\frac{1}{X^{1/2}V}  +  \frac{1}{X^{3/2}V^{3}} + \frac{1}{X}\right).
\end{align*}
\end{proof}

At last, we optimize the error terms. By combining the above discussion, we obtain 
\begin{align*}
&\sum_{1\leq n\leq N} \zeta(2\pi idn) \frac{e(\theta n)}{n^{1-\sigma}} 
\\
&= d^{-\sigma}\sum_{m=1}^M \frac{1}{m^{1-\sigma}}   \sum_{H_1''<h<H_2-2d }  e(me^{(\theta-h)/d})  \\
&\quad+ O\left(X^{1/2} \log \log N +\frac{1}{X^{1/2}V}  +  \frac{1}{X^{3/2}V^{3}} + \frac{1}{X}  \right) \\
&\quad+ O\left( \ichi(\sigma=0) \left(  \log^{1-\alpha}N+ X^{1/2} (\log X)^{2/3} \log \log N+ \frac{\log N\log \log N  }{X^{1/2} }\right) \right) \\
&\quad+O\left(\ichi(\sigma=0) \left(  X^{1/2}\log X   + (\log\log N)^2 +V\log N + \frac{\log N}{N^{1/2}V }\right) \right )\\
&\quad+O \left( \ichi(\sigma=0)  \left(\frac{\log N\log \log N}{X^{1/2}}+\frac{\log N}{X^{1/2}V}  +  \frac{\log N}{X^{3/2}V^{3}}\right) \right)
\end{align*}
In the case $\sigma<0$, we choose $X$ as a sufficiently large absolute constant and $V$ as a sufficiently small absolute constant. Then we have 
\[
 X^{1/2} \log \log N +\frac{1}{X^{1/2}V}  +  \frac{1}{X^{3/2}V^{3}}  \ll \log \log N.
\]
In the case $\sigma=0$, we now choose 
\[
\alpha=1, \quad X=(\log N)^{\beta_1}(\log\log N)^{\beta_2},\quad V=(\log N)^{-\gamma_1}(\log\log N)^{\gamma_2}
\]
with $\beta_1, \gamma_1>0$ and $\beta_2, \gamma_2\in\mathbb{R}$. Then the errors are 
\begin{align*}
    &\ll X^{1/2}(\log X)^{2/3}\log\log N +V\log N +\frac{\log N}{X^{1/2}V} + \frac{\log N}{X^{3/2}V^3}.
\end{align*}
Letting $L=\log N$, the above is
\begin{align}
    &=L^{\beta_1/2} (\log L)^{5/3+\beta_2/2} +L^{1-\gamma_1} (\log L)^{\gamma_2} \notag\\
    &+L^{1-\beta_1/2+\gamma_1} (\log L)^{-\beta_2/2-\gamma_2}+L^{1-3\beta_1/2+3\gamma_1} (\log L)^{-3\beta_2/2-3\gamma_2}. \label{equation:the_error_terms-1}
\end{align}
We first determine $\beta_1$ and $\gamma_1$ such that
\[
f(\beta_1, \gamma_1) := \max( \beta_1/2, 1-\gamma_1, 1-\beta_1/2+\gamma_1, 1-3\beta_1/2+3\gamma_1)
\]
is minimized. By solving $1-\gamma_1=1-\beta_1/2+\gamma_1$ and $\beta_1/2=1-\gamma_1$, we get $(\beta_1, \gamma_1)=(4/3, 1/3)$, and hence 
\[
\min_{0<\beta_1, \gamma_1} f(\beta_1, \gamma_1) \leq f(4/3, 1/3) =2/3.
\]
Assume that $\min_{0<\beta_1, \gamma_1} f(\beta_1, \gamma_1)<2/3$. Since $\beta_1<4/3$ and $\gamma_1>1/3$, the value $1-\beta_1/2+\gamma_1$ exceeds $2/3$. However, this is a contradiction. Thus, we have $\min_{0<\beta_1, \gamma_1} f(\beta_1, \gamma_1) = 2/3$.
Applying $\beta_1=4/3$ and $\gamma_1=1/3$, the terms of \eqref{equation:the_error_terms-1} satisfy
\begin{align}
    &= L^{2/3} (\log L)^{5/3+\beta_2/2} +L^{2/3} (\log L)^{\gamma_2} +L^{2/3} (\log L)^{-\beta_2/2-\gamma_2}  +(\log L)^{-3\beta_2/2-3\gamma_2}  \notag \\
    &\ll L^{2/3} ((\log L)^{5/3+\beta_2/2} +(\log L)^{\gamma_2} +(\log L)^{-\beta_2/2-\gamma_2} ). \label{equation:the_error_terms-2}
\end{align}
We next determine $\beta_2$ and $\gamma_2$ such that
\[
g(\beta_2, \gamma_2) := \max( 5/3+\beta_2/2, \gamma_2, -\beta_2/2-\gamma_2)
\]
is minimized. By solving $5/3+\beta_2/2=\gamma_2=-\beta_2/2-\gamma_2$, we have $(\beta_2, \gamma_2)=(-20/9, 5/9)$. Similarly, $g$ is minimized as $5/9$ at $(-20/9, 5/9)$. Then the terms of \eqref{equation:the_error_terms-2} are
\[
\ll L^{2/3} (\log L)^{5/9}=(\log N)^{2/3}(\log\log N)^{5/9}.
\]
Therefore, letting $X=(\log N)^{4/3}(\log\log N)^{-20/9}$ and $V=(\log N)^{-1/3}(\log\log N)^{5/9}$, we have
\begin{align*}
    \sum_{1\leq n\leq N} \zeta(2\pi idn) \frac{e(\theta n)}{n^{1-\sigma}}
    &= d^{-\sigma}\sum_{m=1}^M \frac{1}{m^{1-\sigma}}   \sum_{H_1''<h<H_2-2d }  e(me^{(\theta-h)/d}) \\
    &+\begin{cases}
        O((\log N)^{2/3}(\log\log N)^{5/9})  &  \text{ if } \sigma=0;\\
        O(\log\log N)  &  \text{ if } \sigma<0.
    \end{cases}
\end{align*}

\section{Proof of Theorems~\ref{Theorem-Bernoulli0} and \ref{Theorem-Bernoulli1}}\label{section:Proof_of_Theorems_2.1_and_2.2}
Let $R_\sigma(N)=\log \log N + \ichi(\sigma=0) (\log N)^{2/3} (\log \log N)^{5/9}$. Let $k$ be a non-negative integer. We have 
\begin{align*}
&\sum_{1\leq |n|\leq N} \zeta(-k +idn) \frac{e(\theta n)}{n^{k+1}}\\
&= \sum_{1\leq n\leq N} \zeta(-k +idn) \frac{e(\theta n)}{n^{k+1}}  +  (-1)^{k+1}\overline{\sum_{1\leq n\leq N} \zeta(-k +idn) \frac{e(\theta n)}{n^{k+1}}}\\
&=d^k\sum_{1\leq m\leq M} \frac{1}{m^{k+1}} \sum_{H_1''(m)<h<H_2(m)-2d }  e(m e^{(\theta-h)/d} )\\
&\quad + d^k\sum_{1\leq m\leq M} \frac{1}{(-m)^{k+1}} \sum_{H_1''(m)<h<H_2(m)-2d }  e(-m e^{(\theta-h)/d} )  +O(R_{\sigma}(N) ) \\
&= d^k \sum_{1\leq |m|\leq M} \frac{1}{m^{k+1}}   \sum_{H_1''(|m|)<h<H_2(|m|)-2d }  e(m e^{(\theta-h)/d} )+O (R_\sigma(N)  ).
\end{align*}

We observe that 
\begin{align*}
H_1''(|m|)< h < H_2(|m|)-2d &\iff X e^{2} < \xi_{|m|,h} < N(1-V) \\
&\iff d e^{(h-\theta)/d}X e^{2} < |m| < d e^{(h-\theta)/d}N(1-V).
\end{align*}
Let $A= d e^{(h-\theta)/d}X e^{2}$ and $B=d e^{(h-\theta)/d}N(1-V)$.  By switching the double summations, we have 
\begin{align*}
&\sum_{1\leq |n|\leq N} \zeta(2\pi idn) \frac{e(\theta n)}{n^{k+1}} \\
&= d^k  \sum_{H''_1(1)<h<H_2(M)-2d} \sum_{\substack{ 1\leq |m|\leq M \\  A < |m| < B } } \frac{e(me^{(\theta-h)/d})  }{m^{k+1}}   +O(R_\sigma(N) ).
\end{align*}
Further, we observe that 
\begin{align} 
A\geq 1 &\iff e^{(h-\theta)/d}\geq \frac{1}{dXe^2} \nonumber\\ 
&\iff h \geq d \log \left(\frac{1}{dX}\right) +\theta -2d=H_2(1)-2d, \label{eq:equivalentA}\\ 
B \geq M &\iff e^{(h-\theta)/d} \geq \frac{M}{dN(1-V)} \nonumber\\ 
&\iff h\geq d\log \left(\frac{M}{dN(1-V)}\right) +\theta=H''_1(M). \label{eq:equivalentB}
\end{align}
By Lemma \ref{Lemma:saw-tooth-function}, for every $a,b>0$ and $x\in \mathbb{R}$, we have
\begin{equation}\label{eq:sin<<1}
\left|\sum_{a\leq |m|\leq b}  \frac{e(mx) }{m} \right|  \ll \left|\sum_{1\leq n< a}  \frac{\sin(2\pi n x) }{\pi x} \right| + \left|\sum_{1\leq n\leq  b}\frac{\sin(2\pi n x) }{\pi x} \right|  \ll1. 
\end{equation}
By $M \asymp N$ and $0<V\leq 1/4 $, there exists a constant $\lambda\geq 1$ such that 
\[
H''_1(M) \leq d\lambda + \theta.
\] 
Further, we see that
 \begin{equation}\label{eq:restrictH2-2d}
0\leq d\lambda +\theta  - (H_2(1)-2d) \ll \log X\ll \log \log N.  
\end{equation}
Combining \eqref{eq:equivalentA}, \eqref{eq:equivalentB}, \eqref{eq:sin<<1}, and \eqref{eq:restrictH2-2d} implies that
\begin{align*}
&\sum_{H_2(1)-2d\leq h<H_2(M)-2d} \sum_{\substack{ 1\leq |m|\leq M \\  A < |m| < B } } \frac{e(m e^{(\theta-h)/d}) }{m^{k+1}}\\
&= \sum_{d\lambda +\theta \leq h<H_2(M)-2d} \sum_{ A< |m|\leq M   } \frac{e(m e^{(\theta-h)/d}) }{m^{k+1}} + O(\log \log N).
\end{align*}
In the case $k>0$, it follows that 
\begin{align*}
&\sum_{d\lambda +\theta \leq h<H_2(M)-2d} \sum_{ A< |m|\leq M   } \frac{e(m e^{(\theta-h)/d}) }{m^{k+1}}\ll \sum_{d\lambda +\theta \leq h<H_2(M)-2d} X^{-k} e^{(\theta-h)k/d}\ll X^{-k}\ll 1.  
\end{align*}
In the case $k=0$, by Lemma \ref{Lemma:saw-tooth-function}, we observe that
\begin{align*}
&\sum_{d\lambda+ \theta \leq h<H_2(M)-2d} \sum_{ A< |m|\leq M   } \frac{e(m e^{(\theta-h)/d}) }{m} \ll  \sum_{d\lambda +\theta\leq h<H_2(M)-2d} \min\left(1, \frac{1}{(2A+1) \|e^{(\theta-h)/d}\| } \right) .
\end{align*}
By $\lambda\geq 1$, if $d\lambda+\theta \leq  h< H_2 (M)-2d$, then we see that $0<e^{(\theta-h)/d}\leq e^{-\lambda} \leq e^{-1}<1/2$. In addition, $A\geq 1$ by \eqref{eq:equivalentA}. Therefore, we have
\begin{align*}
\sum_{d\lambda +\theta\leq h<H_2(M)-2d} \min\left(1, \frac{1}{(2A+1) \|e^{(\theta-h)/d}\| } \right)&\ll \sum_{\theta+d \leq h<H_2(M)-2d}  \min(1, A^{-1} e^{(h-\theta)/d} ) \\
&\ll X^{-1}\log N \ll R_\sigma(N).  
\end{align*}

Thus, we obtain 
\begin{align*}
&\sum_{1\leq |n|\leq N} \frac{\zeta(2\pi idn)e(\theta n)}{n^{k+1}} = d^k \sum_{H''_1(1)<h<H_2(1)-2d } \sum_{\substack{ 1\leq |m|\leq M \\  A < |m| \leq B } } \frac{e(m e^{(\theta-h)/d}) }{m^{k+1}}   +O(R_\sigma(N) ).
\end{align*}
By \eqref{eq:equivalentA} and \eqref{eq:equivalentB}, if $H''_1(1)<h<H_2(1)-2d$, then we see that $A<1$ and $B\leq M$. Therefore, we have  
\begin{align*}
\sum_{1\leq |n|\leq N} \frac{\zeta(2\pi idn)e(\theta n)}{n^{k+1}} = d^k\sum_{H''_1(1)<h<H_2(1)-2d } \sum_{ 1\leq |m|\leq B   } \frac{e(m e^{(\theta-h)/d}) }{m^{k+1}}   +O(R_\sigma(N) ).
\end{align*}
In the case $k>0$, the Fourier expansion of $\psi_{k+1}(x)$ (recall \eqref{eq:FourierExp}) leads to 
\[
 \sum_{ 1\leq |m|\leq B   } \frac{e(m e^{(\theta-h)/d}) }{m^{k+1}}  = -\frac{(2\pi i)^{k+1}}{(k+1)!}\psi(e^{(\theta-h)/d} )  + O( B^{-k} ). 
\]
By $B=de^{(h-\theta)/d}N(1-V)$, we have 
\begin{align*}
&\sum_{1\leq |n|\leq N} \frac{\zeta(2\pi idn)e(\theta n)}{n^{k+1}} =  -\frac{(2\pi i)^{k+1} d^k}{(k+1)!} \sum_{H''_1(1)<h<H_2(1)-2d } \psi_{k+1} (e^{(\theta-h)/d} )  \\
&+ O \left(\sum_{H''_1(1)<h<H_2(1)-2d } N^{-k} e^{(\theta-h)k/d} \right) +O(R_\sigma(N) ),
\end{align*}
and the errors are upper bounded by $\ll 1 + \log\log N \ll \log\log N$. Moreover, since $H_1''(1)<h<H_2(1)-2d$ if and only if $d\log (dX)-\theta \leq -h \leq d \log (d(1-V)N) -\theta$, by replacing $h$ with $-h$, we have
\[
\sum_{H''_1(1)<h<H_2(1)-2d } \psi_{k+1} (e^{(\theta-h)/d} ) = \sum_{0<h <d\log N }  \psi_{k+1} (e^{(\theta+h)/d} ) + O(\log \log N).
\]
This completes the proof of Theorem~\ref{Theorem-Bernoulli1}. 

In the case $k=0$, it follows that
\begin{align*}
\sum_{1\leq |n|\leq N} \zeta(2\pi idn) \frac{e(\theta n)}{n} = \sum_{H''_1(1)<h<H_2(1)-2d } \sum_{ 1\leq |m|\leq B   } \frac{e(m e^{(\theta-h)/d}) }{m}   +O(R_\sigma(N) ).
\end{align*}

By Lemma \ref{Lemma:saw-tooth-function}, $\sum_{ 1\leq |m|\leq B   } e(m e^{(\theta-h)/d}) /m\ll 1$, and hence 
\[
\sum_{H''_1(1)<h<H_2(1)-2d } \sum_{ 1\leq |m|\leq B   } \frac{e(m e^{(\theta-h)/d}) }{m}  = \sum_{0<h < d\log N}  \sum_{ 1\leq |m|\leq B'   } \frac{e(m e^{(\theta+h)/d}) }{m} +O(\log\log N),
\]
where $B'=de^{-(\theta+h)/d}N(1-V)$. Furthermore, if $de^{-(\theta+h)/d}N-B'\geq1$, then $h$ does not exceed $d\log(dVN)-\theta$. Thus, we have
\begin{align*}
    &\sum_{0<h\leq d\log(dVN)-\theta } \sum_{B' \leq |m|\leq de^{-(\theta+h)/d}N  } \frac{e(m e^{(\theta+h)/d}) }{m} \\
    &\ll \sum_{0<h\leq d\log(dVN)-\theta} B'^{-1}(de^{-(\theta+h)/d}N -B') \ll V\log N\ll R_\sigma(N).
\end{align*}
We evaluate the remainder term as
\begin{align*}
    &\sum_{d\log(dVN)-\theta < h < d\log N} \sum_{B' \leq |m|\leq de^{-(\theta+h)/d}N  } \frac{e(m e^{(\theta+h)/d}) }{m}\\
    &\qquad\ll \sum_{d\log(dVN)-\theta < h < d\log N} 1 \ll \log\log N.
\end{align*}

Therefore, we conclude that
\begin{align*}
\sum_{1\leq |n|\leq N} \zeta(2\pi idn) \frac{e(\theta n)}{n} &= \sum_{0<h<d\log N } \sum_{ 1\leq |m|\leq de^{-(\theta+h)/d} N } \frac{e(m e^{(\theta+h)/d}) }{m}   \\
&\quad +O((\log N)^{2/3} (\log \log N)^{5/9} ).
\end{align*}
This completes the proof of Theorem~\ref{Theorem-Bernoulli0}, where \eqref{eq:thm-Bernoulli0-2} follows from Lemma \ref{Lemma:saw-tooth-function} and $|\sin \pi x| \geq 2 \|x\|$. \\

\section*{Acknowledgment}
The first author was financially supported by JST SPRING, Grant Number JPMJSP2125. The second author was financially supported by JSPS KAKENHI Grant Numbers JP22J00025 and JP22KJ0375. We would like to thank Professor Kohji Matsumoto for useful advice.

\end{document}